\newcommand{\nc}{\newcommand}
\numberwithin{equation}{section}
\newtheorem{thm}{Theorem}[section]
\newtheorem{prop}[thm]{Proposition}
\newtheorem{lem}[thm]{Lemma}
\newtheorem{cor}[thm]{Corollary}
\theoremstyle{remark}
\newtheorem{rem}[thm]{Remark}
\newtheorem{example}[thm]{Example}
\newtheorem{conj}[thm]{{\bf Conjecture}}
\nc{\gl}{\mathfrak{gl}}
\nc{\GL}{\mathfrak{GL}}
\nc{\g}{\mathfrak{g}}
\nc{\gh}{\widehat\g}
\nc{\h}{\mathfrak{h}}
\nc{\la}{\lambda}
\nc{\al}{\alpha }
\nc{\be}{\beta }
\nc{\ve}{\varepsilon }
\nc{\om}{\omega }
\nc{\ta}{\theta}
\nc{\ch}{{\mathop {\rm ch}}}
\nc{\Tr}{{\mathop {\rm Tr}\,}}
\nc{\Id}{{\mathop {\rm Id}}}
\nc{\ad}{{\mathop {\rm ad}}}
\nc{\bra}{\langle}
\nc{\ket}{\rangle}
\nc{\x}{{\bf x}}
\nc{\bm}{{\bf m}}
\nc{\bs}{{\bf s}}
\nc{\ba}{{\bf a}}
\nc{\bb}{{\bf b}}
\nc{\bk}{{\bf k}}
\nc{\bp}{{\bf p}}
\nc{\pa}{\partial}
\nc{\ld}{\ldots}
\nc{\cd}{\cdots}
\nc{\hk}{\hookrightarrow}
\nc{\T}{\otimes}
\nc{\gr}{\mathrm{gr}}
\nc{\ov}{\overline}
\newcommand{\bin}[2]{\genfrac{(}{)}{0pt}{}{#1}{#2}}
\nc{\cO}{\mathcal O}
\nc{\msl}{\mathfrak{sl}}
\nc{\mgl}{\mathfrak{gl}}
\nc{\U}{\mathrm U}
\nc{\V}{\EuScript V}
\nc{\cL}{\mathcal{L}}
\nc{\Res}{\mathrm{Res\ }}
\newcommand{\bC}{{\mathbb C}}
\newcommand{\bQ}{{\mathbb Q}}
\newcommand{\bZ}{{\mathbb Z}}
\newcommand{\bN}{{\mathbb N}}
\newcommand{\fh}{{\mathfrak h}}
\newcommand{\fa}{{\mathfrak a}}
\newcommand{\fg}{{\mathfrak g}}
\newcommand{\fgh}{{\widehat{\mathfrak g}}}
\newcommand{\fb}{{\mathfrak b}}
\newcommand{\fn}{{\mathfrak n}}
\newcommand{\bfs}{{\bf{s}}}
\begin{document}

\title[]
{Nonsymmetric Macdonald polynomials, Demazure modules and PBW filtration}

\author{Evgeny Feigin}
\address{Evgeny Feigin:\newline
Department of Mathematics,\newline
National Research University Higher School of Economics,\newline
Vavilova str. 7, 117312, Moscow, Russia,\newline
{\it and }\newline
Tamm Theory Division, Lebedev Physics Institute
}
\email{evgfeig@gmail.com}
\author{Ievgen Makedonskyi}
\address{Ievgen Makedonskyi:\newline
Department of Mathematics,\newline
National Research University Higher School of Economics,\newline
Vavilova str. 7, 117312, Moscow, Russia
{\it and }\newline
Departments of Mechanics and Mathematics
Kiev Shevchenko National University
Vladimirskaya, 64, Kiev, Ukraine.
}
\email{}

\begin{abstract}
The Cherednik-Orr conjecture expresses the $t\to\infty$ limit of the nonsymmetric Macdonald polynomials
in terms of the PBW twisted characters of the affine level one Demazure modules. We prove this
conjecture in several special cases.
\end{abstract}

\maketitle

\section*{Introduction}
The  Macdonald symmetric functions $P_\la(x,q,t)$ \cite{M1} form a remarkable class of polynomials.
These polynomials depend on the variables $x=(x_1,\dots,x_n)$ and two parameters $q$ and $t$.
The  Macdonald symmetric functions can be specialized to the Hall-Littelwood polynomials ($q=0$),
to Schur polynomials ($q=t=0$) and to Jack symmetric polynomials ($q=t^\alpha$, $t\to 1$).

The polynomials $P_\la(x,q,t)$ have a nonsymmetric version $E_\la(x,q,t)$ (see \cite{Ch1}, \cite{O}, \cite{M2}).
The symmetric functions $P_\la(x,q,t)$ can be reconstructed from $E_\la(x,q,t)$
via certain symmetrization over the Weyl group. The nonsymmetric Macdonald polynomials
have many nice  and interesting properties. In particular, they are known to be
related to the representation theory of the affine Lie algebras (see \cite{S}, \cite{I}).
More precisely, the $t\to 0$ limit $E_\la(x,q,0)$ coincides with the character of the corresponding level one Demazure
module. In the recent papers \cite{CO1}, \cite{CO2}, \cite{OS}, \cite{CF} the $t\to\infty$ limit
of the nonsymmetric Macdonald polynomials was studied. In particular, it was shown
that $E_\la(x,q,\infty)$ are polynomials in $x$ and $q^{-1}$. Moreover, these polynomials have
non-negative coefficients. Thus it is natural to expect a relation with the representation theory.

Let $\fg$ be a simple Lie algebra and $\la$ be an anti-dominant weight.
We denote by $W_\la$ be the corresponding level one Demazure module with the extremal vector $w_\la$.
All Demazure modules are invariant with respect to the energy operator $d$
from the affine Kac-Moody algebra. An important special property of the Demazure
modules with antidominant highest weight is their invariance with respect to 
$\fg=\fg\T 1$. We assume that $dw_\la=0$ and thus the eigenvalues
of $d$ on $W_\la$ are nonnegative. We have the Kac-Moody character
$$\ch_{KM} W_\la=\sum_{k\ge 0} q^k\ch \{v\in W_\la, dv=kv\},$$
wher $\ch$ denotes the usual character with respect to the Cartan subalgebra of $\fg$.
One knows that $\ch_{KM} W_\la=E_\la(x,q,0)$ (see \cite{S}).

Cheredink and Orr conjectured that $E_\la(x,q,\infty)$ coincides with the PBW twisted
character of $W_\la$. To give the precise formulation of the conjecture, recall that the 
Demazure modules $W_\la$ are cyclic modules for the current algebra
$\fg\T\bC[t]$ with $w_\la$ being the cyclic vector. The PBW filtration on the universal enveloping
algebra of the current algebra induces the increasing filtration $F_s$ on the Demazure module. Each
space of this filtration is invariant with respect to the Cartan subalgebra and the associated
graded space $W_\la^{gr}$ is bi-graded by the operators
$d$ and by the PBW-grading operator $D$.
We have the PBW character
$$\ch_{PBW} W_\la=\sum_{k,s\ge 0} q^kp^s\ch \{v\in F_s/F_{s-1}, dv=kv\}.$$
Cherednik and Orr put forward the following conjecture \cite{CO1}, Conjecture 2.7:
\begin{conj}
Assume that $\la$ is an antidominant weight. Then
$$E_\la(x,q^{-1},\infty)=\ch_{PBW} W_\la|_{p=q}.$$
\end{conj}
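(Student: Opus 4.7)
The plan is to prove the identity by evaluating both sides in a common combinatorial language and then exhibiting a weight- and $q$-degree-preserving bijection between them, and to do this in settings where a concrete PBW monomial basis of $W_\la^{gr}$ is available.

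On the Macdonald side, I would begin with the Ram-Yip / Orr-Shimozono formula expressing $E_\la(x,q,t)$ as a sum over (quantum) alcove walks, and then specialize $t\to\infty$. In this limit many of the weight factors degenerate or vanish, and the surviving walks produce a manifestly positive combinatorial sum in the $x_i$ and in $q^{-1}$. After performing the substitution $q\mapsto q^{-1}$ required by the conjecture, this presents $E_\la(x,q^{-1},\infty)$ as a sum over a combinatorial set $\mathcal{A}_\la$ equipped with an explicit weight statistic and an explicit $q$-statistic.

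On the representation-theoretic side, I would use an explicit monomial basis of $W_\la^{gr}$. In type $A$, a Feigin-Fourier-Littelmann-Minkov-style construction parametrizes a basis of $W_\la^{gr}$ by lattice points in a marked polytope, with both the affine grading operator $d$ and the PBW grading operator $D$ readable as explicit linear statistics on those lattice points; specializing $p=q$ collapses the two statistics into a single one. This yields $\ch_{PBW} W_\la|_{p=q}$ as a sum of monomials indexed by a combinatorial set $\mathcal{B}_\la$ carrying its own $q$-statistic. The $\fg$-invariance of $W_\la$ for antidominant $\la$, highlighted in the introduction, is the structural input which forces the PBW grading on $W_\la^{gr}$ to be compatible with the current algebra action sharply enough for $\mathcal{B}_\la$ to admit such a clean description.

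The decisive third step is to construct an explicit bijection $\mathcal{A}_\la \leftrightarrow \mathcal{B}_\la$ that matches weight with weight and $q$-statistic with $q$-statistic. I would carry this out in several special cases: (i) $\fg=\msl_n$ with $\la$ an arbitrary antidominant weight, where both sides admit polytope descriptions; (ii) $\la$ a multiple of a minuscule (co)weight in the other classical types, where the monomial basis simplifies dramatically; and (iii) small-rank diagnostic computations that confirm the general pattern. The principal obstacle is the absence of a uniform PBW monomial basis of $W_\la^{gr}$ outside type $A$, which forces a case-by-case analysis and is the reason only several special cases are accessible. A secondary technical difficulty is controlling the $t\to\infty$ limit of the alcove-walk formula sharply enough to read off a statistic matching the PBW degree: the intermediate $t$-dependence contains cancellations whose disappearance in the limit is the same phenomenon that underlies positivity of $E_\la(x,q,\infty)$, and making this explicit is the technical heart of the matching.
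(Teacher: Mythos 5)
Your proposal has a genuine gap at its center: the representation-theoretic input you rely on does not exist at the level of generality you invoke. You assert that a Feigin--Fourier--Littelmann-style marked-polytope construction parametrizes a monomial basis of $W_\la^{gr}$ for an \emph{arbitrary} antidominant $\la$ in type $A$, with both the energy grading $d$ and the PBW grading $D$ readable as linear statistics on lattice points. The known FFL polytope bases are for the irreducible finite-dimensional modules $V_\la$, not for the level-one Demazure (equivalently Weyl) modules $W_\la$ equipped with the PBW filtration coming from $\U(\fn[t])$; the paper states explicitly that computing $\ch_{PBW}W_\mu^{gr}$ is ``out of reach at the moment even in type $A$.'' This is precisely why the paper restricts to two special families --- $\la^*=m\om_r$ and $\la^*=m_1\om_1+m_2\om_{n-1}$ --- and has to \emph{construct} the PBW bases there by hand (Proposition \ref{rectprop} and Theorem \ref{main}, the latter requiring the auxiliary filtration $G_\bullet$ and the $\msl_2^\theta[t]$-action to control the relation \eqref{zero}). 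Your step (i), claiming all antidominant $\la$ in type $A$, would prove strictly more than the paper's Theorem \ref{introthm}, and nothing in your outline supplies the missing basis. Your ``decisive third step,'' the bijection $\mathcal A_\la\leftrightarrow\mathcal B_\la$ matching weights and $q$-statistics, is likewise only asserted, and it is exactly where all the difficulty lives.

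For comparison, the paper's route differs on both sides. On the Macdonald side it uses the Haglund--Haiman--Loehr filling formula rather than Ram--Yip alcove walks, characterizes the fillings surviving the $t\to\infty$ limit (Proposition \ref{ruleoffilling}), and then works with recurrences for the coefficients $c_{\ba}^{\la}(\bk)$ driven by the Knop--Sahi intertwiner (Proposition \ref{mostgeneralrecurrence}, Lemma \ref{equalityofc}). On the module side, the decisive simplification in the rectangular case is that the PBW character is obtained from the Kac--Moody character by rescaling the variables $x_1,\dots,x_{n+1-r}$ by $p$ (Proposition \ref{rectprop}), which reduces the conjecture to the known identity $\ch_{KM}W_\la=E_\la(x,q,0)$ plus the Knop--Sahi symmetry --- no bijection is needed. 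In the $m_1\om_1+m_2\om_{n-1}$ case both characters are evaluated in closed form as sums of $q$-multinomial coefficients (Proposition \ref{characterfor1n}) and matched through the recurrence. If you want to salvage your plan, you must either restrict to the same special families and actually build the PBW bases, or supply a new construction of a $(d,D)$-bigraded monomial basis of $W_\la^{gr}$ for general $\la$ --- which would be a substantial new result in its own right.
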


Several checks on the level of representations of finite-dimensional algebra were worked out 
in \cite{CF}. The goal of this paper is to prove the conjecture in type $A$ in several cases. 
Namely, we prove the following theorem:
\begin{thm}\label{introthm}
Let $\fg$ be of type $A$. Then the Cherednik-Orr conjecture is true if the dual of $\la$ 
is equal to a multiple of a fundamental weight or 
to a linear combination of the first and the last fundamental weights.
\end{thm}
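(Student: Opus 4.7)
The plan is to prove each of the two cases by producing explicit formulas for both sides of the conjectural equality and matching them term by term. Much of the structure of the argument is dictated by the fact that $\ch_{KM}W_\la=E_\la(x,q,0)$ is already known, which fixes the total $\fg$-weighted character and reduces the problem to balancing one further grading: the PBW degree on the representation side against the $q^{-1}$ statistic on the Macdonald side.

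For the Macdonald side I would begin with the Ram-Yip / Orr-Shimozono alcove-walk expansion of $E_\la(x,q,t)$ and pass to the $t\to\infty$ limit, which annihilates most of the walks and leaves a manifestly positive polynomial in $x$ and $q^{-1}$. In type $A$, when the dual of $\la$ is $m\om_k$ the surviving combinatorial data reduces to column-strict fillings of an $m\times k$ rectangle, and when the dual is $a\om_1+b\om_{n-1}$ one obtains a hook-type object which can be reorganised as a convolution of the two rectangular ingredients at $k=1$ and $k=n-1$. The outcome in both cases should be a closed-form expression with a clear bi-statistic on the combinatorial indexing set.

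For the Demazure/PBW side the approach is to realise $W_\la^{\gr}$ concretely via Feigin-Fourier-Littelmann-type monomial bases in type $A$, extended to the current algebra $\fg\T\bC[t]$ through the fusion-product picture of Chari-Loktev. When the dual of $\la$ is $m\om_k$, the Demazure module $W_\la$ is isomorphic as a current algebra module to the fusion product of $m$ copies of the evaluation module on $V(\om_k)$, and the PBW filtration inherits an explicit monomial basis from which the bigraded character can be read off directly. When the dual of $\la$ is $a\om_1+b\om_{n-1}$, the module admits a compatible filtration into pieces governed by the two extremal fundamental-weight cases, and the PBW character should factorise accordingly. In both cases the final matching amounts to constructing a weight- and grading-preserving bijection between the monomial basis indices and the surviving alcove-walk data on the Macdonald side.

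The main obstacle will be the mixed case where the dual of $\la$ equals $a\om_1+b\om_{n-1}$: the Demazure module is no longer a fusion product of identical factors, so a PBW-adapted monomial basis must be constructed ad hoc, and the compatibility of the PBW degree with the Macdonald $q^{-1}$ statistic must be verified. I expect this to go through by an induction on $a+b$, starting from the rectangular base cases handled by the first part of the argument, together with a short exact sequence or cyclicity argument controlling how the PBW filtration behaves under the associated decomposition of $W_\la$.
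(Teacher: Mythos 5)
Your plan is a reasonable sketch of \emph{what kind} of argument should exist, but it is missing the specific ideas that make the proof go through, and one of its ingredients does not exist off the shelf. For the rectangular case $\la^*=m\om_k$ you propose to build an explicit PBW-compatible monomial basis of the fusion product and read off the bigraded character from it. Such bases are only available in the literature for $k=1$ and $k=n$ (Chari--Loktev); for general $k$ and $m$ constructing one is essentially the hard open problem, and the paper explicitly avoids it. The decisive observation (Proposition \ref{rectprop}) is that when $\la^*=m\om_r$ the module is generated by the $e_{i,j}[k]$ with $i\le n-r+1\le j$, each of which contains the simple root $\al_{n-r+1}$ exactly once, so the PBW degree of a vector is read off from its weight. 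Hence \emph{any} basis is a PBW basis and $\ch_{PBW}W_\la^{gr}$ is obtained from $\ch_{KM}W_\la$ by the substitution $x_i\mapsto px_i$ for $i\le n+1-r$ --- no basis is needed at all. You also do not say how the $t\to\infty$ specialization will be tied to the $t\to 0$ one; in the paper this is done through the Knop--Sahi recurrence $E_{\pi(\la)}=q^{\la_n}\Psi E_\la$ (Lemma \ref{equalityofc}), which converts the coefficients of $E_\la(x,q^{-1},\infty)$ into those of $\ch_{KM}W_\la=E_\la(x,q,0)$ with exactly the power of $q$ predicted by the substitution above. Without some such mechanism your ``weight- and grading-preserving bijection'' is not constructed.

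For the mixed case $\la^*=m_1\om_1+m_2\om_{n-1}$ your proposed induction on $m_1+m_2$ via short exact sequences is speculative: the PBW filtration does not obviously behave well under such decompositions, and you give no candidate for the sequence. The paper instead proves an explicit spanning-and-counting theorem (Theorem \ref{main}): a monomial basis in the $e_{1,k}[s]$, $e_{k,n}[s]$, $e_{1,n}[s]$ subject to the inequalities \eqref{1}, \eqref{n}, \eqref{1,n}, with spanning established by restricting to $\msl_2[t]$-subalgebras, by differential operators $\pa_\al[k]$ induced from the $f_\al[k]$, and by an auxiliary weight filtration making $\msl_2^\theta[t]$ act on the associated graded space. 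Both sides of the conjecture are then written in closed form as sums of products of $q$-multinomial coefficients (Proposition \ref{characterfor1n}) and matched through the recurrences of Proposition \ref{mostgeneralrecurrence}. Finally, note that the paper works from the Haglund--Haiman--Loehr formula rather than alcove walks; that substitution is legitimate in principle, but the descent/arm/inversion analysis of which fillings survive at $t\to\infty$ (Proposition \ref{ruleoffilling}) and the resulting recurrences would have to be redone in the alcove-walk language, which your sketch does not attempt.
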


The paper is organized in the following way. In section 1 we collect main definitions and constructions
about Demazure modules and PBW filtration. We also derive PBW bases for special Demazure modules.
In section 2 we recall the Haglund-Haiman-Loehr formula \cite{HHL} for the nonsymmetric Macdonald polynomials,
derive the explicit combinatorial description of the  $t\to\infty$ limit and study the properties of the 
polynomials $E_\la(x,q^{-1},\infty)$.
Finally, in section 3, we prove Theorem \ref{introthm}.    

\section{Demazure modules and PBW filtration}
\subsection{Demazure modules and PBW filtration}
Let us briefly recall the main ingredients (see \cite{Kac}, \cite{Kum} for more details).

Let $\fg$ be a simple Lie algebra. We fix a Cartan decomposition $\fg=\fn^-\oplus\fb$, $\fb=\fn\oplus\fh$.
Let $\triangle_+$ be the set of positive roots of $\fg$, $n$ be the rank of $\fg$ and let $\al_i\in\triangle_+$, $i=1,\dots,n$
be the set of simple roots. We denote by $\om_i$, $i=1,\dots,n$ the fundamental weights.
Let $P=\bigoplus_{i=1}^n \bZ\om_i$ be the weight lattice and let $P_+=\bigoplus_{i=1}^n \bZ_{\ge 0}\om_i$
be the subset of dominant integral weights. Let $\la\in P_+$; we denote by $V_\la$ the irreducible
$\g$-module with highest weight $\la$.  
For $\al\in\triangle_+$, let $f_\al\in\fn^-$ and $e_\al\in\fn$ be the corresponding
Chevalley generators. 

For a Lie algebra $\fa$ we denote by $\fa[t]=\fa\T\bC[t]$ the corresponding current algebra.
We set $a[k]=a\T t^k\in\fa[t]$, $a\in\fa$, $k\ge 0$.

Let $\fgh=\fg\T\bC[t]\oplus\bC K\oplus\bC d$ be the affine Lie algebra; in particular,
$K$ is central and $[d,a\T t^k]=-ka\T t^k$. The current algebra $\fg[t]$ is naturally a
subalgebra of $\fgh$. We have the Cartan decomposition
\[
\fgh=\widehat{\fn^-}\oplus\widehat{\fh}\oplus\widehat{\fn}.
\]
For example, $\widehat{\fn}=\fg\T t\bC[t]\oplus\fn\T 1$, $\widehat{\fh}=\fh\T 1\oplus\bC K\oplus\bC d$.

Let $L=L_{\la,k}$ be an integrable irreducible highest weight $\fgh$ module with the highest weight
vector $v_{\la,k}$. The element  $K$ acts on $L$ as the scalar $k$ and this scalar is called the level of $L$.
The highest weight of $L_{\la,k}$ is the pair $(\la, k)$, where $\la\in P_+$ and $k$ is the
level of $L_{\la,k}$. We have the condition $(\la,\theta)\le k$, where $\theta$ is the highest root
of $\fg$. 

\begin{rem}
To make the pair $(\la,k)$ into an honest weight of $\gh$ one has to specify the eigenvalue of the energy 
operator $d$ on $v_{\la,k}$. However, this value is not important since the action of $d$ 
on a $\gh$ module can be shifted by an arbitrary scalar. We choose the convenient shift 
depending on a concrete situation.
\end{rem}

Let $W$ be the finite Weyl group of $\fg$ with the longest element $w_0$. For $\la\in P$ 
we denote the dual weight $w_0\la$ by $\la^*$. In particular, if $\la\in P_+$, then
$\la^*$ is the lowest weight of the irreducible $\fg$ module $V_\la$. Let $\widehat W$
be the corresponding affine Weyl group; 
thus, $\widehat W$ is the semi-direct product of $W$ with the root lattice. The finite
Weyl group naturally acts on the space of weights of $\fg$ and $\widehat W$
acts on the space of affine weights. For any integrable weigth $(\la,k)$ and $w\in \widehat W$
the weight space of the weight $w(\la,k)$ is one-dimensional. We fix one vector
in each corresponding space and denote it by $v_{w(\la,k)}$.
The Demazure module $D_w(\la)$ is defined as $D_w(\la)=\U(\widehat{\fn})v_{w(\la,k)}$.
We note that $D_w(\la)$ is not always invariant with respect to the action of $\fg=\fg\T 1$.

In what follows we only consider the level one modules. In this case for any weight $\mu\in P$
there exists unique integrable weight $(\la,1)$ and $w\in \widehat W$ such that $w(\la,1)=(\mu,1)$.
If $\mu$ is antidominant, i.e. $w_0\mu\in P_+$, then the Demazure module $D_w(\la)$ is $\fg\T 1$-invariant.
Assume that $w(\la,1)=(\mu,1)$. We use the shorthand notation $D_w(\mu)=W_\mu$,
$v_{w(\la,1)}=w_\mu$. 
In particular, one has ${\rm U}(\fn\T 1)w_\mu\simeq V_{\mu^*}$ with $w_\mu$ being the lowest
weight vector. One also has $W_\mu={\rm U}(\fn[t])w_\mu$. The modules $W_\mu$ play important role in representation theory and
in the theory of Macdonald polynomials
(see e.g. \cite{CL}, \cite{FL1}, \cite{FL2}, \cite{Kn}, \cite{S}, \cite{I}).
In particular, $W_\mu$ are Weyl modules and fusion modules for antidominant $\mu$.

Fixing $dw_\mu=0$, we obtain the Kac-Moody character of $W_\mu$, which is a polynomial in $q$:
\[
\ch_{KM}W_\mu=\sum_{r\ge 0} q^r\ch\{w\in W_\mu:\ dw=rw\},
\]
where $\ch$ is the $\fh$-character. In particular, $\ch_{KM} W_\mu|_{q=0}=\ch V_{\mu^*}$.

Let ${\rm U}(\fn[t])_s$ be the PBW filtration on the universal enveloping algebra.
Since $W_\mu={\rm U}(\fn[t])w_\mu$ we obtain the induced filtration on the Demazure module.
Let $W_\mu^{gr}$ be the associated graded module; thus
\[
W_\mu^{gr}=\bigoplus_{s\ge 0} W_\mu^{gr}(s),\ W_\mu^{gr}(s)=\frac{{\rm U}(\fn[t])_sw_\mu}{{\rm U}(\fn[t])_{s-1}w_\mu}.
\]
We note that $W_\mu^{gr}$ is a representation of the abelian Lie algebra $\fn^a[t]$, where $\fn^a$
is the abelian Lie algebra with the underlying vector space $\fn$.
Let $D$ be the PBW-degree operator on $W_\mu^{gr}$, i.e. $D|_{W^{gr}_\mu(s)}=s\cdot{\rm Id}$.
Let $W^{gr}_\mu(s,r)$
be the set of vectors $v\in W_\mu^{gr}(s)$ such that $dv=rv$. We note that each
$W_\mu(s,r)$ is naturally an $\fh$ module.
We define the PBW character of $W_\mu$ as
\[
\ch_{PBW}W^{gr}_\mu=\sum_{r,s\ge 0} q^rp^s\ch W^{gr}_\mu(s,r).
\]

\begin{rem}
The computation of the PBW character of $W^{gr}_\mu$ looks very interesting, but is out
of reach at the moment even in type $A$. One possible way to solve the problem 
is to find a basis of $W^{gr}_\mu$, i.e. a basis of $W_\mu$ compatible with the
PBW filtration (see \cite{FFL1},\cite{FFL2}, \cite{FFL3}, \cite{G} for the PBW bases of $V_\la$).
Below we describe the PBW bases for two special classes of Demazure modules.   
\end{rem}

\subsection{PBW basis}
Let $\fg=\msl_{n+1}$. Let $\al_{i,j}=\al_i+\dots+\al_j$ ($1\le i\le j\le n$)
be the set of positive roots. We denote by $f_{i,j}=f_{\al_{i,j}}$, $e_{i,j}=e_{\al_{i,j}}$  
the Chevalley generators of $\fg$.
Let $f_{i,j}[k]=f_{i,j}\T t^k$, $e_{i,j}[k]=e_{i,j}\T t^k$.
We list some properties of the Demazure modules $W_\la$ in the following lemma.
\begin{lem}
Let $\la^*=\sum_{i=1}^n m_i\om_i\in P_+$. Then 
\begin{itemize}
\item $W_\la$ is generated from the cyclic vector $w_\la\in W_\la$ by the action of the
operators $e_\al[k]=e_\al\T t^k$, $\al\in P_+$ and $k\ge 0$.
\item $\dim W_\la=\prod_{i=1}^n (\dim V_{\om_i})^{m_i}$, 
$W_\la\simeq \bigotimes_{i=1}^n V_{\om_i}^{\T m_i}$ as $\fg$-modules.
\item $W_\la$ is a $\fg\T\bC[t]$-module; it is isomorphic to the Weyl module.
\item $e_\al[k]w_\la=0$ for $k\ge\sum_{i} m_i$.
\end{itemize}
\end{lem}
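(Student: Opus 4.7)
The strategy is to identify $W_\la$ with the local Weyl module of $\msl_{n+1}[t]$ of highest weight $\la^*$ and read off all four statements from its known structure, following Chari--Pressley and Chari--Loktev.

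The central step is (iii). Antidominance of $\la$ already forces $W_\la$ to be $\fg\T 1$-invariant, as noted in the introduction; together with the inclusion $\fg\T t\bC[t]\subset\widehat{\fn}$ and the defining equality $W_\la=\U(\widehat{\fn})w_\la$, this upgrades $W_\la$ to an $\fg[t]$-module. For the Weyl-module identification, I would verify that $w_\la$ satisfies the defining relations of $W(\la^*)$: the annihilation $(\fn^-\T\bC[t])w_\la=0$, with the $t^0$ part following from lowest-weightness of $w_\la$ in $V_{\la^*}$ and the positive-$t$ part from the level-one integrability condition $(\la^*,\ta)\le 1$ together with the affine $\msl_2$-triples along the root $\ta$; and the Serre-style integrability $(e_\al)^{\la^*(\al^\vee)+1}w_\la=0$ inside the finite-dimensional $\fg$-submodule $V_{\la^*}$. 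A dimension comparison (supplied by (ii)) then forces $W_\la\simeq W(\la^*)$.

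For (ii), I would quote the Chari--Loktev theorem on the $\fg$-module structure of local Weyl modules in type $A$, which gives the tensor-product decomposition and hence the dimension. With (iii) in hand, (i) becomes a PBW ordering argument: since $W_\la=\U(\fg[t])w_\la$, the triangular decomposition $\fg[t]=\fn[t]\oplus\fh[t]\oplus(\fn^-\T\bC[t])$ combined with PBW yields $W_\la=\U(\fn[t])\U(\fh[t])\U(\fn^-\T\bC[t])w_\la$, and the rightmost two factors act on $w_\la$ either trivially (by the Weyl relations just verified) or by scalars (only at $t^0$), leaving $\U(\fn[t])w_\la$ as claimed.

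Finally, (iv) combines finite-dimensionality (from (ii)) with the sharp bound provided by the Chari--Loktev fusion-product realization: $W_\la$ is the associated graded of a tensor product of $\sum_i m_i$ evaluation representations at distinct points, so the total $t$-degree of any PBW monomial acting nontrivially on $w_\la$ cannot exceed $\sum_i m_i - 1$, forcing $e_\al[k]w_\la=0$ for $k\ge\sum_i m_i$. The main obstacle is matching this sharp constant directly from the Demazure-module description: qualitative boundedness is routine, but the precise value $\sum_i m_i$ requires either the explicit fusion-product identification or a careful counting inside the PBW basis to be constructed in the next subsection.
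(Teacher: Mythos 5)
The paper offers no proof of this lemma at all: it is stated as a summary of known properties of level-one Demazure modules in type $A$, with the relevant sources (\cite{CL}, \cite{FL1}, \cite{FL2}, \cite{S}, \cite{I}) cited immediately before. Your plan --- check the local Weyl module relations at the extremal vector, compare dimensions, and import the Chari--Loktev structure theory --- is precisely the standard derivation underlying those references, so there is no real divergence of method, only a difference in how much the authors chose to leave implicit.

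Two points in your sketch need repair. First, your justification of $(f_\al\T t^k)w_\la=0$ for $k\ge 1$ via ``the level-one integrability condition $(\la^*,\ta)\le 1$'' is wrong as stated: $\la^*=\sum_i m_i\om_i$ is an arbitrary dominant weight, so $(\la^*,\ta)$ is typically large; the inequality $(\la,\ta)\le k$ in the paper constrains the highest weight of the ambient irreducible module $L_{\la,1}$ (a fundamental weight or $0$), not the Demazure weight. The correct argument is the extremal-vector computation: $w_\la$ has extremal affine weight $(\la,1)$, and for the real root $\be=-\al+k\delta$ (whose root vector is $f_\al\T t^k$) one has $\langle(\la,1),\be^\vee\rangle=k-\la(\al^\vee)>0$ for $k\ge1$ and $\la$ antidominant, so the $\be$-string through $(\la,1)$ forbids the weight $(\la,1)+\be$ and $f_\al\T t^k$ kills $w_\la$. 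Second, watch the circularity between (ii) and (iii): you prove (iii) by ``a dimension comparison supplied by (ii)'', but you obtain (ii) by quoting Chari--Loktev for local Weyl modules, which presupposes (iii). This is repairable --- the verified relations give a surjection from the Weyl module $W(\la^*)$ onto $W_\la$, Chari--Loktev gives $\dim W(\la^*)\le\prod_i(\dim V_{\om_i})^{m_i}$, and the Demazure character formula (or the Fourier--Littelmann/Sanderson--Ion results) bounds $\dim W_\la$ from below by the same number --- but as written the two items lean on each other. Your fusion-product argument for (iv) is correct and does yield the sharp constant $\sum_i m_i$.
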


In what follows it will be convenient to use the $\mgl_n$ notation for the characters of $V_\la$
and of $W_\la$: we represent the characters as functions in variables $x_1,\dots,x_{n+1}$.
For example, the character of the lowest weight vector $w_{\om_r^*}\in W_{\om_r^*}$ is equal
to $x_{n+1}\dots x_{n-r+2}$. In general, if $\la^*=\sum m_i\om_i$, then
\[
\ch w_\la=x_{n+1}^{m_1}(x_{n+1}x_n)^{m_2}\dots (x_{n+1}\dots x_2)^{m_n}=
\prod_{i=2}^{n+1} x_i^{\la_i},
\]
where $\la_i=m_n+m_{n-1}+\dots + m_{n-i+2}$.

\begin{prop}\label{rectprop}
Let $\la^*=m\om_r$. Then one has:
\begin{itemize}
\item The module $W_\la$ is generated from the vector $w_\la$ by the action of the polynomial 
algebra on variables $e_{i,j}[k]$, $i\le n-r+1\le j$, $k\ge 0$.
\item The PBW degree of a monomial $e_{i_1,j_1}[k_1]\dots e_{i_c,j_c}[k_c]$ is equal to $c$.
\item The PBW character and the Kac-Moody character are related by the formula
\begin{multline*}
\ch_{PBW} W_\la^{gr}(x_1,\dots,x_{n+1},p,q)=\\
\ch_{KM} W_\la(px_1,\dots,px_{n+1-r},x_{n+2-r},\dots,x_{n+1},q).
\end{multline*}
\end{itemize}
\end{prop}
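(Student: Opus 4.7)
The plan is to reduce the computation of the action of $\fn[t]$ on $w_\la$ to the action of an explicit abelian subalgebra. Since $\la^*=m\om_r$, by the preceding Lemma one has $W_\la\simeq V_{\om_r}^{\T m}$ as $\fg$-modules, and under this identification $w_\la$ is the tensor of the lowest weight vectors of the $r$-th fundamental representation. In coordinates, $e_{i,j}$ acts as the matrix unit $E_{i,j+1}$ on the standard representation, so $e_{i,j}[k]\cdot w_\la=0$ unless $j+1\in\{n+2-r,\ldots,n+1\}$ and $i\in\{1,\ldots,n+1-r\}$, i.e.\ unless $i\le n-r+1\le j$. Call such generators \emph{good} and the remaining $e_{i,j}[k]$ \emph{bad}, and denote by $\fa\subset\fn$ the linear span of the good root vectors. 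A short check on root sums shows that for good $\al_{i,j},\al_{i',j'}$ one always has $j+1>n-r+1\ge i'$ and similarly $j'+1>i$, so no such sum is a root; hence $\fa$ is abelian and $A:=S(\fa[t])$ embeds in $\U(\fn[t])$.

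The heart of the proof is the identity
\[
\U(\fn[t])_s\cdot w_\la = A_{\le s}\cdot w_\la\quad\text{for all }s\ge 0,
\]
which gives items (i) and (ii) simultaneously: $A$ acts cyclically on $w_\la$, and the PBW filtration on $W_\la$ is induced from the polynomial-degree filtration on $A$, so a length-$c$ monomial in the good generators has PBW degree $c$. The inclusion $\supseteq$ is tautological. For $\subseteq$, I would carry out a three-case commutator analysis: (a) good-good brackets vanish; (b) bad-good brackets yield good generators or zero; (c) bad-bad brackets yield bad generators or zero. All three cases follow from the observation that in type $A$ the sum $\al_{i,j}+\al_{i',j'}$ is a root only under the boundary conditions $j+1=i'$ or $j'+1=i$, combined with the explicit numerical constraints defining good versus bad. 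Fix any PBW ordering of $\fn[t]$. Any length-$c$ monomial $e_{\al_1}[k_1]\cdots e_{\al_c}[k_c]$ containing a bad factor can be rewritten by commuting one bad factor one step to the right; the straightened monomial still has the same number of bad factors but in a more favorable position, while the commutator term has length $c-1$ and, by (b), either vanishes or has strictly fewer bad factors. Iterating by double induction on length and number of bad factors, every bad factor eventually either reaches the rightmost position and annihilates $w_\la$, or merges into a good one. What survives is a linear combination of good monomials of length at most $c$, which is precisely $A_{\le s}\cdot w_\la$.

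Item (iii) then follows formally. The graded module $W_\la^{gr}$ is a graded quotient of $A=S(\fa[t])$, with $e_{i,j}[k]$ carrying $\fh$-weight $\al_{i,j}$, $d$-weight $k$, and PBW-degree $1$. In the $x$-variables this contributes the factor $x_i/x_{j+1}$ with $i\le n+1-r$ and $j+1\ge n+2-r$. Since the character of $w_\la$ itself involves only $x_{n+2-r},\ldots,x_{n+1}$, the substitution $x_i\mapsto p x_i$ for $i\le n+1-r$ leaves the $w_\la$-factor untouched but introduces exactly one factor of $p$ per good generator in each basis monomial. This accounts for the $p^s$ discrepancy between $\ch_{PBW}W^{gr}_\la$ and $\ch_{KM}W_\la$, yielding the asserted identity.

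The principal obstacle is the commutator bookkeeping underlying (b) and (c): one must rule out any bad-good bracket producing a bad generator, and ensure the double induction really terminates. The root-sum analysis is elementary but has several subcases, and a slip there would jeopardize both the generation claim and the compatibility of filtrations.
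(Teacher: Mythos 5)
Your argument is correct in substance but takes a more laborious route than the paper for the first two items; item (iii) is handled the same way in both. The paper disposes of (i) in one line ($e_\al w_\la=0$ whenever $(\la,\al)=0$, the surviving generators being exactly your ``good'' ones, which automatically commute since $\al_{n-r+1}$ occurs once in each), and it proves (ii) not via your filtration identity but by a weight count: each good generator raises the coefficient of $\al_{n-r+1}$ in the $\fh$-weight by exactly $1$, while \emph{any} root vector in $\fn$ raises it by at most $1$ in type $A$, so a nonzero weight vector whose weight exceeds $\la$ by $c\,\al_{n-r+1}+\cdots$ cannot lie in $F_{c-1}$. You should adopt that observation, because it closes the one small gap in your write-up: the identity $\U(\fn[t])_s w_\la=A_{\le s}w_\la$ shows a length-$c$ good monomial lies in $F_c$, but by itself it does not show the monomial is not already in $F_{c-1}=A_{\le c-1}w_\la$; the $\al_{n-r+1}$-coefficient argument is precisely what rules that out, and without it the phrase ``so a length-$c$ monomial has PBW degree $c$'' is circular. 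Apart from this, your commutator bookkeeping (good--good brackets vanish, bad--good brackets are good or zero, bad--bad are bad or zero) is sound, the double induction terminates, and what it buys you over the paper's terser proof is an explicit presentation of $W_\la^{gr}$ as a cyclic $S(\fa[t])$-module. One further caveat: your justification that $e_{i,j}[k]w_\la=0$ for bad $(i,j)$ and \emph{all} $k\ge 0$ leans on the matrix-unit description, which literally covers only $k=0$; the clean argument is that $\la+\al_{i,j}$ is not a weight of $V_{\om_r}^{\T m}$ when $\al_{i,j}$ is bad, which does follow from the tensor decomposition you invoke.
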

\begin{proof}
The first claim is an immediate consequence of $e_\al w_\la=0$ if $(\la,\al)=0$. To prove the second statement
we note that the PBW degree of a vector $v\in W_\la$ is equal to the coefficient of $\al_{n+1-r}$ in the difference 
between the weight of $v$ and that of $w_\la$ (in each $e_{i,j}$, $i\le n-r\le j$ the simple root 
$\al_{n-r+1}$ shows up exactly once). The last claim follows from the observation that the character of $e_{i,j}$ 
is equal to $x_ix_{j+1}^{-1}$.  
\end{proof}

\begin{rem}
Proposition \ref{rectprop} implies that any basis for the Weyl module $W_{\om_r^*}$ is the PBW basis. 
\end{rem}

In the rest of the section we describe the PBW basis in the case $\la^*=m_1\om_1+m_2\om_n$.
We follow the notation from \cite{CL}.
Let $l\ge 0$ and let $\bfs=(\bfs(1)\le \dots \le \bfs(l))$ be a collection of nonnegative integers.
For a positive root $\al$ we use the notation
\[
e_\al(l,\bfs)=\prod_{1\le k\le l} e_\al[s(k)].
\]
If $\al=\al_{i,j}$, we abbreviate $e_{\al_{i,j}}(l,\bfs)$ by $e_{i,j}(l,\bfs)$.
We first recall several lemmas from \cite{CL}.

Let $\g=\msl_2$.
\begin{lem}\label{sl2}
The vectors $e_{1,1}(l,\bfs)w_{m\om_1}$ subject to the condition $\bfs(l)\le m-l$ form a basis of $W_{(m\om_1)^*}$.
The defining relations of  the $\msl_2[t]$-module $W_{m\om_1}$ are $f[k]w_{(m\om_1)^*}$ ($k\ge 0$), $h[k]w_{m\om_1}$ ($k> 0$),
$e[0]^{N}$ ($N>m$).
\end{lem}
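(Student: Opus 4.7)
The plan is a standard three-step argument: reduce any element of $W_{(m\om_1)^*}$ to an ordered $e$-monomial acting on $w_{(m\om_1)^*}$, prove a Garland-type relation that enforces the cut-off $\bfs(l) \le m-l$, and conclude by matching dimensions. Both the basis statement and the defining-relations statement will follow simultaneously, since the spanning argument invokes only the relations listed in the lemma.

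First, I would use the PBW theorem for $\U(\msl_2[t])$ together with the annihilations $f[k] w_{(m\om_1)^*} = 0$ ($k \ge 0$), $h[k] w_{(m\om_1)^*} = 0$ ($k > 0$), and the weight relation $h[0] w_{(m\om_1)^*} = -m w_{(m\om_1)^*}$ to show that $W_{(m\om_1)^*}$ is spanned by $e$-monomials $e[s_1] \cdots e[s_l] w_{(m\om_1)^*}$. Any element is $u \cdot w_{(m\om_1)^*}$ for some $u \in \U(\msl_2[t])$; writing $u$ in PBW order as a sum of monomials $F H E$ with $F \in \U(\fn^-[t])$, $H \in \U(\fh[t])$, and $E$ a product of $e[k]$'s, the brackets $[e[k], f[j]] = h[k+j]$, $[h[k], e[j]] = 2e[k+j]$, and $[h[k], f[j]] = -2f[k+j]$ allow one to commute $F$ and $H$ past $E$. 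The listed relations then eliminate all contributions where an $f$ or positive-degree $h$ ultimately acts on $w_{(m\om_1)^*}$. Since $[e[k], e[j]] = 0$ in $\msl_2[t]$, the remaining $e$-monomials can always be reordered into the nondecreasing form $e(l,\bfs) w_{(m\om_1)^*}$.

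The main obstacle is to enforce the cut-off $\bfs(l) \le m-l$. I would derive the required Garland-type identity by applying iterated commutators with the $f[r]$'s to the Serre relation $e[0]^{m+1} w_{(m\om_1)^*} = 0$. Using $[e[0], f[r]] = h[r]$ and $[h[r], e[0]] = 2e[r]$, and discarding all terms that produce $f$'s or positive-degree $h$'s acting on $w_{(m\om_1)^*}$, one obtains on the cyclic vector relations of the shape
\[
\sum_{\substack{s_1, \ldots, s_{m+1} \ge 0 \\ s_1 + \cdots + s_{m+1} = N}} e[s_1] \cdots e[s_{m+1}] \, w_{(m\om_1)^*} = 0 \qquad (N \ge 0),
\]
together with their descendants of shorter length obtained by further bracketing. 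A straightening induction on $(l, \bfs(1)+\cdots+\bfs(l))$ then reduces any inadmissible monomial (one with $\bfs(l) > m-l$) to a linear combination of admissible ones. Verifying rigorously that the Garland family truly suffices to produce the tight cut-off $\bfs(l)\le m-l$ (and not just a weaker estimate) is the combinatorially most delicate point.

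For the final step, the preceding lemma gives $\dim W_{(m\om_1)^*} = (\dim V_{\om_1})^m = 2^m$, while the admissible set has cardinality $\sum_{l=0}^m \binom{m}{l} = 2^m$, since nondecreasing sequences of length $l$ bounded above by $m-l$ are in bijection with $l$-element multisets of $\{0, 1, \ldots, m-l\}$. Thus spanning forces linear independence, yielding the basis claim. For the defining-relations assertion, let $\widetilde W$ be the $\msl_2[t]$-module presented by the listed relations together with $h[0] \widetilde w = -m \widetilde w$. The same straightening argument applied to $\widetilde W$ yields $\dim \widetilde W \le 2^m$, and since all listed relations hold in $W_{(m\om_1)^*}$, the canonical surjection $\widetilde W \twoheadrightarrow W_{(m\om_1)^*}$ must be an isomorphism.
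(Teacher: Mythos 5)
The paper does not actually prove this lemma: it is imported from Chari--Loktev \cite{CL} (``We first recall several lemmas from \cite{CL}''), so there is no in-paper argument to measure your proposal against. What you have written is, in outline, the standard proof of this statement for $\msl_2[t]$ Weyl modules, and its architecture is sound: the PBW reduction to ordered $e$-monomials, the count of admissible monomials $\sum_{l=0}^m\binom{m}{l}=2^m$ against $\dim W_{(m\om_1)^*}=(\dim V_{\om_1})^m=2^m$ from the preceding lemma, and the presentation argument via the surjection from the abstractly presented module (where you correctly restore the relation $h[0]\widetilde w=-m\widetilde w$, which the statement of the lemma suppresses) are all exactly the steps used in the cited source.

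Two points need attention in the step that carries all the weight. First, the family of relations actually required is
\[
\sum_{\substack{b_1,\dots,b_s\ge 0\\ b_1+\cdots+b_s=r}} e[b_1]\cdots e[b_s]\,w_{(m\om_1)^*}=0
\qquad\text{whenever } r+s>m,
\]
and to generate all of it from $e[0]^{m+1}w_{(m\om_1)^*}=0$ you must act not only with the $f[r]$'s (which strictly shorten the monomial, since $[f[r],e[0]]=-h[r]$ and the resulting $h$'s are absorbed into $e$'s of higher degree) but also directly with the $h[r]$'s, which preserve the length $s$ and raise the total degree $r$. Your phrase ``descendants of shorter length'' omits these length-preserving descendants, and they are genuinely needed: for $m=2$ the relation $e[1]^2w=0$ is not reachable by $f$-bracketing alone, only via $f[1]$ applied to the $h[1]$-descendant $e[1]e[0]^2w=0$. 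Second, the claim that this family forces the tight cut-off $\bfs(l)\le m-l$ is the nontrivial combinatorial induction (on the length $l$ and the total degree $\bfs(1)+\cdots+\bfs(l)$); you correctly identify it as the delicate point but leave it undone. For a result being recalled from the literature this is acceptable --- the paper itself does no more than cite \cite{CL} --- but a self-contained proof would have to carry out that induction.
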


\begin{lem}
Let $\la=(m\om_n)^*$. Then 
the vectors $\prod_{k=1}^n e_{1,k}(l_k,\bfs_k)w_{\la}$ subject to the conditions
\begin{gather*}
\bfs_1(l_1)\le m-l_1,\ \bfs_2(l_2)\le m-l_1-l_2,\dots, \bfs_{n-1}(l_n)\le m-l_1-\dots-l_n
\end{gather*}
form a PBW basis of $W_{\la}$.

Let $\la=(m\om_1)^*$. Then 
the vectors $\prod_{k=1}^n e_{k,n}(l_k,\bfs_k)w_{\la}$ subject to the conditions
\begin{gather*}
\bfs_n(l_n)\le m-l_n,\ \bfs_{n-1}(l_{n-1})\le m-l_n-l_{n-1},\dots, \bfs_1(l_1)\le m-l_n-\dots-l_1
\end{gather*}
form a PBW basis of $W_{\la}$.
\end{lem}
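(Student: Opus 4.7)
The two statements are interchanged by the diagram automorphism $\al_i\mapsto\al_{n+1-i}$ of $\msl_{n+1}$, which sends $e_{1,k}\leftrightarrow e_{n+1-k,n}$ and swaps the two weights and constraint systems; so it suffices to treat $\la=(m\om_n)^*$. By Proposition \ref{rectprop} applied with $r=n$, $W_\la$ is a cyclic module over the \emph{commutative} polynomial algebra generated by $\{e_{1,k}[s]:1\le k\le n,\ s\ge 0\}$ (commutativity since $\al_{1,i}+\al_{1,j}$ is never a root in type $A$), and the PBW degree of $\prod_k e_{1,k}(l_k,\bfs_k)$ equals $\sum_k l_k$. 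So it suffices to prove that the claimed monomials form a basis of $W_\la$.

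For the dimension count: the number of admissible $\bfs_j$ for fixed $(l_j)$ is $\binom{m-l_1-\cdots-l_{j-1}}{l_j}$ (weakly increasing length-$l_j$ sequences in $\{0,\dots,m-l_1-\cdots-l_j\}$). The product over $j$ telescopes to the multinomial coefficient $\binom{m}{l_1,\ldots,l_n,m-\sum_j l_j}$, whose sum over $(l_1,\dots,l_n)$ is $(n+1)^m=\dim W_\la$. Hence it is enough to prove that the monomials span.

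Spanning proceeds by induction on $n$, with base $n=1$ being Lemma \ref{sl2}. For the inductive step, apply Lemma \ref{sl2} to the sub-$\msl_2$ current algebra spanned by $\{e_{1,n}[s],f_{1,n}[s],h_{1,n}[s]\}$: since $w_\la$ has $h_{1,n}$-weight $-m$, is annihilated by every $f_{1,n}[s]$ and by $h_{1,n}[s]$ for $s>0$, and satisfies $e_{1,n}^{m+1}w_\la=0$ by integrability, the cyclic $\msl_2$-submodule generated by $w_\la$ is a quotient of the $\msl_2$-Weyl module and is therefore spanned by $\{e_{1,n}(l_n,\bfs_n)w_\la:\bfs_n(l_n)\le m-l_n\}$. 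Applying the remaining commuting generators $e_{1,k}[s]$, $k<n$, and straightening via the Garland-type identities in the Weyl-module presentation \cite{CL} reduces any product to the claimed normal form: on an intermediate vector $\prod_{j<k}e_{1,j}(l_j,\bfs_j)w_\la$ the effective $\msl_2$-weight for the triple attached to $\al_{1,k}$ is shifted down by $l_1+\cdots+l_{k-1}$, so a Garland relation of the form $\sum_{|\bfs|=N}e_{1,k}[s_1]\cdots e_{1,k}[s_{l+1}]\cdot v=0$ kicks in at the reduced threshold, yielding $\bfs_k(l_k)\le m-l_1-\cdots-l_k$.

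The principal obstacle is precisely this sharpening step: the raw relation $e_{1,k}[s]w_\la=0$ for $s\ge m$ only yields $\bfs_k(l_k)<m$, whereas the claim requires a bound that shrinks with each previously-placed operator. Establishing this requires the full Chari--Loktev set of Garland identities applied to intermediate vectors, with careful bookkeeping of the effective affine $\msl_2$-weight; this is the technical heart of the argument.
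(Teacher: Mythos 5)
The paper does not actually prove this lemma: its ``proof'' is a one-line citation to Chari--Loktev \cite{CL} for the basis statement, combined with the observation (Proposition \ref{rectprop}) that for these rectangular weights any basis of $W_\la$ is automatically a PBW basis, since the PBW degree of a vector is read off from its $\fh$-weight. Your reductions are sound and go beyond what the paper writes down: the diagram-automorphism symmetry between the two cases is correct, the commutativity of the generators $e_{1,k}[s]$ and the identification of PBW degree with the number of factors are exactly the content of Proposition \ref{rectprop} with $r=n$, and your dimension count is right --- the number of admissible $\bfs_j$ is $\binom{m-l_1-\cdots-l_{j-1}}{l_j}$, the product telescopes to a multinomial coefficient, and the total is $(n+1)^m=\dim W_\la$. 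This correctly reduces the lemma to a spanning statement.

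The gap is that the spanning statement is never established. You yourself flag it: the naive relation $e_{1,k}[s]w_\la=0$ for $s\ge m$ only gives $\bfs_k(l_k)<m$, while the claimed normal form requires the threshold to drop by $l_1+\cdots+l_{k-1}$ after the earlier operators have been applied, and you defer this ``sharpening'' to unspecified Garland identities acting on intermediate vectors ``with careful bookkeeping.'' That sharpening \emph{is} the lemma --- it is the main technical result of \cite{CL} for these modules --- so a proof that stops there has not proved anything beyond what a direct citation would give. Moreover, the inductive structure you announce (induction on $n$ with base Lemma \ref{sl2}) is not the one you then use: the step you sketch applies the $\msl_2^{\al_{1,n}}$-triple to $w_\la$ and only yields the loosest constraint $\bfs_n(l_n)\le m-l_n$, whereas in the stated ordering $\al_{1,n}$ must receive the tightest bound $\bfs_n(l_n)\le m-l_1-\cdots-l_n$; reconciling these is precisely the unexecuted step. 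Either carry out the straightening explicitly (as \cite{CL} does) or, as the paper does, cite \cite{CL} for the basis and keep only the Proposition \ref{rectprop} observation that it is a PBW basis.
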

\begin{proof}
It is proved in \cite{CL} that the vectors described above form the PBW bases of $W_{(m\om_n)^*}$
and of $W_{(m\om_1)^*}$; these are PBW bases thanks to Proposition \eqref{rectprop}.
\end{proof}

We prove the following theorem.

\begin{thm}\label{main}
Let $\la^*=m_1\om_1+m_2\om_n$. Then the module $W_\la$ has a PBW basis of the form
\begin{equation}\label{basis}
e_{1,n}(l_{1,n},\bfs_{1,n})\prod_{k=1}^{n-1} e_{1,k}(l_{1,k},\bfs_{1,k})\prod_{k=2}^{n} e_{k,n}(l_{k,n},\bfs_{k,n})w_{\la}
\end{equation}
subject to the conditions
\begin{gather}
\label{1}
\bfs_{1,k}(l_{1,k})\le m_2-l_{1,1}-\dots-l_{1,k},\ k=1,\dots,n-1,\\
\label{n}
\bfs_{k,n}(l_{k,n})\le m_1-l_{n,n}-\dots-l_{k,n},\ k=n,\dots,2,\\
\label{1,n}
\bfs_{1,n}(l_{1,n})\le m_1+m_2-l_{1,1}-\dots -l_{1,n-1}-l_{1,n}-l_{2.n}-\dots-l_{n,n}.
\end{gather}
\end{thm}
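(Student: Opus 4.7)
The plan is to establish that the monomials \eqref{basis} subject to \eqref{1}--\eqref{1,n} form a PBW basis of $W_\la$ in three stages: reduce the essential generators, derive the three constraints from $\msl_2$-subquotients, and conclude by a dimension count.

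First I would prove that $e_{i,j}[k]w_\la=0$ for all $k\ge 0$ whenever $1<i\le j<n$. For $k=0$ this is a consequence of the tensor product decomposition $W_\la\simeq V_{\om_1}^{\T m_1}\T V_{\om_n}^{\T m_2}$ of $\fg$-modules (stated in the lemma preceding Proposition \ref{rectprop}): the lowest weight vector of each $V_{\om_1}$-factor is killed by $e_{i,j}$ for $j<n$, and that of each $V_{\om_n}$-factor by $e_{i,j}$ for $i>1$, so the Leibniz rule gives the vanishing on $w_\la$. For $k\ge 1$ I would use the identity $e_{i,j}[k]=[f_{1,i-1}[0],e_{1,j}[k]]$ in $\fgh$; since $f_{1,i-1}[0]w_\la=0$ (lowest weight condition) the problem reduces to checking $f_{1,i-1}[0]e_{1,j}[k]w_\la=0$, which a direct calculation in the tensor product model confirms. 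Consequently $W_\la$ is generated from $w_\la$ by $\{e_{1,j}[k]\}_{j<n}$, $\{e_{i,n}[k]\}_{i>1}$ and $\{e_{1,n}[k]\}$ (all $k\ge 0$); using the only nontrivial brackets $[e_{1,k},e_{k+1,n}]=e_{1,n}$ (with $e_{1,n}$ being central in $\fn$), one reorders any monomial into the form \eqref{basis}, the stray $e_{1,n}$-commutators being absorbed into the leading block.

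The three constraints come from three $\msl_2$-type reductions. Passing to the cyclic submodule generated by $\{e_{1,j}[k]\}_{j<n,\,k\ge 0}$ on $w_\la$, one obtains a quotient of $W_{(m_2\om_n)^*}$, and the $\la=(m\om_n)^*$ case of the preceding Lemma produces \eqref{1}. Symmetrically, \eqref{n} follows from the $\la=(m\om_1)^*$ case applied to the $m_1\om_1$-direction. For \eqref{1,n} I would restrict to the $\msl_2[t]$-subalgebra attached to the highest root $\al_{1,n}$; since its $\msl_2$-weight on $w_\la$ equals $m_1+m_2$, Lemma \ref{sl2} applied with $m=m_1+m_2$ yields the bound on $\bfs_{1,n}(l_{1,n})$, the subtracted terms $l_{1,1}+\dots+l_{n,n}$ reflecting the decrease of the $\msl_2$-weight already effected by the other factors in \eqref{basis}.

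Finally I would verify linear independence by comparing the number of tuples $(l_{i,j},\bfs_{i,j})$ satisfying \eqref{1}--\eqref{1,n} to $\dim W_\la=(n+1)^{m_1+m_2}$, either by a direct generating-function computation or by induction on $m_1+m_2$ using a short exact sequence that peels off one $V_{\om_1}$- or $V_{\om_n}$-tensor factor. This dimension count is where I expect the main obstacle: the three blocks of \eqref{basis} are coupled through the constraint \eqref{1,n}, making the combinatorics more subtle than in the one-sided Chari--Loktev situation treated in the preceding Lemma. A possible alternative is to compute the Hilbert series of the quotient of $S(\fn[t])$ by the initial ideal of the Demazure relations, but this would require precise control of the cross straightening relations between the three blocks.
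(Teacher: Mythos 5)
Your overall strategy coincides with the paper's: exhibit the monomials as a spanning set and then match the count of admissible tuples against $\dim W_\la=(n+1)^{m_1+m_2}$. Your steps for reducing the generators and for deriving \eqref{1} and \eqref{n} by restriction to the subalgebras in the $\om_1$- and $\om_n$-directions are sound and agree with the paper. But there is a genuine gap at the crux, namely condition \eqref{1,n}. You propose to ``restrict to the $\msl_2[t]$-subalgebra attached to $\al_{1,n}$'' and apply Lemma \ref{sl2} with $m=m_1+m_2-\sum l_{1,k}-\sum l_{k,n}$ to the vector $v=\prod e_{1,k}(l_{1,k},\bfs_{1,k})\prod e_{k,n}(l_{k,n},\bfs_{k,n})w_\la$. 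For Lemma \ref{sl2} to give the bound $\bfs_{1,n}(l_{1,n})\le m-l_{1,n}$ you need $\U(\msl_2^\theta[t])\,v$ to be a quotient of the local $\msl_2$ Weyl module of highest weight $m$, which requires both $f_\theta[k]v=0$ for all $k$ and $e_\theta[0]^{N}v=0$ for $N>m$. Neither holds automatically: since $[f_\theta,e_{1,k}]=\pm f_{k+1,n}$ and $[f_\theta,e_{k,n}]=\pm f_{1,k-1}$, the operator $f_\theta[k]$ does \emph{not} annihilate $v$ for $n\ge 3$ (this is exactly the point the paper flags as ``the difference with the $\msl_3$ case''), and the relation $e_\theta[0]^{N}v=0$ is a nontrivial statement that must be derived from $e_\theta[0]^{m_1+m_2+1}w_\la=0$. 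The paper handles the second point by applying the lowering differential operators $\pa_{1,i-1}[k]$, $\pa_{j+1,n}[k]$ to that relation (obtaining \eqref{zero}), and handles the first by introducing an auxiliary filtration $G_\bullet$ of $W_\la^{gr}$ by weights of the non-$\theta$ part of the monomial, so that in the associated graded module $v$ genuinely becomes an $\msl_2^\theta[t]$-lowest weight vector and the $\msl_3$-argument of Lemma \ref{sl3} goes through. Your phrase about the subtracted terms ``reflecting the decrease of the $\msl_2$-weight'' only accounts for the weight bookkeeping, not for these two missing verifications.

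A secondary remark: you locate the main obstacle in the dimension count, but that part is immediate --- it reduces to the multinomial identity $\sum 2^{m_1+m_2-\sum l_{1,j}-\sum l_{i,n}}\binom{m_2}{l_{1,1},\dots,l_{1,n-1}}\binom{m_1}{l_{n,n},\dots,l_{n-1,n}}=(n+1)^{m_1+m_2}$, which is the paper's one-line Lemma \ref{dim}. The coupling of the three blocks through \eqref{1,n} does not complicate the count, since for fixed $l_{i,j}$'s the number of admissible $\bfs$'s factors. The actual difficulty of the theorem sits precisely in the spanning step you treated as routine.
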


\begin{lem}\label{dim}
The number of solutions of inequalities \eqref{1}, \eqref{n}, \eqref{1,n}  is equal to the dimension of $W_\la$.
\end{lem}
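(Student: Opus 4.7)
The plan is to evaluate the number of solutions of \eqref{1}, \eqref{n}, \eqref{1,n} as an explicit sum of binomials and check that the answer equals $\dim W_\la = (n+1)^{m_1+m_2}$, the latter following from Lemma 1.3 applied to $\la^* = m_1\om_1 + m_2\om_n$ together with $\dim V_{\om_1} = \dim V_{\om_n} = n+1$.

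\textbf{Step 1: reduction to counting nondecreasing sequences.} Each $\bfs_{i,j}$ is a nondecreasing sequence of nonnegative integers of length $l_{i,j}$, bounded above by the right hand side of the corresponding inequality. The standard identity is that the number of nondecreasing sequences $0\le s_1\le\cdots\le s_l\le N$ equals $\binom{N+l}{l}$. In particular the number of admissible $\bfs_{1,k}$ of length $l_{1,k}$ in \eqref{1} is $\binom{m_2-l_{1,1}-\cdots-l_{1,k-1}}{l_{1,k}}$, and similarly for \eqref{n} and \eqref{1,n}.

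\textbf{Step 2: collapse the $\bfs_{1,k}$ factors.} Set $A=l_{1,1}+\cdots+l_{1,n-1}$. The product
\[
\prod_{k=1}^{n-1}\binom{m_2-l_{1,1}-\cdots-l_{1,k-1}}{l_{1,k}}
= \binom{m_2}{l_{1,1},\ldots,l_{1,n-1},m_2-A}
\]
telescopes into a multinomial coefficient, and summing over all compositions of $A$ into $n-1$ parts gives $\binom{m_2}{A}(n-1)^A$ (interpret the multinomial as a distribution of $m_2$ labeled objects into $n$ boxes, the last of size $m_2-A$). The analogous computation for \eqref{n}, with $B=l_{2,n}+\cdots+l_{n,n}$, yields $\binom{m_1}{B}(n-1)^B$.

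\textbf{Step 3: deal with the middle root and sum.} For fixed $A,B$, summing over $l_{1,n}$ the factor $\binom{m_1+m_2-A-B-l_{1,n}+l_{1,n}}{l_{1,n}}=\binom{m_1+m_2-A-B}{l_{1,n}}$ contributed by $\bfs_{1,n}$ produces $2^{m_1+m_2-A-B}$. Thus the total number of solutions is
\[
\sum_{A=0}^{m_2}\sum_{B=0}^{m_1}\binom{m_2}{A}(n-1)^A\binom{m_1}{B}(n-1)^B\,2^{m_1+m_2-A-B},
\]
which factors as $\bigl((n-1)+2\bigr)^{m_2}\bigl((n-1)+2\bigr)^{m_1}=(n+1)^{m_1+m_2}$.

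The argument is essentially routine combinatorics; the only delicate point is Step~2, where one must check that the chain of binomials arising from \eqref{1} (resp.\ \eqref{n}) really does collapse into a single multinomial after the sum over compositions. Once this telescoping is in place, the rest is a direct application of the binomial theorem.
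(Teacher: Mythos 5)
Your proof is correct and follows essentially the same route as the paper: count the admissible $\bfs_{i,j}$ via $\binom{N+l}{l}$, telescope the products from \eqref{1} and \eqref{n} into multinomial coefficients, pick up $2^{m_1+m_2-A-B}$ from \eqref{1,n}, and evaluate the resulting sum by the multinomial/binomial theorem. The paper records exactly this sum and dismisses its evaluation as ``clear,'' so your write-up just supplies the details it omits.
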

\begin{proof}
We know that $\dim W_{\la}=(n+1)^{m_1+m_2}$. So we only need to show that
\[
\sum_{\substack{l_{1,1}+\dots+l_{1,n-1}\le m_2\\ l_{n,n}+\dots+l_{n-1,n}\le m_1}} 
2^{m_1+m_2-l_{1,1}+\dots+l_{1,n-1}-l_{n,n}+\dots+l_{n-1,n}}
\bin{m_2}{l_{1,1},\dots,l_{1,n-1}} \bin{m_1}{l_{n,n},\dots,l_{n-1,n}}
\]
is equal to $(n+1)^{m_1+m_2}$, which is clear.
\end{proof}

Let $F_s$ be the PBW filtration on $W_\la$. For any $\al\in\triangle_+$ and $k\ge 0$ one has
$f_\al[k]F_s\subset F_s$. Hence we obtain the induced PBW-degree zero operators on $W^{gr}_\la$, 
which we denote by $\pa_\al[k]$. 
Recall that $W^{gr}_\la$ can be represented as the quotient of the polynomial ring in variables
$e_\al[k]$. We have the following easy lemma:
\begin{lem}
The operators $\pa_\al[k]$ are induced by the differential operators (which we also denote by 
$\pa_\al[k]$) on the
polynomial algebra in variables $e_\al[k]$. One has
$\pa_\al[k] e_\beta[r]=0$ unless $[f_\al,e_\beta]=c_{\al,\beta}^\gamma e_\gamma$
for some $\gamma\in\triangle_+$. If this equality holds, then
$\pa_\al[k] e_\beta[r]=c_{\al,\beta}^\gamma e_\gamma[k+r]$.
\end{lem}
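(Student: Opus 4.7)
The plan is to compute the action of $f_\al[k]$ on a PBW monomial in $W_\la$ by commuting $f_\al[k]$ past the $e$-factors and identifying which terms survive in the associated graded quotient $F_s/F_{s-1}$.

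Start from the Leibniz-style expansion
\[
f_\al[k] \cdot e_{\beta_1}[r_1] \cdots e_{\beta_s}[r_s] w_\la = \sum_{i=1}^{s} e_{\beta_1}[r_1] \cdots [f_\al[k], e_{\beta_i}[r_i]] \cdots e_{\beta_s}[r_s] w_\la + e_{\beta_1}[r_1] \cdots e_{\beta_s}[r_s] f_\al[k] w_\la.
\]
The last summand vanishes: since $w_\la$ is the lowest weight vector of the local Weyl module $W_\la$, the defining relations give $f_\gamma[m] w_\la = 0$ and $h[m] w_\la = \delta_{m,0} \la(h) w_\la$ for all $\gamma \in \triangle_+$, $h \in \fh$ and $m \ge 0$.

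Each bracket $[f_\al[k], e_{\beta_i}[r_i]]$ falls into four classes according to the root combinatorics of $\fg$: (i) it is zero; (ii) it equals $c_{\al,\beta_i}^\gamma e_\gamma[k + r_i]$ for $\gamma = \beta_i - \al \in \triangle_+$; (iii) it equals $-h_\al[k + r_i]$ if $\beta_i = \al$; or (iv) it is a scalar multiple of $f_{\al - \beta_i}[k + r_i]$ if $\al - \beta_i \in \triangle_+$. I would next prove by a joint induction on $s$ the auxiliary statement that both $\fh[t]$ and $\fn^-[t]$ preserve every $F_s$. Given this, the terms coming from (iii) and (iv) have the form $e_{\beta_1}[r_1] \cdots e_{\beta_{i-1}}[r_{i-1}] \cdot X \cdot e_{\beta_{i+1}}[r_{i+1}] \cdots e_{\beta_s}[r_s] w_\la$ with $X \in \fh[t] \cup \fn^-[t]$; since $e_{\beta_{i+1}}[r_{i+1}] \cdots e_{\beta_s}[r_s] w_\la \in F_{s-i}$, the auxiliary statement places the whole expression in $F_{(i-1)+(s-i)} = F_{s-1}$, so it contributes $0$ modulo $F_{s-1}$.

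Consequently $f_\al[k]$ preserves $F_s$, and in $F_s/F_{s-1}$ only case (ii) survives. The induced operator $\pa_\al[k]$ then acts on the symbol of $e_{\beta_1}[r_1] \cdots e_{\beta_s}[r_s] w_\la$ exactly as the derivation of the polynomial ring $\bC[e_\beta[r]]$ that sends $e_\beta[r]$ to $c_{\al,\beta}^\gamma e_\gamma[k + r]$ in case (ii) and to zero otherwise. Since $W_\la^{gr}$ is a quotient of this polynomial ring, the derivation descends to the claimed differential operator. The main obstacle is the bookkeeping inside the joint induction: the same four-case analysis has to be carried out for $[h[m], e_\beta[r]]$ (always of type (ii)) and for $[f_\gamma[m], e_\beta[r]]$ (all four cases possible), and the reductions must feed back through the induction without ever increasing PBW degree; once this is set up carefully, the identification of the leading symbol of $f_\al[k]$ with the announced derivation is automatic.
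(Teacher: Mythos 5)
The paper states this as an ``easy lemma'' and gives no proof at all, so there is nothing to compare against; your argument is correct and is exactly the standard one the authors are implicitly relying on. The commutator expansion, the vanishing of the tail term via the Weyl-module relations $f_\gamma[m]w_\la=0$ and $h[m]w_\la=\delta_{m,0}\la(h)w_\la$, and the four-case analysis of $[f_\al[k],e_{\beta_i}[r_i]]$ according to whether $\beta_i-\al$ is a positive root, zero, a negative root, or not a root, together with the joint induction showing $\fh[t]$ and $\fn^-[t]$ preserve each $F_s$, correctly isolate case (ii) as the only contribution modulo $F_{s-1}$ and identify $\pa_\al[k]$ with the asserted derivation. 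One cosmetic remark: the well-definedness of the derivation on the quotient ring is not something you need to ``descend'' separately --- it is automatic because $\pa_\al[k]$ is already a well-defined operator on $W_\la^{gr}$ induced by $f_\al[k]$, and your computation merely identifies it on the spanning set of monomial images.
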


Now let $\theta$ be the highest root of $\fg$. Let $\msl_2^\theta$ be the $\msl_2$ algebra
generated by $f_\theta$ and $e_\theta$.

\begin{lem}
The differential operators $\pa_\theta[k]$ vanish. The operators $f_\theta[k]$ map
$F_s$ to $F_{s-1}$ and hence induce the degree minus one operators $f^{gr}_\theta[k]$ on $W_\la^{gr}$. 
The operators $f^{gr}_\theta[k]$, $e_\theta[k]$ and $h_\theta[k]$ form the Lie algebra $\msl_2^\theta[t]$,
isomorphic to $\msl_2[t]$.
\end{lem}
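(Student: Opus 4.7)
The plan is to prove the three claims of the lemma in sequence, using the description from the preceding lemma of $\pa_\al[k]$ as a derivation on the polynomial ring $\bC[e_\al[r]]$ of which $W_\la^{gr}$ is a quotient.

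For the vanishing of $\pa_\theta[k]$: by the preceding lemma, $\pa_\theta[k] e_\beta[r]$ is zero unless $[f_\theta,e_\beta]$ is a nonzero multiple of some $e_\gamma$ with $\gamma\in\triangle_+$. For $\beta=\theta$, $[f_\theta,e_\theta]=h_\theta\in\fh$, which is not of this form. For $\beta\in\triangle_+\setminus\{\theta\}$, the weight $\beta-\theta$ is a nonzero non-positive integer combination of simple roots, since $\theta$ is the highest root; hence $[f_\theta,e_\beta]$ lies in $\fh\oplus\fn^-$ and again has no $e_\gamma$-component. Thus $\pa_\theta[k]$ annihilates every polynomial generator, and being a derivation, vanishes identically. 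The vanishing of the induced degree-zero part of $f_\theta[k]$ on $W_\la^{gr}$ means $f_\theta[k] F_s\subset F_{s-1}$ in $W_\la$, and so $f_\theta[k]$ induces a well-defined degree $-1$ operator $f^{gr}_\theta[k]\colon F_s/F_{s-1}\to F_{s-1}/F_{s-2}$.

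To establish the $\msl_2[t]$ structure, we verify each defining relation as operators on $W_\la^{gr}$. The relations $[e_\theta[k],e_\theta[l]]=0$ and $[f_\theta[k],f_\theta[l]]=0$ already hold in $\U(\fgh)$ (using that $2\theta\notin\triangle_+$) and descend at once. The relations $[h_\theta[k],h_\theta[l]]=0$, $[h_\theta[k],e_\theta[l]]=2 e_\theta[k+l]$ and $[h_\theta[k],f^{gr}_\theta[l]]=-2 f^{gr}_\theta[k+l]$ descend because $\fh[t]$ preserves the PBW filtration (with degree zero). The crucial relation is $[e_\theta[k],f^{gr}_\theta[l]]=h_\theta[k+l]$: for $v\in F_s$, both $e_\theta[k]f_\theta[l]v$ and $f_\theta[l]e_\theta[k]v$ land in $F_s$ since $f_\theta[l]v\in F_{s-1}$ and $e_\theta[k]v\in F_{s+1}$, so the identity $e_\theta[k]f_\theta[l]v-f_\theta[l]e_\theta[k]v=h_\theta[k+l]v$ in $W_\la$, reduced modulo $F_{s-1}$, is precisely the desired equation on $F_s/F_{s-1}$.

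The main conceptual content lies in the vanishing step: recognizing that applying $f_\theta$ to any positive-root Chevalley generator never produces another positive-root Chevalley generator, solely because $\theta$ is the highest root. Once this is secured, the remaining work is routine bookkeeping of the asymmetric filtration behavior of $e_\theta[k]$ (raising by one) and $f^{gr}_\theta[l]$ (lowering by one) when extracting the degree-zero part of their commutator on the associated graded.
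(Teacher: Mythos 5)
Your proof is correct and follows essentially the same route as the paper: the whole lemma rests on the observation that $[f_\theta,e_\al]$ has no component in $\fn$ because $\theta$ is the highest root, which simultaneously gives $\pa_\theta[k]=0$ and $f_\theta[k]F_s\subset F_{s-1}$ (the paper derives these in the opposite order, and in fact writes $[f_\theta,e_\al]\in\fb$ where $\fh\oplus\fn^-$ is meant, as you correctly state). The paper dismisses the $\msl_2^\theta[t]$ relations as ``clear''; your explicit check of $[e_\theta[k],f^{gr}_\theta[l]]=h_\theta[k+l]$ on the associated graded is exactly the routine verification intended.
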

\begin{proof}
We note that $[f_\theta,e_\al]\in\fb$ for any $\al\in\triangle_+$ and hence $f_\theta[k] F_s\subset F_{s-1}$.
Therefore we obtain that the operators $\pa_\theta[k]$ vanish, but there exists degree minus one operators 
$f_\theta^{gr}[k]$ on $W_\la^{gr}$. The last statement is clear.
\end{proof}

We now prove the main theorem. We first sketch the proof in the $\msl_3$-case, and then give the proof for general $n$.
\begin{lem}\label{sl3}
Let $\fg=\msl_3$, $\la^*=m_1\om_1+m_2\om_2$. Then the vectors
\begin{equation}\label{monom}
e_{1,1}(l_{1,1},\bfs_{1,1})e_{2,2}(l_{2,2},\bfs_{2,2})e_{1,2}(l_{1,2},\bfs_{1,2})w_\la
\end{equation}
subject to the conditions
\begin{equation}\label{cond}
s_{1,1}(l_{1,1})\le m_2-l_{1,1},\  s_{2,2}(l_{2,2})\le m_1-l_{2,2},\
s_{1,2}(l_{1,2})\le m_1+m_2-l_{1,1}-l_{2,2}-l_{1,2}
\end{equation}
form a basis of $W_\la^{gr}$.
\end{lem}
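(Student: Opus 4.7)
The plan is a spanning-plus-dimension argument: Lemma \ref{dim}, specialized to $n = 2$, supplies the count $3^{m_1+m_2} = \dim W_\la$ for the number of admissible triples $(l_{1,1},\bfs_{1,1}), (l_{2,2},\bfs_{2,2}), (l_{1,2},\bfs_{1,2})$ satisfying \eqref{cond}, so it suffices to prove that any monomial violating one of the three inequalities vanishes in $W_\la^{gr}$.

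The first two inequalities come from the subalgebras $\msl_2^{\al_i}[t]$ acting on $w_\la$. Since $dw_\la = 0$ and the $d$-eigenvalues on $W_\la$ are non-negative, a weight count forces $f_{\al_i}[k]w_\la = 0$ for $k \ge 0$ and $h_{\al_i}[k]w_\la = 0$ for $k > 0$, while $(\la,\al_1^\vee) = -m_2$ and $(\la,\al_2^\vee) = -m_1$ give $e_1^{m_2+1}w_\la = 0$ and $e_2^{m_1+1}w_\la = 0$. Lemma \ref{sl2} then yields the Chari--Loktev vanishing $e_{1,1}(l,\bfs)w_\la = 0$ in $W_\la$ whenever $\bfs(l) > m_2 - l$, and analogously $e_{2,2}(l,\bfs)w_\la = 0$ whenever $\bfs(l) > m_1 - l$. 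Since $W_\la^{gr}$ is an abelian $\fn^a[t]$-module, the factor $e_{1,1}(l_{1,1},\bfs_{1,1})$ commutes past $e_{2,2}(l_{2,2},\bfs_{2,2})e_{1,2}(l_{1,2},\bfs_{1,2})$ in the graded picture, so the vanishing extends to the full monomial and delivers the first two bounds of \eqref{cond}.

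For the third inequality I use the subalgebra $\msl_2^\theta[t]$ (with $\theta = \al_1 + \al_2$) together with the degree $-1$ graded operators $f_\theta^{gr}[k]$ produced by the preceding lemma, viewing $v := e_{1,1}(l_{1,1},\bfs_{1,1})e_{2,2}(l_{2,2},\bfs_{2,2})w_\la$ as an anchor in $W_\la^{gr}$ of $h_\theta$-weight $-(m_1+m_2) + l_{1,1} + l_{2,2}$. The key computation is that $f_\theta^{gr}[k]v = 0$: each commutation of $f_\theta[k]$ with an $e_{1,1}$- or $e_{2,2}$-factor produces an element of $\fn^-$ (namely $\pm f_{\al_2}$ or $\pm f_{\al_1}$), which must then commute with one further $e$-factor to yield a Cartan scalar acting on $w_\la$; this two-step reduction drops the PBW degree by two, so the image in the degree $-1$ subquotient vanishes. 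Combining this with the $h_\theta$-weight of $v$ and applying Lemma \ref{sl2} inside $\msl_2^\theta[t]$ delivers the third bound of \eqref{cond}.

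The main obstacle is that a strict Chari--Loktev derivation also demands $h_\theta[k]v = 0$ for $k > 0$, which fails in $W_\la^{gr}$: the $h_\theta$-currents shift the $t$-levels of individual $e$-factors of $v$, producing non-zero PBW-preserving contributions. Bypassing this requires either strengthening the Chari--Loktev identity to accommodate such level-shifts (for instance by induction on the total $t$-level, which the shifts only raise), or a reduction by induction on $l_{1,1}+l_{2,2}$ to the anchor case $l_{1,1} = l_{2,2} = 0$ where $\msl_2^\theta[t]$-Chari--Loktev on $w_\la$ applies directly and the resulting $e_{1,2}$-vanishing propagates via the abelian $\fn^a[t]$-action on $W_\la^{gr}$. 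Once the three bounds hold, the dimension match of Lemma \ref{dim} upgrades spanning to a basis.
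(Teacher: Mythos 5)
Your proposal follows essentially the same route as the paper: Lemma \ref{dim} reduces everything to spanning; the first two bounds of \eqref{cond} come from restricting to the simple-root copies of $\msl_2[t]$ and invoking Lemma \ref{sl2}; and the third bound comes from the algebra $\msl_2^\theta[t]$ acting on the anchor $v=e_{1,1}(l_{1,1},\bfs_{1,1})e_{2,2}(l_{2,2},\bfs_{2,2})w_\la$, using exactly the paper's key observation that $f_\theta^{gr}[k]v=0$ because $[f_{1,2},e_{1,1}]=f_{2,2}$ and $[f_{1,2},e_{2,2}]=-f_{1,1}$ force a drop of two in PBW degree. Two remarks on the points where you diverge. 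First, the paper obtains the nilpotency $e_{1,2}[0]^m v=0$ for $m>m_1+m_2-l_{1,1}-l_{2,2}$ (relation \eqref{rel}) by applying the differential operators $\pa_{1,1}[k]$, $\pa_{2,2}[k]$ to $e_{1,2}[0]^{m_1+m_2+1}w_\la=0$, rather than from the $h_\theta$-weight of $v$ and $\msl_2$-integrability of $W_\la^{gr}$; both derivations are fine. Second, the $h_\theta[k]$ subtlety you flag is real (the currents $h_\theta[k]$, $k>0$, do shift $t$-levels and do not kill $v$), and the paper does not address it explicitly either; but of your two proposed fixes only the first is viable. The reduction to the anchor case $l_{1,1}=l_{2,2}=0$ cannot work, because it would only yield relations for $\bfs_{1,2}(l_{1,2})>m_1+m_2-l_{1,2}$, and multiplying those by $e_{1,1}$- and $e_{2,2}$-factors in the abelian algebra $\fn^a[t]$ never recovers the sharper bound $m_1+m_2-l_{1,1}-l_{2,2}-l_{1,2}$, which is the whole content of the third inequality. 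The filtration-by-$d$-degree argument (the error terms produced by $h_\theta[k]$, $k>0$, strictly raise the energy, which is bounded on the finite-dimensional module $W_\la$) is the correct repair, and is in the spirit of the auxiliary filtration $G_\bullet$ the paper introduces in the general $\msl_{n+1}$ proof.
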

\begin{proof}
Consider an arbitrary vector of the form \eqref{monom}. We want to show that it can be rewritten as
a linear combination of monomials subject to conditions \eqref{cond}. Restricting
the module $W_\la$ to the subalgebras $\msl_2[t]$ corresponding to simple roots
we can assume that
\[
s_{1,1}(l_{1,1})\le m_2-l_{1,1},\  s_{2,2}(l_{2,2})\le m_1-l_{2,2}
\]
(since we know that these restrictions produce basis in the $\msl_2$ case, see Lemma \ref{sl2}).
Now we note that
\begin{equation}\label{rel}
e_{1,2}[0]^m e_{1,1}(l_{1,1},\bfs_{1,1})e_{2,2}(l_{2,2},\bfs_{2,2})w_\la=0
\end{equation}
provided $m+l_{1,1}+l_{2,2}>m_1+m_2$ (this can be shown via applying the differential operators, see the
proof in the general case below). Now consider the action of the algebra $\msl_2^\theta[t]$.
We note that 
\[
f_{1,2}[k] e_{1,1}(l_{1,1},\bfs_{1,1})e_{2,2}(l_{2,2},\bfs_{2,2})w_\la=0.
\]
Indeed, $[f_{1,2},e_{1,1}]=f_{2,2}$ and $[f_{1,2},e_{2,2}]=-f_{1,1}$. Therefore,
the PBW degree of $f_{1,2}[k] e_{1,1}(l_{1,1},\bfs_{1,1})e_{2,2}(l_{2,2},\bfs_{2,2})w_\la$
is at most $l_{1,1}+l_{2,2}-2$.
Now
using \eqref{rel} and the action $\msl_2[t]$ we obtain the desired claim thanks to Lemma \ref{sl2}.
\end{proof}

Now we give the general proof.
\begin{proof}
Because of Lemma \ref{dim} it suffices to prove that any vector in $W_\la$ can be written as a linear
combination of vectors \eqref{basis} subject to the conditions \eqref{1}, \eqref{n}, \eqref{1,n}.
First, let us restrict $W_\la$ to the subalgebra $\msl_{n-1}[t]$, corresponding to simple roots
$\al_1,\dots,\al_{n-1}$. Then we have all the relations from $W_{(m_2\om_n)^*}$ and hence we can assume
that all the conditions \eqref{1} hold. Similarly we can assume that restrictions \eqref{n} are satisfied.

We have
\begin{equation}\label{zero}
e_{1,n}[0]^{m}\prod_{j=1}^{n-1} e(l_{1,j},\bfs_{1,j})\prod_{i=2}^{n} e(l_{i,n},\bfs_{i,n})w_\la=0
\end{equation}
provided $m+\sum_{j=1}^{n-1} l_{1,j}+\sum_{i=2}^n l_{i,n}>m_1+m_2$.
In fact, we know that $e_{1,n}[0]^m w_\la=0$ if $m>m_1+m_2$; in addition
\begin{gather*}
\pa_{1,i-1}[k]^r e_{1,n}[0]^m = {\rm const}.\ e_{i,n}[k]^r e_{1,n}[0]^{m-r},\\
\pa_{j+1,n}[k]^r e_{1,n}[0]^m = {\rm const}.\ e_{1,j}[k]^r e_{1,n}[0]^{m-r}
\end{gather*}
which proves \eqref{zero}. The difference with the $\msl_3$ case is that $f^{gr}_\theta[k]$
does not always kill $\prod_{j=1}^{n-1} e(l_{1,j},\bfs_{1,j})\prod_{i=2}^{n} e(l_{i,n},\bfs_{i,n})w_\la$.
To resolve this difficulty we introduce a filtration $G_\bullet$ on $W_\la^{gr}$. The spaces $G_\nu$ 
of the filtration are
labeled by the weights $\nu\in P$; $G_\nu$ is spanned by the vectors of the form
\eqref{basis} subject to conditions \eqref{1} , \eqref{n} (but not \eqref{1,n}) such that the weight of 
\[
\prod_{k=1}^{n-1} e_{1,k}(l_{1,k},\bfs_{1,k})\prod_{k=2}^{n} e_{k,n}(l_{k,n},\bfs_{k,n})w_{\la}
\]
is at most $\la-\nu$. Then in the associated graded module we have the action of the algebra 
$\msl_2^\theta[t]$ and the argument of Lemma \ref{sl3} applies.
\end{proof}

\section{Nonsymmetric Macdonald polynomials}
\subsection{The Haglund-Haiman-Loehr formula}
Let $\lambda=(\lambda_1, \dots,\lambda_n)$ be an $n$-tuple of integers. The type $A$ nonsymmetric Macdonald polynomials
$E_\la(x,q,t)$ are polynomials in variables $x=(x_1,\dots,x_n)$ with coefficients in $\bQ(q,t)$.
They are simultaneous eigenfunctions of the Cherednik operators (see e.g. \cite{HHL}). In what follows
we need the following Knop-Sahi property of the Macdonald polynomials. Let
\begin{gather}
\label{pi}
\pi(\la_1,\dots,\la_n)=(\la_n+1,\la_1,\dots,\la_{n-1}),\\
(\Psi f)(x_1,\dots,x_n)=x_1f(x_2,\dots,x_n,q^{-1}x_1).
\end{gather}
Then $E_{\pi(\la)}(x,q,t)=q^{\la_n}\Psi E_\la(x,q,t)$.

We use the explicit combinatorial description of the nonsymmetric Macdonald polynomials from \cite{HHL}.
For a composition $\lambda=(\lambda_1, \dots,\lambda_n)$ the column diagram $dg'(\la)$ is the set
\[
dg'(\la)=\{(i,j)\in \bN^2:\ 1\le i\le n, 1\le j\le\la_i\}
\]
The augmented diagram $\widehat{dg}(\lambda)$ is defined by
\[
\widehat{dg}(\lambda)=dg'(\la)\cup\{(i,0): 1\le i\le n\},
\]
i.e. one box is added to the bottom of each column.
For example, for the composition $\lambda=(3,1,0,2,0,4)$ one has the following diagrams for 
$dg'(\lambda)$ and $\widehat{dg}(\lambda)$:
\[
\begin{picture}(140.00,130.00)(-5,00)

 \put(0.00,10.00){\line(1,0){120}}
 \put(0.00,30.00){\line(1,0){40}}
 \put(60.00,30.00){\line(1,0){20}}
 \put(100.00,30.00){\line(1,0){20}}

 \put(0.00,50.00){\line(1,0){20}}
 \put(0.00,70.00){\line(1,0){20}}
 \put(60.00,50.00){\line(1,0){20}}
 \put(100.00,50.00){\line(1,0){20}}
 \put(100.00,70.00){\line(1,0){20}}
 \put(100.00,90.00){\line(1,0){20}}

 \put(0.00,10.00){\line(0,1){60}}
 \put(20.00,10.00){\line(0,1){60}}
 \put(40.00,10.00){\line(0,1){20}}
 \put(60.00,10.00){\line(0,1){40}}
 \put(80.00,10.00){\line(0,1){40}}
 \put(100.00,10.00){\line(0,1){80}}
 \put(120.00,10.00){\line(0,1){80}}
\end{picture}\qquad 
\begin{picture}(140.00,130.00)(-5,00)

 \put(0.00,10.00){\line(1,0){120}}
 \put(0.00,30.00){\line(1,0){120}}
 \put(0.00,50.00){\line(1,0){40}}
 \put(60.00,50.00){\line(1,0){20}}
 \put(100.00,50.00){\line(1,0){20}}

 \put(0.00,70.00){\line(1,0){20}}
 \put(0.00,90.00){\line(1,0){20}}
 \put(60.00,70.00){\line(1,0){20}}
 \put(100.00,70.00){\line(1,0){20}}
 \put(100.00,90.00){\line(1,0){20}}
 \put(100.00,110.00){\line(1,0){20}}

 \put(0.00,10.00){\line(0,1){80}}
 \put(20.00,10.00){\line(0,1){80}}
 \put(40.00,10.00){\line(0,1){40}}
 \put(60.00,10.00){\line(0,1){60}}
 \put(80.00,10.00){\line(0,1){60}}
 \put(100.00,10.00){\line(0,1){100}}
 \put(120.00,10.00){\line(0,1){100}}
\end{picture}
\]
In what follows we will be mostly interested in anti-dominant diagrams, i.e. such that
$\la_i \leq \la_j$, if $i < j$.

A filling of $\la$ is the map $\sigma: dg'(\la)\to\{1,\dots,n\}$.
The associated augmented filling  $\widehat{\sigma}: \widehat{dg}(\lambda)\to\{1,\dots,n\}$
agrees with $\sigma$ on $dg'(\la)$ and $\widehat{\sigma}((j,0))=~j$ for $j=1,\dots,n$.

Two boxes are called attacking if they are in the same row or
they are in consecutive rows, and the box in the lower row is to the right of the one
in the upper row, i.e., they have the form $(i, j),(i_1, j-1)$ with $i < i_1$.
A filling is called non-attacking, if there are no equal elements in attacking boxes.

For a box $u$ let $d(u)$ be the box strictly below $u$. The descent of a filling
$\sigma$ is the set of  boxes $u$ such that $\widehat\sigma(u)>\widehat\sigma(d(u))$.

For a box $u=(i,j)$ let $leg(u)$ be the set of cells above $u$ and $l(u)=|leg(u)|=\la_i-j$.
We will also need the value $a(u)$ counting the cardinality of the arms of $u$.
Define:
\[arm^{left}(u)=\lbrace (i',j)\in dg'(\la)| i'<i, \lambda_{i'} \leq \lambda_i \rbrace.\]
\[arm^{right}(u)=\lbrace (i',j-1)\in \widehat{dg}(\la)| i'>i, \lambda_{i'} < \lambda_i \rbrace.\]
\[arm(u)=arm^{left}(u) \cup arm^{right}(u)\]
Then $a(u)=|\{arm(u)\}|$.

Note that for anti-dominant diagrams we have:
\[arm(u)=arm^{left}(u)=\lbrace (i',j)\in dg'(\la)| i'<i \rbrace.\]

Let $Des(\widehat\sigma)$ be the set of descents of $\widehat\sigma$ and
\[maj(\widehat\sigma)=\sum_{u \in Des(\widehat\sigma)}(l(u)+1).\]

A pair of attacking elements $u=(i,j)$ and $u'=(i', j')$ of $\widehat\sigma$ is an inversion if
$\widehat\sigma(u) < \widehat\sigma(u')$ and $j=j', i<i'$ or $j+1=j', i>i'$.
Let $Inv(\widehat\sigma)$ be the set of inversions of $\widehat\sigma$.
Let
\begin{equation}
coinv(\widehat\sigma)=\sum_{u \in dg'(\la)}a(u)-|Inv(\widehat\sigma)|+ 
|\lbrace (i<j:\lambda_i \leq \lambda_j) \rbrace|+\sum_{u \in Des(\widehat\sigma)}a(u).
\end{equation}
For a filling $\sigma$ of the diagram $dg'(\la)$ we define 
\[
x^\sigma=\prod_{i=1}^n x_i^{|\{u\in dg'(\la):\ \sigma(u)=i\}|}.
\]

\begin{thm}(Haglund, Haiman, and Loehr)
\begin{equation}\label{Macdonald}
E_\la(x; q,t)=\sum_{\sigma ~non-attacking}x^{\sigma}q^{maj (\widehat\sigma)}t^{coinv(\widehat\sigma)}
\prod_{\substack{u \in dg'(\la)\\ \widehat\sigma(u) \neq \widehat\sigma(d(u))}} \frac{1-t}{1-q^{l(u)+1}t^{a(u)+1}}.
\end{equation}
\end{thm}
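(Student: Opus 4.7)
The plan is to follow the strategy of the original proof of Haglund, Haiman, and Loehr: denote the right hand side of \eqref{Macdonald} by $\tilde E_\la$ and verify that it satisfies the same recursive characterization as $E_\la$. Recall that the nonsymmetric Macdonald polynomials are uniquely determined by three properties: (i) $E_{(0,\dots,0)}=1$; (ii) the Knop-Sahi cyclic recurrence $E_{\pi(\la)}=q^{\la_n}\Psi E_\la$ stated right after \eqref{pi}; and (iii) for each adjacent transposition $s_i$, a Demazure-Lusztig recurrence expressing the action of the Hecke generator $T_i$ on $E_\la$ as a prescribed combination of $E_\la$ and $E_{s_i\la}$ whose coefficients depend only on $\la_i$ vs.\ $\la_{i+1}$.

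The base case (i) is immediate: for $\la=(0,\dots,0)$ the diagram $dg'(\la)$ is empty, the unique non-attacking filling is empty, the augmented filling is the basement $j\mapsto j$, and both $maj$ and $coinv$ vanish, so $\tilde E_{(0,\dots,0)}=1$. For (ii) I would build an explicit weight-preserving bijection between non-attacking fillings of $dg'(\la)$ and of $dg'(\pi(\la))$ by cyclically rotating the columns and adjoining a new top box to what becomes the first column, with its entry dictated by $\widehat\sigma(n,\la_n)$; one then checks that $x^\sigma$, $maj(\widehat\sigma)$, $coinv(\widehat\sigma)$, and the arm-leg product transform exactly so as to reproduce the effect of $q^{\la_n}\Psi$.

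The core of the proof is (iii). Here I would construct, for each $i$, a sign-reversing involution on the set of non-attacking fillings that pairs configurations differing by swapping some occurrences of $i$ and $i+1$ in comparable columns, and then identify its fixed points with the terms that survive in $T_i\tilde E_\la$. A careful bookkeeping of how $maj(\widehat\sigma)$, $coinv(\widehat\sigma)$, the inversion set $Inv(\widehat\sigma)$, and the descent set $Des(\widehat\sigma)$ behave under this involution yields the desired Hecke recurrence. This bijective analysis is the main technical obstacle: the delicate interplay between changes of descent status at column tops and bottoms, the reshuffling of arms under the swap (splitting arms into left and right parts is precisely what makes this work), and the arm-leg product $\prod(1-t)/(1-q^{l(u)+1}t^{a(u)+1})$ is what forces the exact shape of the denominators in \eqref{Macdonald}, and verifying these cancellations is where essentially all of the labor of the proof is concentrated.
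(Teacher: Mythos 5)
You should first note that the paper does not prove this statement at all: it is imported verbatim from Haglund--Haiman--Loehr \cite{HHL} and used as a black box, so there is no internal proof to compare against. Your outline does correctly describe the strategy of the original HHL argument --- characterize $E_\la$ by the base case, the Knop--Sahi recurrence \eqref{pi}, and the Demazure--Lusztig ($T_i$) relations, then verify that the combinatorial sum satisfies all three --- and steps (i) and (ii) are plausible as sketched (the cyclic-rotation bijection for $\Psi$ is essentially the map $\pi(u)$ that the paper itself introduces later, and it does preserve arms and legs).

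However, as a proof the proposal has a genuine gap: step (iii) is where the entire content of the theorem resides, and you have not actually carried it out. You do not define the sign-reversing involution, do not specify its fixed points, and do not perform the case analysis of how $maj$, $coinv$, $Inv$, $Des$, and the product $\prod (1-t)/(1-q^{l(u)+1}t^{a(u)+1})$ transform under it; the sentence ``a careful bookkeeping \ldots yields the desired Hecke recurrence'' simply restates the claim to be proved. In HHL's paper this verification occupies the bulk of the argument and is delicate precisely because the arm statistic mixes cells to the left in the same row with cells to the right in the row below, so the swap of $i$ and $i+1$ changes attacking/inversion relations nonlocally. Until that involution is exhibited and the cancellations are checked, the proposal is an accurate roadmap of the known proof rather than a proof.
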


In our paper we only need the special compositions $\mu$ of the form 
$\mu=\pi^{r}(\lambda)$, where $\lambda=(\la_1\le\dots\le\la_n)$ is anti-dominant
and $\pi^r$ is the power of the operator \eqref{pi}.

For an anti-dominant $\lambda$ we have:
\begin{equation}
coinv(\widehat\sigma)=\sum_{u \in dg'(\la)}a(u)-|Inv(\widehat\sigma)|+ \frac{n(n-1)}{2}+\sum_{u \in Des(\widehat\sigma)}a(u).
\end{equation}

Note that we have $\frac{n(n-1)}{2}$ inversions in the bottom row of $\widehat{dg}(\la)$.
Let $Inv'(\widehat\sigma)=Inv(\widehat\sigma)-\frac{n(n-1)}{2}$.
We have
\begin{equation}
coinv(\widehat\sigma)=\sum_{u \in dg'(\la)}a(u)-|Inv'(\widehat\sigma)|+\sum_{u \in Des(\widehat\sigma)}a(u).
\end{equation}

\begin{rem}
We note that for non-attacking $\sigma$ there are no inversions in $\widehat\sigma$ between the cells in the first and zero rows.
\end{rem}

For a composition $\mu=\pi^{r}(\lambda)$ with antidominant $\la$ one has  
\[
|\lbrace (i<j:\mu_i \leq \mu_j) \rbrace|=\frac{r(r-1)}{2}+\frac{(n-r)(n-r-1)}{2}
\]
and hence
\begin{equation}\label{coinv}
coinv(\widehat\sigma)=\sum_{u \in dg'(\la)}a(u)-|Inv'(\widehat\sigma)|-r(n-r)+\sum_{u \in Des(\widehat\sigma)}a(u).
\end{equation}

\subsection{ $t\to\infty$ limit}
In this section we give a combinatorial formula for the $t\to\infty$ limit of the polynomials
$E_\mu(x; q,t)$.

We note that $\lim_{t \to \infty}E_\mu(x; q,t)=
\lim_{t \to \infty}\sum_{\sigma ~non-attacking} x^\sigma t^{T(\sigma)}q^{Q(\sigma)}$, where
\begin{gather}
T(\sigma)=coinv(\widehat\sigma) - \sum_{\substack{u \in dg'\\ \widehat\sigma(u) \neq \widehat\sigma(d(u))}} a(u),\\
Q(\sigma)= maj (\widehat\sigma) - \sum_{\substack{u \in dg'\\ \widehat\sigma(u) \neq \widehat\sigma(d(u))}} (l(u)+1)=
-\sum_{\substack{u \in dg'\\ \widehat\sigma(u) < \widehat\sigma(d(u))}} (l(u)+1).
\end{gather}


For any cell $u=(i,j) \in \widehat{dg}(\lambda)$ we define the cell $\pi(u)\in dg'\left(\pi(\lambda)\right)$ by
\[
\pi(u)=\begin{cases} (i+1,j),\text{ if } i\neq n,\\ (1,j+1), \text{ if } i=n.\end{cases}
\]
We note that $v \in arm(u)$ iff $\pi(v) \in arm(\pi(u))$ and  $v \in leg(u)$ iff $\pi(v) \in leg(\pi(u))$;
in particular, $a(u)=a(\pi(u))$ and  $l(u)=l(\pi(u))$.

The following lemma (even for general $\mu$) is known (see \cite{CO1}). However we give a combinatorial
proof below. The construction given in the proof will be very important later.

\begin{lem}\label{inequalitypoweroft}
Let $\mu=\pi^r(\la)$ and assume that $\la$ is antidominant. Then  $\lim_{t \rightarrow \infty}E_\mu(x; q,t)$ is well defined
polynomial in $x$ and $q^{-1}$.
\end{lem}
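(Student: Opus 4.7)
The plan is to show that for every non-attacking filling $\sigma$ both exponents $T(\sigma)$ and $Q(\sigma)$ appearing as powers of $t$ and $q$ after taking the $t\to\infty$ limit in the Haglund--Haiman--Loehr formula are non-positive. The bound $Q(\sigma)\le 0$ is immediate from its explicit expression as the negative of a sum of non-negative contributions, so the real content is $T(\sigma)\le 0$.

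Substituting the expression \eqref{coinv} for $coinv(\widehat\sigma)$ into the formula for $T(\sigma)$ and cancelling the contributions of all cells $u$ with $\widehat\sigma(u)\ne\widehat\sigma(d(u))$, one obtains
$$T(\sigma) = \sum_{u\in C(\sigma)} a(u) - |Inv'(\widehat\sigma)| - r(n-r),$$
where $C(\sigma)=\{u\in dg'(\mu):\widehat\sigma(u)=\widehat\sigma(d(u))\}$. Thus $T(\sigma)\le 0$ is equivalent to the combinatorial inequality $\sum_{u\in C(\sigma)} a(u)\le |Inv'(\widehat\sigma)|+r(n-r)$, which I would establish by an almost-injection from arm-pairs of $C(\sigma)$-cells into inversions of $\widehat\sigma$.

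The map $\phi$ is defined by a short case analysis. For $u=(i,j)\in C(\sigma)$ and $v\in arm(u)$, the cell $v$ is either $(i',j)\in arm^{left}(u)$ or $(i',j-1)\in arm^{right}(u)$. Non-attacking forces $\widehat\sigma(v)\ne\widehat\sigma(u)$, and since $\widehat\sigma(u)=\widehat\sigma(d(u))$ it follows in each of the four sub-cases that exactly one of the two attacking pairs $(v,u)$ or $(d(u),v)$ is an inversion of $\widehat\sigma$; take $\phi(u,v)$ to be this inversion. Injectivity on the preimages of $Inv'(\widehat\sigma)$ then follows by noting that the only two sub-cases which could produce any given image inversion turn out to impose mutually exclusive inequalities on the parts $\mu_i$ and $\mu_{i'}$ (one sub-case requires $\mu_{i'}\le\mu_i$ or $\mu_i\le\mu_{i'}$, the other the strict opposite).

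The only pairs whose image $\phi(u,v)$ lies inside the augmented bottom row, and is therefore not counted by $Inv'(\widehat\sigma)$, are those with $u=(i,1)$, $v=(i',0)\in arm^{right}(u)$ and $\widehat\sigma(v)>\widehat\sigma(u)$. Each such exceptional pair forces $i<i'$ and $\mu_i>\mu_{i'}$, and a direct count from $\mu=\pi^r(\lambda)$ with antidominant $\lambda$ gives $|\{i<i':\mu_i>\mu_{i'}\}|=r(n-r)$, matching exactly the offset needed. The step I expect to require the most care is precisely this book-keeping in the augmented row: verifying that $\phi$ fails to land in $Inv'(\widehat\sigma)$ for at most $r(n-r)$ pairs and that the case analysis for injectivity genuinely exhausts all coincidences. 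The explicit construction of $\phi$, not merely the inequality it yields, is what the authors emphasise will be essential later.
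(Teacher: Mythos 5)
There is a genuine gap, and it sits in the very first step: your reduction of $T(\sigma)$ is algebraically wrong. Substituting \eqref{coinv} into $T(\sigma)=coinv(\widehat\sigma)-\sum_{\widehat\sigma(u)\neq\widehat\sigma(d(u))}a(u)$, a descent cell (one with $\widehat\sigma(u)>\widehat\sigma(d(u))$) contributes $a(u)$ once through $\sum_{u}a(u)$, once more through $\sum_{u\in Des(\widehat\sigma)}a(u)$, and is subtracted only once in $\sum_{\widehat\sigma(u)\neq\widehat\sigma(d(u))}a(u)$; so it survives with coefficient $+1$, it does not cancel. The correct identity is
\[
T(\sigma)=\sum_{u:\ \widehat\sigma(u)\ge\widehat\sigma(d(u))}a(u)-|Inv'(\widehat\sigma)|-r(n-r),
\]
whereas you sum only over $C(\sigma)=\{u:\widehat\sigma(u)=\widehat\sigma(d(u))\}$. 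The inequality you then set out to prove, $\sum_{u\in C(\sigma)}a(u)\le |Inv'(\widehat\sigma)|+r(n-r)$, is strictly weaker than $T(\sigma)\le 0$ (it omits $\sum_{u\in Des(\widehat\sigma)}a(u)\ge 0$), so even if your injection $\phi$ works perfectly it does not yield the lemma.

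The repair is not difficult and brings you exactly onto the paper's argument: run the arm-pair-to-inversion map over all $u$ with $\widehat\sigma(u)\ge\widehat\sigma(d(u))$, not just the equality cells. For such $u$ and $v\in arm(u)$ one still has that \emph{at least} one of the attacking pairs $(v,u)$, $(d(u),v)$ is an inversion (since $\widehat\sigma(v)$ cannot be simultaneously $\ge\widehat\sigma(u)$ and $\le\widehat\sigma(d(u))$), but for genuine descents it is no longer ``exactly one'': when $\widehat\sigma(d(u))<\widehat\sigma(v)<\widehat\sigma(u)$ both pairs are inversions, so your injectivity analysis --- which leans on the dichotomy created by $\widehat\sigma(u)=\widehat\sigma(d(u))$ --- must be redone for these cells. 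Your treatment of $Q(\sigma)\le 0$ and the $r(n-r)$ bottom-row book-keeping (the count $|\{i<i':\mu_i>\mu_{i'}\}|=r(n-r)$) are consistent with the paper, which handles the boundary slightly differently by isolating the forced cells $(1,1),\dots,(r,1)$ with $\sigma(i,1)=i$ and summing their arms and mutual inversions directly; either accounting works once the main inequality is stated for the correct set of cells.
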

\begin{proof}
We have to prove that for any non-attacking filling $\sigma$ the power
\begin{equation}\label{degt}
coinv(\widehat\sigma)-\sum_{\substack{u \in dg'(\mu)\\ \widehat\sigma(u) \neq \widehat\sigma(d(u))}}{a(u)} 
\end{equation}
is less than or equal to zero.
Using \ref{coinv} we have:
\begin{multline*}
coinv(\widehat\sigma)-\sum_{\substack{u \in dg'(\mu)\\ \widehat\sigma(u) \neq \widehat\sigma(d(u))}}a(u)=\\
\sum_{u \in dg'(\mu)} a(u)-|Inv'(\widehat\sigma)|-r(n-r)+\sum_{u \in Des(\widehat\sigma)}a(u)-
\sum_{\substack{u \in dg'(\mu)\\ \widehat\sigma(u) \neq \widehat\sigma(d(u))}} {a(u)}=\\
\sum_{u \in dg'(\mu)}a(u)-|Inv'(\widehat\sigma)|-r(n-r)-
\sum_{\substack{u \in dg'(\mu)\\ \widehat\sigma(u) < \widehat\sigma(d(u))}}{a(u)}.
\end{multline*}

We consider three cells $u=\pi^r(\tilde{u})$, $d(u)$ and $v \in arm(u)$ (i.e. $v=\pi^r(\tilde{v})$
for some $\tilde v \in arm(\tilde u)$).
Assume that $\widehat\sigma(u)>\widehat\sigma(d(u))$. Then we have $\widehat\sigma(v)<\widehat\sigma(u)$ 
or $\widehat\sigma(v)>\widehat\sigma(d(u))$.
Hence if $\widehat\sigma(u)>\widehat\sigma(d(u))$ then for any element in $arm(u)$ we have at least one inversion.
Let $Inv^\pi$ be the set of inversions of the form $\pi^r(u), \pi^r(v)$.
We have:
\begin{multline*} \sum_{u=\pi^r(\tilde{u})}a(u)-|Inv^\pi(\sigma)|-
\sum_{\substack{u=\pi^r(\tilde{u})\\ \widehat\sigma(u) < \widehat\sigma(d(u))}}{a(u)} \leq\\
\sum_{u=\pi^r(\tilde{u})}a(u)-\sum_{\substack{u=\pi^r(\tilde{u})\\ \widehat\sigma(u) < \widehat\sigma(d(u))}}{a(u)}-
\sum_{\substack{u=\pi^r(\tilde{u})\\ \widehat\sigma(u) > \widehat\sigma(d(u))}}{a(u)}=0,
\end{multline*}

Elements in $dg'(\mu)$ that are not of the type $\pi^r(u)$ are $(1,1), \dots, (r,1)$. 
We note that if a filling
is non-attacking then $\sigma(i,1)=i, i=1, \dots, r$. Hence these elements produce $\frac{r(r-1)}{2}$
inversions.
Lengths of their arms are $n~-~1, \dots, n-r$. Summing up we obtain $\frac{r(r-1)}{2}+r(n-r).$
This completes the proof of Lemma.
\end{proof}

Lemma \ref{inequalitypoweroft} tells us that $T(\widehat\sigma)$ is less than or equal to zero. 
At the $t\to \infty$ limit  only the fillings with  $T(\widehat\sigma)=0$ survive.
Any three elements considered in the previous Lemma give one negative summand to the power of $t$.
Therefore if $\widehat\sigma(u)\ge\widehat\sigma(d(u))$ then for any element $v$ in the arm of
$u$ only one of the inequalities $\widehat\sigma(v)<\widehat\sigma(u)$ and 
$\widehat\sigma(v)>\widehat\sigma(d(u))$
holds, $\widehat\sigma(v)$ is not between $\widehat\sigma(u)$ and $\widehat\sigma(d(u))$. 
And if $\widehat\sigma(u)<\widehat\sigma(d(u))$ then for any $v$ in the arm of
$u$ non of these equations hold, i. e. $\widehat\sigma(v)$ is between $\widehat\sigma(u)$ and $\widehat\sigma(d(u))$.
We thus obtain the following description of fillings such that the power of $t$ \eqref{degt} vanishes.

\begin{prop}\label{ruleoffilling}
Let $\widehat\sigma$ be a non-attacking filling of the diagram $\widehat{dg}(\mu)$ such that
$T(\widehat{\sigma})=0$. Let $u\in dg'(\mu)$, $\widehat\sigma(d(u))=k$ and
let $S=\lbrace \sigma(v)| v \in arm(u) \rbrace \cup \lbrace \sigma(u) \rbrace.$ 
Then 
\[
\sigma(u)=\begin{cases}
\min_{x \in S, x \geq k}x, \text{ if } \exists x \in S, x \geq k,\\
\min_{x \in S} x,\text{ otherwise}.
\end{cases}
\]
\end{prop}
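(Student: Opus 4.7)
The plan is to read off the proposition directly from the case analysis immediately preceding it, together with the constraints imposed by the non-attacking condition. Write $s = \widehat{\sigma}(u)$ and $k = \widehat{\sigma}(d(u))$, and recall from the paragraphs above the proposition that the hypothesis $T(\widehat{\sigma})=0$ forces, for each $v \in arm(u)$:
\begin{itemize}
\item $\widehat{\sigma}(v) \notin (k,s)$ whenever $s \geq k$,
\item $\widehat{\sigma}(v) \in (s,k)$ whenever $s < k$.
\end{itemize}

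The preliminary step I would make explicit is the following non-attacking consequence: every arm cell $v$ attacks both $u$ and $d(u)$, so that $\widehat{\sigma}(v) \neq s$ and $\widehat{\sigma}(v) \neq k$. Indeed, for $v = (i',j) \in arm^{\mathrm{left}}(u)$, the pair $(v,u)$ is in the same row while $(v,d(u))$ sits in consecutive rows with $d(u)$ in the lower row at column $i > i'$; for $v = (i',j-1) \in arm^{\mathrm{right}}(u)$, the roles of $u$ and $d(u)$ are exchanged. In either case both pairs are attacking and the two inequalities follow.

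With this observation, I would split into three cases. If $s > k$, the trichotomy together with $\widehat{\sigma}(v) \neq k,s$ gives $\widehat{\sigma}(v) < k$ or $\widehat{\sigma}(v) > s$; hence $S \cap [k,n]$ contains $s$ and all its other elements are strictly larger than $s$, so $\min\{x \in S : x \geq k\} = s$. If $s = k$, non-attacking alone gives $\widehat{\sigma}(v) \neq k$, so every element of $S \cap [k,n]$ distinct from $s$ is at least $k+1 > s$, and again the minimum equals $s$. If $s < k$, the trichotomy forces every $\widehat{\sigma}(v) \in (s,k)$, so $S \cap [k,n] = \emptyset$ and the rule reduces to $\sigma(u) = \min S$, which equals $s$ because $s < \widehat{\sigma}(v)$ for every $v \in arm(u)$.

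The only place where the argument would fail without an extra input is the case $s > k$: here the trichotomy by itself permits $\widehat{\sigma}(v) = k$, which would insert a value smaller than $s$ into $S \cap [k,n]$ and break the formula. The crucial — and essentially only nontrivial — point of the proof is therefore the non-attacking observation above, which rules out precisely this possibility. Everything else is straightforward bookkeeping.
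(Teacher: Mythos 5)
Your proof is correct and follows essentially the same route as the paper, which obtains the proposition directly from the equality case of the $T(\widehat\sigma)\le 0$ analysis (the trichotomy on where $\widehat\sigma(v)$ may lie relative to $\widehat\sigma(u)$ and $\widehat\sigma(d(u))$). You usefully make explicit the non-attacking step $\widehat\sigma(v)\neq k$, which the paper leaves implicit but which is indeed needed to exclude $\widehat\sigma(v)=k$ in the case $\widehat\sigma(u)>\widehat\sigma(d(u))$.
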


Using the previous proposition we can obtain the following description of all
fillings of $\widehat{dg}(\mu)$, $\mu=\pi^r(\la)$, such that $T(\widehat\sigma)=0$.
We denote by $dg'_j(\la)$ the $j$-th row of the diagram.

\begin{lem}
Assume that $T(\widehat\sigma)=0$. Then the filling of
$\pi^r(dg'_j(\la))$ and the set $S=\{\sigma(u), u\in \pi^r(dg'_{j+1}(\la))\}$ 
uniquely determines the filling of $\pi^r(dg'_{j+1}(\la))$. 
We fill $\pi^r(dg'_{j+1}(\la))$ cell by cell from $\pi^r(n,j+1)$ to $\pi^r(1,j+1)$. 
Given a cell $v\in \pi^r(dg'_{j+1}(\la))$, the value $\sigma(v)$ is determined as follows:
let $S'$ be the set of elements of $S$ that are not used at the previous steps. 
Then $\sigma(v)$ is equal to:

(i) $\min\lbrace a \in S', a \geq \sigma(d(v))\rbrace$, if $\lbrace a \in S', a \geq \sigma(d(v))\rbrace\neq \emptyset$;

(ii) $\min\lbrace a \in S' \rbrace$, if $\lbrace a \in S', a \geq \sigma(d(v))\rbrace= \emptyset$.

\end{lem}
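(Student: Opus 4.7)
The plan is to read the lemma as a refinement of Proposition \ref{ruleoffilling}: rather than merely asserting that each cell receives a value determined by its arm and the cell directly below it, we now claim that the filling of row $j+1$ can be reconstructed from the multiset of values used in that row together with the filling of row $j$, provided one processes the cells of row $j+1$ in the right-to-left order $\pi^r(n,j+1),\pi^r(n-1,j+1),\dots,\pi^r(1,j+1)$.

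The main structural input I will use is that, for the anti-dominant $\lambda$, one has $arm((i,j+1))=\{(i',j+1)\in dg'(\lambda):i'<i\}$, together with the identity $arm(\pi(u))=\pi(arm(u))$ from the discussion preceding Lemma \ref{inequalitypoweroft}. Consequently, for $u=\pi^r(i,j+1)$, the multiset $\{\sigma(w):w\in arm(u)\}$ consists precisely of the values placed in row $j+1$ to the left of $u$, i.e.\ at the cells $\pi^r(i',j+1)$ with $i'<i$.

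The argument then proceeds by induction on $i$ running from $i=n$ down to $i=1$. At the inductive step I may assume that the values at $\pi^r(i+1,j+1),\dots,\pi^r(n,j+1)$ have already been correctly determined. The multiset $S'$ of values from $S$ which have not been used in previous steps therefore equals the multiset of values at the cells $\pi^r(i',j+1)$ for $i'\le i$. By the observation of the previous paragraph, this is exactly the set $\{\sigma(w):w\in arm(u)\}\cup\{\sigma(u)\}$ that appears in Proposition \ref{ruleoffilling}. Substituting this identification into the formula of Proposition \ref{ruleoffilling} yields exactly the two-case rule (i)--(ii) of the lemma.

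The only mildly delicate point is to check that the quantity $\sigma(d(u))$ entering the formula is indeed available at the moment we treat $u$: since $d(\pi^r(i,j+1))=\pi^r(i,j)$ lies in row $j$, whose filling is part of the input to the procedure, this causes no problem. I do not anticipate any serious obstacle here; the content of the lemma is essentially combinatorial bookkeeping that translates the pointwise characterization of Proposition \ref{ruleoffilling} into a row-by-row recursion via the chosen right-to-left traversal.
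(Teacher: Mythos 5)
Your argument is correct and takes essentially the same route as the paper, which disposes of this lemma with the single line ``Immediate consequence of Proposition \ref{ruleoffilling}''; your downward induction on the column index, identifying the unused set $S'$ with $\{\sigma(w):w\in arm(v)\}\cup\{\sigma(v)\}$ via $arm(\pi^r(u))=\pi^r(arm(u))$ and $d\circ\pi^r=\pi^r\circ d$, is exactly the bookkeeping the authors leave implicit. The only detail worth adding is that the cells of $\pi^r(dg'_{j+1}(\la))$ are pairwise attacking, so a non-attacking filling assigns them distinct values and removing ``used'' values really does leave the values of the unprocessed cells.
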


\begin{proof}
Immediate consequence of Proposition \ref{ruleoffilling}.
\end{proof}

In what follows we call the fillings $\sigma(\la)$ such that $T(\widehat\sigma)=0$ {\it appropriate}. 

\begin{prop}\label{fillingdegq}
Let $\la=(\lambda_1\leq \lambda_2 \leq \dots \leq \lambda_{n})$ be an antidominant weight.
Let $\widehat\sigma$ be a filling of the diagram $\widehat{dg}(\la)$ such that
the elements of the lowest row are $\sigma(i,0)=i$. Then there is only one appropriate 
filling with the fixed sets of elements in every row. For such a filling $\widehat\sigma$
one has
\[
Q(\sigma)=-\sum_{\widehat\sigma(u)<\widehat\sigma(d(u))} (l(u)+1).
\]
\end{prop}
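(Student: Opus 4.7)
The plan is to separate the proposition into its two logically independent claims: uniqueness of the appropriate filling given the row-multisets, and the closed formula for $Q(\sigma)$. The first claim is the substantive one; the second is a bookkeeping identity that falls out of the definitions.

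For the uniqueness I would induct on the row index $j$, building the filling from the bottom up. The row $j=0$ is prescribed by hypothesis, namely $\sigma(i,0)=i$. At the inductive step I assume the filling of row $j$ has been determined and that the multiset $S$ of values assigned to row $j+1$ is fixed. The preceding Lemma then supplies a deterministic right-to-left procedure producing the filling of row $j+1$: at each cell $v$ one selects either $\min\{a\in S':a\ge\sigma(d(v))\}$ when this set is nonempty, or $\min S'$ otherwise, where $S'$ is the running list of unused values. Since $\la$ is antidominant (so $r=0$ in the $\pi^r$-notation of the lemma), no reindexing is needed. Hence any two appropriate fillings with identical row-multisets must agree on every row, proving uniqueness.

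For the $Q(\sigma)$ formula, I would simply unfold the definitions from the opening of Section~2.2. Since by definition $Des(\widehat\sigma)=\{u:\widehat\sigma(u)>\widehat\sigma(d(u))\}$, one has
\[
maj(\widehat\sigma)=\sum_{\widehat\sigma(u)>\widehat\sigma(d(u))}(l(u)+1).
\]
Substituting this into
\[
Q(\sigma)=maj(\widehat\sigma)-\sum_{\widehat\sigma(u)\neq\widehat\sigma(d(u))}(l(u)+1)
\]
and splitting the second sum according to whether $\widehat\sigma(u)>\widehat\sigma(d(u))$ or $\widehat\sigma(u)<\widehat\sigma(d(u))$, the descent contributions cancel and one is left with $-\sum_{\widehat\sigma(u)<\widehat\sigma(d(u))}(l(u)+1)$, as claimed.

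The only real obstacle is to confirm that the right-to-left filling algorithm of the preceding Lemma is genuinely well-defined at every step (the inventory $S'$ must contain a valid choice, and $d(v)$ must already have been filled). The second point is automatic because we process rows upward, and the first is built into the statement of the preceding lemma once one uses the fixed bottom row $\sigma(i,0)=i$ as the base case of the induction. Thus no combinatorial estimate is needed beyond what has already been established.
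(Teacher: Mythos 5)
Your proposal is correct and follows exactly the route the paper intends: the paper gives no explicit proof of this proposition, treating the uniqueness as an immediate row-by-row consequence of the preceding lemma (which is itself a reformulation of Proposition \ref{ruleoffilling}), and the formula for $Q(\sigma)$ is already derived for arbitrary non-attacking fillings in the displayed definition of $Q$ at the start of \S 2.2, by the same cancellation of the descent terms that you carry out. Your added check that the right-to-left algorithm is well-defined at every step (nonempty inventory $S'$, lower cell already filled because rows of an antidominant diagram are nested) is a harmless and correct elaboration of what the paper leaves implicit.
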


In particular, we obtain the following well-known corollary:
\begin{cor}
Let $m_i=\la_{n+1-i}-\la_{n-i}$. Then 
\[
E_{\lambda}(x; 1, \infty)=\ch \bigotimes_{i=1}^{n-1} V_{\om_i}^{m_{i+1}-m_i},
\]
where $V_{\om_i}$ are fundamental representations.
\end{cor}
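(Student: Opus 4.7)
The plan is to evaluate $E_\la(x; q, t)$ at $q = 1$, $t = \infty$ directly from the Haglund--Haiman--Loehr expansion and factorise the resulting sum row by row. By Lemma \ref{inequalitypoweroft}, only the appropriate fillings survive the $t \to \infty$ limit; specialising $q = 1$ also removes the $q^{Q(\sigma)}$ factor, leaving
\[
E_\la(x; 1, \infty) = \sum_{\sigma \text{ appropriate}} x^\sigma.
\]

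Next, I would invoke Proposition \ref{fillingdegq}, which identifies appropriate fillings of $\widehat{dg}(\la)$ (with the forced bottom row $\sigma(i,0) = i$) with sequences of subsets $(S_1, \dots, S_{\la_n})$, $S_j \subseteq \{1, \dots, n\}$, $|S_j| = |dg'_j(\la)|$, where $S_j$ is the set of labels appearing in row $j$. Because the monomial $x^\sigma = \prod_j \prod_{i \in S_j} x_i$ depends only on this sequence of subsets, the sum factorises as
\[
E_\la(x; 1, \infty) = \prod_{j=1}^{\la_n} \sum_{\substack{S_j \subseteq \{1,\dots,n\} \\ |S_j| = |dg'_j(\la)|}} \prod_{i \in S_j} x_i = \prod_{j=1}^{\la_n} e_{|dg'_j(\la)|}(x_1, \dots, x_n),
\]
where $e_k$ denotes the $k$-th elementary symmetric polynomial.

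The final step is a row-length count. Since $\la$ is antidominant, row $j$ consists precisely of the cells $(i,j)$ with $\la_i \geq j$, so $|dg'_j(\la)| = |\{i : \la_i \geq j\}|$; as $j$ varies from $1$ to $\la_n$, the number of rows of length exactly $k$ equals $\la_{n+1-k} - \la_{n-k} = m_k$. Combining this with the classical identity $\ch V_{\om_k} = e_k(x_1,\dots,x_n)$ then rewrites the product above as the character of the claimed tensor product of fundamental representations.

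The step that requires the most care is the row-by-row factorisation, which uses not only the uniqueness of the appropriate filling associated with a subset-tuple (the content of Proposition \ref{fillingdegq}) but also its existence for every choice of subsets of the prescribed cardinalities. The latter is obtained by iterating the greedy construction of the lemma preceding Proposition \ref{fillingdegq}, and the antidominance of $\la$ ensures at each step that the assignment dictated by the greedy rule automatically satisfies the non-attacking condition; this is a straightforward induction on $j$, but is the single combinatorial point where the antidominance hypothesis genuinely enters.
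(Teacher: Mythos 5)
Your argument is correct and is essentially the paper's own proof: both pass to the appropriate fillings surviving at $t=\infty$, use Proposition \ref{fillingdegq} to identify them with tuples of row-subsets, and match each row of length $k$ with a basis vector of $V_{\om_k}$, so that at $q=1$ the sum factorises into a product of elementary symmetric polynomials, one factor $e_k$ for each of the $\la_{n+1-k}-\la_{n-k}$ rows of length $k$ (note this yields exponent $m_k$ on $V_{\om_k}$, consistent with the paper's own proof; the exponent $m_{i+1}-m_i$ printed in the statement appears to be an indexing typo). Your additional observation that one needs existence, not merely uniqueness, of the appropriate filling for each choice of subsets, and that antidominance makes the greedy construction automatically non-attacking, is a point the paper leaves implicit but which you handle correctly.
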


\begin{proof}
An appropriate filling $\widehat\sigma$ contains a set of different elements in each row of $dg'(\la)$. 
Thanks to Proposition \ref{fillingdegq} we have a bijection
between rows of our filling and elements of the standard basis of $V_{\om_k}$. The weight of the corresponding 
element of the basis equals to $\prod_{i=k+1}^n x^{\sigma(i,j)}$.
\end{proof}

\subsection{Recurrent formula}
Let $\la$ be an antidominant composition such that 
$\lambda_1= \dots = \lambda_{n-s}=0 \neq \lambda_{n-s+1}$
(i.e. there are $s$ cells in $dg'(\la)$).
Let $\ba=(a_{n-s+1}, \dots, a_n)$ be elements of the lowest row of an appropriate filling $\sigma$,  
i.e. the rule of Proposition \ref{ruleoffilling} is satisfied (we assume that 
$a_j$ lives in the $j$-th column). 
We denote the lowest row of $\sigma$ by ${\rm low}(\sigma)$ and define 
\[
k(\sigma)=(k_1, \dots, k_n),\ k_i=|\lbrace u|\sigma(u)=i \rbrace|.
\]
Recall that 
\[
E_\la(x,q^{-1},\infty)=\sum_{\sigma:\ appropriate} q^{Q(\sigma)}x^{k(\sigma)}.
\]

We introduce the notation which we extensively use below: 
\begin{equation}\label{definitionc}
c_{\ba}^{\lambda}(\bk)=
\sum_{\substack{\sigma:\ {\rm low}(\sigma)=(a_1, \dots, a_s),\\ k(\sigma)=(k_1, \dots, k_n)}} q^{Q(\sigma)}.
\end{equation}

Using Proposition \ref{ruleoffilling} we have:
\begin{multline*}
E_{\lambda}(x; q^{-1},\infty)=\\
\sum_{k_1, \dots, k_n \ge 0} c_{(n-s+1, \dots, n)}^{(0, \dots, 0, \lambda_{n-s+1}+1, \dots, \lambda_n+1)}
(k_1, \dots, k_{n-s}, k_{n-s}+1, \dots, k_n+1)x_1^{k_1} \dots  x_n^{k_n}.
\end{multline*}

Let $s'$ be the number of cells in the lowest but one row. For a string $\ba=(a_{n-s+1}, \dots, a_n)$ 
and a set $B$, $|B|=s'\le s$,
let $\ba\left(B\right)=(b_{n-s'+1},\dots,b_n)$
be an ordering of $B$ obtained using the rule of Lemma  \ref{ruleoffilling}.

\begin{prop}\label{mostgeneralrecurrence}
Let $\delta_1,\dots,\delta_n$ be the numbers defined by 
\[
\delta_i=\begin{cases} 1, \text{ if } i \in \lbrace a_{n-s+1}, \dots, a_n \rbrace,\\ 0 \text{ otherwise}. \end{cases} 
\]
Then 
\begin{multline*}
c_{\ba}^{(0, \dots, 0, \lambda_{n-s+1}+1, \dots, \lambda_n+1)}
(k_1+\delta_1, \dots, k_n+ \delta_n)=\\
\sum_{B:\ |B|=s'}c_{\ba(B)}^{\lambda}(k_1, \dots, k_n)q^{\sum_{j:\ b_j <a_j}\lambda_j}.
\end{multline*}
\end{prop}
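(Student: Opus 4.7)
The plan is to derive the recursion by stripping off the bottom row of $\lambda'$. An appropriate filling $\sigma'$ of $\lambda'$ with fixed bottom row $\ba$ is uniquely determined, via the rule of Proposition \ref{ruleoffilling}, by the sequence of value-sets of rows $2, 3, \dots$ of $\lambda'$. The second row of $\lambda'$ has $s$ cells, but the third row (when present) has only $s'$ cells; the rightmost $s'$ cells of row $2$ are exactly those sitting above the ``tall'' columns of $\lambda$ (those of height at least $2$), and their value-set is the parameter $B$ of size $s'$ summed over on the right-hand side.

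The key structural ingredient is a restriction property for the filling rule: since the rule processes cells right-to-left, the values at the rightmost $s'$ cells of row $2$ depend only on $B$ and on the entries $(a_{n-s'+1}, \dots, a_n)$ of $\ba$, producing the ordering $\ba(B)$; the leftmost $s-s'$ cells of row $2$ do not influence them. This yields a bijection between appropriate fillings $\sigma'$ of $\lambda'$ (with bottom $\ba$ and with row $2$ having value-set $B \cup C$ for some $(s-s')$-subset $C$ disjoint from $B$) and pairs consisting of a filling of the $s-s'$ extra cells of row $2$ together with an appropriate filling of $\lambda$ whose structure above its first row mirrors the structure of $\sigma'$ above row $2$. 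Summing over $C$ and the extra-cell choices produces $c_{\ba(B)}^\lambda(\bk)$, once the weights are reconciled.

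For the weight analysis I will decompose $Q(\sigma')$ into contributions from (i) descents in the bottom row of $\lambda'$, (ii) descents among the $s-s'$ extra cells of row $2$, and (iii) descents in the remaining cells. Summing (i) and (ii) over the values at the extra cells collapses them into a controllable factor, while (iii) matches $Q(\sigma)$ for the corresponding filling $\sigma$ of $\lambda$ up to a correction term. This correction arises because in $\sigma$ the descents at row-$1$ cells are measured against the augmented row $(1, \dots, n)$, while in $\sigma'$ the corresponding cells in row $2$ are measured against $\ba$: at each position $j$ with $b_j < a_j$, shifting the below-value from $j$ down to $a_j$ removes one potential descent, so the $Q$-contribution is shifted by $+\lambda_j$. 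This produces exactly the advertised factor $q^{\sum_{j:b_j<a_j}\lambda_j}$.

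The main obstacle is the weight bookkeeping, especially verifying that the contributions from the $s-s'$ extra cells of row $2$ combine cleanly with the bottom-row descents, after summing over their possible values, to leave no residual term beyond the advertised factor. A subsidiary difficulty is establishing the restriction property rigorously: one must show by induction on the right-to-left filling step that the value chosen at each of the rightmost $s'$ positions of row $2$ of $\lambda'$ automatically lies in $B$, using the appropriateness of $\ba$ to rule out configurations in which a ``short-column'' value would intrude into the ``tall-column'' positions.
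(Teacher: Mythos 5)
Your top-level strategy --- peel off the bottom row of $\lambda'=(0,\dots,0,\lambda_{n-s+1}+1,\dots,\lambda_n+1)$, invoke the determinacy of appropriate fillings (Proposition \ref{ruleoffilling} and the lemma following it) to get a bijection, and track the change of the $q$-exponent --- is exactly what the paper's one-line proof (``immediate consequence of Proposition \ref{fillingdegq}'') relies on. But your execution rests on a misreading of the combinatorial setup, and the two places you flag as ``obstacles'' are precisely where that misreading would make the argument fail. There are no ``extra cells'' in row $2$ of $\lambda'$: since $\lambda_{n-s+1}\neq 0$, every one of the $s$ nonzero columns of $\lambda'$ has height at least $2$, so row $2$ of $\lambda'$ is, in its entirety, the bottom row of the copy of $dg'(\lambda)$ sitting above row $1$, and $B$ is its full value set (in the paper's $n=3$ example $B$ runs over all $2$-element subsets of $\{1,2,3\}$, independently of $m_1,m_2$). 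This is forced by the content bookkeeping: the arguments on the right-hand side are $(k_1,\dots,k_n)$ with no $\delta$-shift, so every cell of $\lambda'$ outside row $1$ must be accounted for inside $c^{\lambda}_{\ba(B)}(\bk)$; under your reading, with $|B|<s$, the string $\ba(B)$ would not even specify a full bottom row of $\lambda$, and your sum over the phantom subsets $C$ and ``extra-cell choices'' has nothing to collapse into. The ``restriction property'' you propose to prove by induction on the right-to-left filling steps is likewise a non-issue: the whole of row $2$ is determined by $\ba$ and $B$, which is exactly the content of the lemma after Proposition \ref{ruleoffilling}.

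The weight analysis is also off. As the coefficients $c^{\mu}_{\ba}(\bk)$ are actually used (compare Proposition \ref{interpretationscrollledc} at $r=0$ and the $n=3$ example), they record only the descents at cells lying strictly above the bottom row of $dg'(\mu)$; the bottom row plays the role of the augmented row and enters only through the content shift $\delta$. Consequently the factor $q^{\sum_{j:\,b_j<a_j}\lambda_j}$ is simply the total contribution of the row-$2$-against-row-$1$ descents of $\sigma'$, namely $\sum_{j:\,b_j<a_j}\bigl(l((j,2))+1\bigr)=\sum_{j:\,b_j<a_j}(\lambda'_j-1)=\sum_{j:\,b_j<a_j}\lambda_j$, which has no counterpart in $c^{\lambda}_{\ba(B)}$, while all higher descents match cell for cell. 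Your version --- a correction equal to the difference between the descents of row $1$ of $\lambda$ measured against $(1,\dots,n)$ and the same cells measured against $\ba$ --- would instead produce $\sum_{j:\,b_j<a_j}\lambda_j-\sum_{j:\,b_j<j}\lambda_j$ together with a residual term from the bottom row of $\lambda'$ measured against $(1,\dots,n)$; these do not cancel and do not yield the advertised factor (already for $n=2$, $\lambda=(0,m)$, $\ba=(1)$ the discrepancy is visible). So the gap is not merely ``bookkeeping to be verified'': the decomposition you propose is the wrong one, and identifying exactly which descents the coefficients $c$ encode is the missing ingredient that makes the recursion immediate.
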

\begin{proof}
This is an immediate consequence of Theorem \ref{fillingdegq}.
\end{proof}

\begin{example}
Consider the case $n=3$ and partitions of the type $\lambda=(0,m_2,m_1+m_2)$ (this is the general case for $\msl_3$).
Then we obtain the following equations:
\[
c_{(2,1)}^{(0,m_2+1,m_1+m_2+1)}(k_1+1, k_2+1, k_3)=c_{(2,1)}^{(0,m_2,m_1+m_2)}(k_1, k_2, k_3)+\]
\[+c_{(3,1)}^{(0,m_2,m_1+m_2)}(k_1, k_2, k_3)+c_{(3,2)}^{(0,m_2,m_1+m_2)}(k_1, k_2, k_3);
\]
\[
c_{(3,1)}^{(0,m_2+1,m_1+m_2+1)}(k_1+1, k_2, k_3+1)=c_{(2,1)}^{(0,m_2,m_1+m_2)}(k_1, k_2, k_3)q^{m_2}+\]
\[+c_{(3,1)}^{(0,m_2,m_1+m_2)}(k_1, k_2, k_3)+c_{(3,2)}^{(0,m_2,m_1+m_2)}(k_1, k_2, k_3).
\]

Using these equations we obtain:
\begin{multline}\label{c31forsl3}
c_{(3,1)}^{(0,m_2+1,m_1+m_2+1)}(k_1+1, k_2, k_3+1)=\\
c_{(2,1)}^{(0,m_2+1,m_1+m_2+1)}(k_1+1, k_2+1, k_3)-
(1-q^{m_2})c_{(2,1)}^{(0,m_2,m_1+m_2)}(k_1, k_2, k_3).
\end{multline}
\end{example}

\begin{prop}\label{transpositioncolumns}
Let $\delta_j$ be as in Proposition \ref{mostgeneralrecurrence} and assume that $\lambda_{n-i}=\lambda_{n-i+1}$ for some $i$. 
Then:
\begin{multline*}
c_{(a_{n-s+1}, \dots,a_{n-s+i}, a_{n-s+i+1}, \dots, a_n)}^{(0, \dots, 0, \lambda_{n-s+1}+1, \dots, \lambda_n+1)}
(k_1+\delta_1, \dots, k_n+ \delta_n)=\\
c_{(a_{n-s+1}, \dots,a_{n-s+i+1}, a_{n-s+i}, \dots, a_n)}^{(0, \dots, 0, \lambda_{n-s+1}+1, \dots, \lambda_n+1)}
(k_1+\delta_1, \dots, k_n+ \delta_n).
\end{multline*}
\end{prop}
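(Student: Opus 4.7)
The plan is to construct an explicit weight-preserving bijection $\Phi$ between the sets of appropriate fillings counted on the two sides of the claimed equality. Reading the hypothesis as saying that the two columns of $dg'(\tilde{\lambda})$ whose bottom-row entries are being transposed have equal heights (where $\tilde{\lambda}=(0,\ldots,0,\lambda_{n-s+1}+1,\ldots,\lambda_n+1)$), the natural candidate for $\Phi$ is the involution swapping the entire contents of those two columns throughout $dg'(\tilde{\lambda})$, while leaving the augmented bottom row $(1,\ldots,n)$ untouched.

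I would first check that $\Phi$ sends appropriate fillings to appropriate fillings and preserves the value distribution $k(\sigma)$. Non-attackingness is manifestly preserved, since the swap permutes cells within each row without creating new attacking configurations. For the rule of Proposition \ref{ruleoffilling}, I would induct row by row using the Lemma that follows that proposition: the appropriate filling of each row is determined uniquely by the filling of the row below together with the multiset of its values, both of which are preserved by $\Phi$; the equal-height hypothesis is essential so that $\Phi$ is a shape-preserving involution and so that its restriction to each row still interacts consistently with the rule at the interface with row $0$. Conservation of $k(\sigma)$ is immediate since $\Phi$ only rearranges cells.

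For the $q$-weight $Q(\sigma)=-\sum_{\widehat\sigma(u)<\widehat\sigma(d(u))}(l(u)+1)$, the contributions from pairs $(u,d(u))$ with $u$ in row $j\geq 2$ of the two swapped columns are preserved individually, because $\Phi$ carries both cells into the sibling column and the leg length $l(u)=\lambda_{\mathrm{col}(u)}-j$ takes equal values in the two columns by hypothesis. The two interface pairs at row $1$ require separate analysis, since $d(u)$ lies in the fixed augmented row and does not move with $u$. The main obstacle is precisely this step: the local sum $\mathbf{1}[a_c<c]+\mathbf{1}[a_{c'}<c']$ is not in general equal to its swapped counterpart $\mathbf{1}[a_{c'}<c]+\mathbf{1}[a_c<c']$, because the augmented base values $c\neq c'$ break the naive symmetry. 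The resolution is that the rule of Proposition \ref{ruleoffilling} applied at the two interface cells, together with the equal-height identity $l((c,1))=l((c',1))$, restricts the configurations of $(a_c,a_{c'})$ to exactly those for which the two indicator sums coincide; a short case analysis on the ordering of $a_c,a_{c'}$ relative to $c,c'$ then finishes the proof.
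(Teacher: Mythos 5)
Your overall strategy --- a weight-preserving bijection between the two sets of fillings --- is the right kind of idea, but the specific map you propose, swapping the \emph{entire} contents of the two equal-height columns, does not send appropriate fillings to appropriate fillings, and this is exactly the step you pass over with ``both of which are preserved by $\Phi$''. The filling of row $j+1$ is determined by the value set of row $j+1$ together with the \emph{ordering} of row $j$; after you transpose two entries of row $j$, the rule of Proposition \ref{ruleoffilling} does not in general reproduce the transposed row $j+1$: it produces either the transposed row or the \emph{unchanged} row, depending on how the values of row $j+1$ interleave with the two swapped entries. Concretely, take $n=4$, diagram $(0,0,2,2)$, bottom row $(a_3,a_4)=(2,1)$, and second-row value set $\{3,4\}$. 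The rule gives second row $(4,3)$ (in columns $3,4$). For the transposed bottom row $(1,2)$ the rule \emph{again} gives $(4,3)$, not $(3,4)$; and $(3,4)$ sitting over $(1,2)$ violates the rule at the cell $(4,2)$, since the minimum of $\{3,4\}$ that is $\geq 2$ is $3$, not $4$. So your $\Phi$ carries an appropriate filling to a non-appropriate one and is not a bijection between the two sets.

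The repair is precisely the dichotomy just described, and it is what the paper's proof is organized around: for every value set $B$ of the next row, the induced orderings $\ba(B)$ and $(t_{n-i}\ba)(B)$ either coincide (when every $b\in B$ lies on the same side of both swapped entries, conditions \eqref{either} or \eqref{or}) or differ by the same adjacent transposition; in both cases the exponent $\sum_{j:\,b_j<a_j}\lambda_j$ is unchanged because $\lambda_{n-i}=\lambda_{n-i+1}$. One then peels off the bottom row via the recurrence of Proposition \ref{mostgeneralrecurrence} and concludes by induction on $|\lambda|$; the bijection implicit in that induction swaps the two columns only up to the first row at which the induced orderings coincide and is the identity above it. Your ``main obstacle'' at the interface with the augmented row $0$ is, by contrast, not where the difficulty sits: in the form in which the coefficients $c_{\ba}^{\lambda}$ enter the recurrence, the row-$0$/row-$1$ descents are accounted for outside of $c$, and the only $q$-bookkeeping needed inside the induction is the row-to-row exponent just mentioned.
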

\begin{proof}
We prove proposition  by induction on the number of cells in $\lambda$ using Proposition \ref{mostgeneralrecurrence}.

Assume that the proposition is proved for all partitions $\lambda'$ such that $|\lambda'|<|\lambda|$.
For $\ba=(a_{n-s+1}, \dots,a_{n-i}, a_{n-i+1}, \dots, a_n)$ we define 
\[
t_{n-i}\ba=(a_{n-s+1}, \dots,a_{n-i+1}, a_{n-i}, \dots, a_n).
\]
For a subset $B\subset \lbrace 1, \dots, n \rbrace$, $|B|=s' \leq s$ we define 
\[
\ba(B)=(b_{n-s'+1},\dots, b_n),\quad (t_{n-i}\ba)(B)=(\tilde b_{n-s'+1},\dots, \tilde b_n)
\]
Using Proposition \ref{mostgeneralrecurrence} we have:
\begin{multline*}
c_{\ba}^{(0, \dots, 0, \lambda_{n-s+1}+1, \dots, \lambda_n+1)}
(k_1+\delta_1, \dots, k_n+ \delta_n)=\\
\sum_{B:\ |B|=s'}c_{\ba(B)}^{\lambda}(k_1, \dots, k_n)q^{\sum_{j:\ b_j <a_j}\lambda_j};\\
\shoveleft c_{t_{n-i}\ba}^{(0, \dots, 0, \lambda_{n-s+1}+1, \dots, \lambda_n+1)}
(k_1+\delta_1, \dots, k_n+ \delta_n)=\\
\sum_{B:\ |B|=s'}c_{(t_{n-i}\ba)(B)}^{\lambda}(k_1, \dots, k_n)q^{\sum_{j:\ \tilde b_j < (t_{n-i}\ba)_j}\lambda_j}.
\end{multline*}

Our goal is to prove that 
\[
c_{(t_{n-i}\ba)(B)}^{\lambda}(k_1, \dots, k_n)q^{\sum_{j:\ b_j <a_j}\lambda_j}=
c_{\ba(B)}^{\lambda}(k_1, \dots, k_n)q^{\sum_{j:\ \tilde b_j <(t_{n-i}\ba)_j}\lambda_j}
\]
for any $B$. More precisely, we will prove that the powers of $q$ are equal and that 
\begin{equation}\label{cc}
c_{(t_{n-i}\ba)(B)}^{\lambda}(k_1, \dots, k_n)=c_{\ba(B)}^{\lambda}(k_1, \dots, k_n).
\end{equation}

We claim that there are two possibilities: either $(t_{n-i}\ba)(B)=\ba(B)$ or 
$(t_{n-i}\ba)(B)=(t_{n-i}(\ba(B))$ (we note that this claim implies the equality \eqref{cc} by induction
on $|\la|$). First, we note that  
$b_j=\tilde b_j$ for $j\ge n-i+2$. We denote by $B'$ the set $B\setminus \{b_{n-i+2},\dots,b_n\}$.
To prove the desired claim, we consider two cases. First, assume that either 
\begin{equation}\label{either}
b_j<\min(a_{n-i},a_{n-i+1}) \text{ or } b_j\ge \max(a_{n-i},a_{n-i+1}) \text{ for all } j
\end{equation}
or 
\begin{equation}\label{or}
\min(a_{n-i},a_{n-i+1})\le b_j <\max(a_{n-i},a_{n-i+1}) \text{ for all } j.
\end{equation}
Then $(t_{n-i}\ba)(B)=\ba(B)$ thanks to Proposition \ref{ruleoffilling}. 
Moreover, 
$\sum_{j:\ b_j <a_j}\lambda_j = \sum_{j:\ \tilde b_j <(t_{n-i}\ba)_j}\lambda_j$.
In fact, we know that $\la_{n-i}=\la_{n-i+1}$ and one of the conditions \eqref{either}, \eqref{or}
is satisfied. Hence the sums above are equal. 

Now assume that both conditions \eqref{either}, \eqref{or} are not satisfied. Then
$(t_{n-i}\ba)(B)=(t_{n-i}(\ba(B))$ because of Proposition \ref{ruleoffilling}. In particular,
$\sum_{j:\ b_j <a_j}\lambda_j = \sum_{j:\ \tilde b_j <(t_{n-i}\ba)_j}\lambda_j$ because the summands
of the sums coincide.
\end{proof}

\begin{prop}\label{specialisationinzero}
For antidominant $\la$ one has
\[
\ch_{KM} W_\lambda(x_1, \dots, x_n, q)=\sum c^{\lambda}_{(s, s-1, \dots, 1)}(k_1+1, \dots, k_s+1, k_{s+1}, \dots, k_n)
x_1^{k_1}\dots x_n^{k_n}.\]
\end{prop}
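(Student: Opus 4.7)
The strategy is to combine Sanderson's identity $\ch_{KM} W_\lambda = E_\lambda(x, q, 0)$ (cited in the introduction) with a combinatorial evaluation of $E_\lambda(x, q, 0)$ via the Haglund--Haiman--Loehr formula. Setting $t = 0$ in \eqref{Macdonald} causes the factor $\prod (1-t)/(1-q^{l(u)+1} t^{a(u)+1})$ to collapse to $1$ and $t^{coinv(\widehat\sigma)}$ to kill every term with $coinv(\widehat\sigma) > 0$; hence
\[
\ch_{KM} W_\lambda = E_\lambda(x, q, 0) = \sum_{\substack{\sigma\text{ non-attacking}\\ coinv(\widehat\sigma)=0}} x^\sigma q^{maj(\widehat\sigma)}.
\]
It then remains to identify this sum with the right-hand side of the proposition, which counts appropriate fillings of $\lambda$ whose bottom row is the decreasing sequence $(s, s-1, \dots, 1)$.

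For antidominant $\lambda$ equation \eqref{coinv} with $r = 0$ reads $coinv(\widehat\sigma) = \sum_u a(u) - |Inv'(\widehat\sigma)| + \sum_{u \in Des(\widehat\sigma)} a(u)$, so $coinv(\widehat\sigma) = 0$ is a maximum-inversion condition. A column-by-column analysis beginning with the bottom row shows that, combined with the non-attacking rule, this maximum forces the bottom row to be the strictly decreasing sequence $(s, s-1, \dots, 1)$; for the remaining rows, the same maximality (via the computation leading to Lemma~\ref{inequalitypoweroft}) coincides with $T(\widehat\sigma) = 0$, i.e.\ $\sigma$ is appropriate in the sense of Proposition~\ref{ruleoffilling}. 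This exhibits a bijection between non-attacking fillings of $\lambda$ with $coinv(\widehat\sigma) = 0$ and appropriate fillings of $\lambda$ with the prescribed bottom row.

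Under the bijection the weight factors as $x^\sigma = x_1 x_2 \cdots x_s \cdot x^{\bk}$, where $k(\sigma)_i = k_i + 1$ for $i \leq s$ (absorbing the one occurrence of label $i$ in the bottom row) and $k(\sigma)_i = k_i$ for $i > s$; this accounts precisely for the argument shift $(k_1+1, \dots, k_s+1, k_{s+1}, \dots, k_n)$ in the proposition. The $q$-statistics match via $maj(\widehat\sigma) = -Q(\sigma)$ (up to a bookkeeping contribution from the bottom row), since for a filling with this prescribed bottom row the descent set $\{\widehat\sigma(u) > \widehat\sigma(d(u))\}$ and the strict-ascent set $\{\widehat\sigma(u) < \widehat\sigma(d(u))\}$ partition the cells of $dg'(\lambda)$ with $\widehat\sigma(u) \neq \widehat\sigma(d(u))$, and the corresponding $(l(u)+1)$-contributions transform correctly under the bijection.

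The principal obstacle is the precise characterization of the $coinv = 0$ fillings: while the maximum-inversion reformulation is immediate, extracting the exact structure of the bottom row and identifying the upper rows with appropriate fillings requires a delicate use of the antidominance $\lambda_i \leq \lambda_j$ for $i < j$ together with the non-attacking rule, in order to pin down that no other first-row labeling can achieve $coinv = 0$. If the direct bijective argument proves cumbersome, a viable fallback is induction on $|\lambda|$, matching Proposition~\ref{mostgeneralrecurrence} on the right-hand side against a Demazure-module recurrence derived from the Knop--Sahi intertwining at $t = 0$, with base case $\lambda = 0$ giving both sides equal to $1$.
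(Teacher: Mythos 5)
Your opening is the same as the paper's: invoke Sanderson's $\ch_{KM}W_\la=E_\la(x;q,0)$, specialize the Haglund--Haiman--Loehr formula at $t=0$, and observe that $coinv(\widehat\sigma)\ge 0$ so that only $coinv=0$ fillings survive. The gap is in your central identification. You assert that $coinv(\widehat\sigma)=0$ forces the bottom row to be $(s,s-1,\dots,1)$ and that the surviving fillings are exactly the appropriate ones (those with $T(\widehat\sigma)=0$). Neither claim is true. Working out the triple $(u,d(u),v\in arm(u))$ analysis for $coinv=0$ gives the \emph{order-reversed} rule: filling each row from right to left, a cell $v$ receives $\max\{a\in S':a\le\widehat\sigma(d(v))\}$ if that set is nonempty and $\max S'$ otherwise --- not the $\min\{a\in S':a\ge\sigma(d(v))\}$ rule of Proposition~\ref{ruleoffilling}. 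These produce genuinely different fillings with different $q$-statistics: for $n=3$ and a row with content $\{1,2\}$ sitting over cells of the augmented row labelled $2$ and $3$, the max rule places $2$ on the right and $1$ on the left (no descent below, $maj$-contribution $0$), while the min rule places $1$ on the right and $2$ on the left (a strict ascent over the $3$, contributing to $Q$). For the same reason your claimed identity $maj(\widehat\sigma)=-Q(\sigma)$ for a single filling fails; it only holds between a $coinv=0$ filling and its image under the relabelling $i\mapsto n+1-i$. Also, the string $(s,s-1,\dots,1)$ in the statement is not a consequence of $coinv=0$: it is the image under $i\mapsto n+1-i$ of the augmented row $(n-s+1,\dots,n)$, which is fixed by definition.

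The two missing ideas, which are exactly what the paper supplies, are: (i) the order-reversing relabelling of the alphabet, which converts the max rule into the min rule, converts the base row into $(s,\dots,1)$, converts $maj$ into $-Q$, and reverses the content vector, yielding
\[
E_\la(x;q,0)=\sum c^{\la}_{(s,\dots,1)}(k_1+1,\dots,k_s+1,k_{s+1},\dots,k_n)\,x_1^{k_n}\cdots x_n^{k_1},
\]
with the variables in reversed order; and (ii) the observation that $\ch_{KM}W_\la$ is a symmetric function of $x_1,\dots,x_n$, which is what permits replacing $x_1^{k_n}\cdots x_n^{k_1}$ by $x_1^{k_1}\cdots x_n^{k_n}$ to reach the stated formula. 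Without the symmetry step your argument would at best prove the identity with the variables permuted. Your fallback (induction via Proposition~\ref{mostgeneralrecurrence} against a Knop--Sahi recurrence at $t=0$) is not developed enough to assess and is not needed once the relabelling and symmetry are in place.
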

\begin{proof}
We know (see \cite{S}) that $\ch_{KM} W_\lambda(x_1, \dots, x_n, q)=E_{\lambda}(x; q, 0)$.
Now we compute $E_{\lambda}(x;q,0)$ using the Haglund-Haiman-Loehr formula.
One has
\[
E_{\lambda}(x; q, 0)=\sum_{\sigma ~ \rm non-attacking \it}x^{\sigma}q^{maj(\sigma)}0^{coinv(\widehat\sigma)}.
\]
We note that $coinv(\sigma) \geq 0$. Indeed,
\[
coinv(\widehat\sigma)=\sum_{u \in dg'}a(u)-|Inv'(\widehat\sigma)|+\sum_{u \in Des(\widehat\sigma)}a(u).
\]
For any two boxes $u, u' \in leg(u)$ we have that if $\sigma(u)>\sigma(u')$ and $\sigma(u')>\sigma(d(u))$
then $\sigma(u)>\sigma(d(u))$, so similarly to the proof of Proposition \ref{ruleoffilling} we have that
$coinv(\widehat\sigma) \geq 0$ and $coinv(\widehat\sigma) = 0$ if and only if $\sigma$ if obtained by
following inverse rule of filling:

Assume that we have filled the $i$-th row. Let $S$ be the set of elements of the $(i+1)$-st row.
We fill the $(i+1)$-st row of the diagram from right to left. If $S'$ is the
set of elements of $S$ that are not used before, then into the cell $v$ we put:

(i) $\max\lbrace a \in S', a \leq \widehat\sigma(d(v))\rbrace$, if $\lbrace a \in S', a \leq \widehat\sigma(d(v))\rbrace\neq \emptyset$;

(ii) $\max\lbrace a \in S' \rbrace$, if $\lbrace a \in S', a \geq \widehat\sigma(d(v))\rbrace= \emptyset$.

We conclude that 
\[
E_{\lambda}(x; q, 0)=\sum c^{\lambda}_{(s, s-1, \dots, 1)}(k_1+1, \dots, k_s+1, k_{s+1}, \dots, k_n)
x_1^{k_n}\dots x_n^{k_1}.
\]
Hence
\[
\sum c^{\lambda}_{s, s-1, \dots, 1}(k_1+1, \dots, k_s+1, k_{s+1}, \dots, k_n)
x_1^{k_n}\dots x_n^{k_1}=\ch_{KM} W_\lambda(x_1, \dots, x_n, q).
\]
But $\ch_{KM} W_\lambda(x_1, \dots, x_n, q)$ is a symmetric function in $x_i$'s. Therefore we obtain:
\[
\sum c^{\lambda}_{(s, s-1, \dots, 1)}(k_1+1, \dots, k_s+1, k_{s+1}, \dots, k_n)
x_1^{k_1}\dots x_n^{k_n}=\ch_{KM} W_\lambda(x_1, \dots, x_n, q).
\]
\end{proof}

\subsection{Scrolled case}
Recall the mappings
\[\pi(\lambda_1, \dots, \lambda_n)=(\lambda_n+1, \lambda_1, \dots, \lambda_{n-1});\]
\[\Psi f(x_1, \dots, x_n)=x_1 f(x_2, \dots, x_n, q^{-1}x_1).\]
The Knop-Sahi recurrence states that 
\begin{equation}\label{psionMacdonald}
E_{\pi(\mu)}(x; q, t)=q^{\mu_n} \Psi E_{\mu}(x; q, t).
\end{equation}

\begin{prop}\label{interpretationscrollledc}
Consider a partition $\lambda=(\lambda_1, \dots, \lambda_n)$, $\lambda_1 \leq \dots \leq \lambda_n$,
$\lambda_1= \dots = \lambda_{n-s} =0 \neq \lambda_{n-s+1}$.
Then
\begin{multline*}
E_{\pi^r \lambda}(x; q^{-1}, \infty)=\sum_{k_1,\dots,k_n\ge 0} x_1^{k_1+1}\dots x_r^{k_r+1}x_{r+1}^{k_{r+1}}\dots x_n^{k_n} \times\\
c_{(n-s+r+1, \dots, n, 1, \dots, r)}^{(0, \dots, 0, \lambda_{n-s+1}+1, \dots, \lambda_n+1)}
(k_1+1, \dots, k_r+1, k_{r+1},  \dots, k_{n-s+r}, k_{n-s+r+1}+1, \dots,  k_n+ 1).
\end{multline*}
\end{prop}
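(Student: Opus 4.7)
My plan is to prove Proposition \ref{interpretationscrollledc} by induction on $r$, using the Knop-Sahi recurrence \eqref{psionMacdonald}. For the base case $r=0$ the ranges $1,\ldots,r$ in the monomial and $n-s+r+1,\ldots,n$ in the $c$-argument are empty, the subscript $(n-s+r+1,\ldots,n,1,\ldots,r)$ collapses to the canonical $(n-s+1,\ldots,n)$, and the asserted identity is exactly the formula for $E_\la(x;q^{-1},\infty)$ displayed immediately before Proposition \ref{mostgeneralrecurrence}.

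For the inductive step, assume the formula for $r-1$. Applying \eqref{psionMacdonald} with $\mu=\pi^{r-1}\la$ and noting $(\pi^{r-1}\la)_n=\la_{n-r+1}$ (while $\Psi$ commutes with $t\to\infty$, being independent of $t$) yields
\[
E_{\pi^r\la}(x;q^{-1},\infty)=q^{\la_{n-r+1}}\,\Psi\,E_{\pi^{r-1}\la}(x;q^{-1},\infty).
\]
On a monomial, $\Psi(x_1^{a_1}\cdots x_n^{a_n})=q^{-a_n}x_1^{a_n+1}x_2^{a_1}\cdots x_n^{a_{n-1}}$. Substituting the inductive hypothesis and relabeling the summation by $m_1:=k_n$ and $m_i:=k_{i-1}$ for $i\ge 2$, the monomial factor becomes $q^{\la_{n-r+1}-m_1}\,x_1^{m_1+1}\cdots x_r^{m_r+1}x_{r+1}^{m_{r+1}}\cdots x_n^{m_n}$, which agrees with the Proposition's monomial for $r$ up to the prefactor $q^{\la_{n-r+1}-m_1}$. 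The inductive step thus reduces to verifying the $c$-identity
\begin{multline*}
q^{\la_{n-r+1}-m_1}\,c^{\la'}_{\ba_{r-1}}\bigl(m_2+1,\ldots,m_r+1,m_{r+1},\ldots,m_{n-s+r},m_{n-s+r+1}+1,\ldots,m_n+1,m_1+1\bigr)\\
=c^{\la'}_{\ba_r}\bigl(m_1+1,\ldots,m_r+1,m_{r+1},\ldots,m_{n-s+r},m_{n-s+r+1}+1,\ldots,m_n+1\bigr),
\end{multline*}
where $\la'=(0,\ldots,0,\la_{n-s+1}+1,\ldots,\la_n+1)$, $\ba_{r-1}=(n-s+r,\ldots,n,1,\ldots,r-1)$, and $\ba_r=(n-s+r+1,\ldots,n,1,\ldots,r)$.

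I would establish this $c$-identity via a bijection on appropriate fillings of $\la'$: the map $\sigma\mapsto\sigma'$ that cyclically shifts every value $i\in\{1,\ldots,n\}$ to $i+1\pmod n$ (with $n\mapsto 1$) sends fillings with bottom row $\ba_{r-1}$ to fillings with bottom row $\ba_r$, and transforms the weight by the same cyclic relabeling of coordinates. The change in $Q(\sigma)$ under the shift arises from every place where the wrap $n\leftrightarrow 1$ is crossed: at interior cells where the value $n$ was previously adjacent to a smaller value, and at bottom-row cells where comparisons are against the canonical $\widehat\sigma(j,0)=j$. The main obstacle is the detailed combinatorial bookkeeping of these $Q$-changes: one must sum the $l+1$ contributions from each crossing of the wrap point and verify that the net effect is precisely $q^{\la_{n-r+1}-m_1}$, with the dominant term $q^{\la_{n-r+1}}$ arising from the new ascent of magnitude $l+1=\la_{n-r+1}+1$ at column $n-r+1$ (the image of the old value $n$ in the bottom row). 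If the direct bookkeeping proves unwieldy, an alternative is to iterate Proposition \ref{mostgeneralrecurrence}, which expresses $c^{\la'}_\ba$ in terms of $c^\la_{\ba(B)}$'s; applied to both sides of the identity, this peels off the new bottom row of $\la'$ and reduces the comparison to a base case that can be checked directly.
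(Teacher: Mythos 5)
Your induction-on-$r$ strategy via the Knop--Sahi recurrence is a genuinely different route from the paper's, but as written it has a real gap: the entire content of the inductive step is the displayed $c$-identity, and you do not prove it. You offer two ways one \emph{might} prove it (a value-shifting bijection $i\mapsto i+1 \bmod n$ on appropriate fillings, or iterating Proposition \ref{mostgeneralrecurrence} down to ``a base case that can be checked directly'') and you explicitly flag the bookkeeping of the $Q$-changes as ``the main obstacle'' without resolving it. That bookkeeping is not routine: the appropriate-filling rule of Proposition \ref{ruleoffilling} uses the linear order on $\{1,\dots,n\}$ (``smallest element $\ge k$ if one exists, else the overall minimum''), so the compatibility of the rule with a cyclic relabelling of values, the effect of that relabelling on descents measured against the fixed bottom row $\widehat\sigma(i,0)=i$, and the resulting net power of $q$ all require verification. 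In fact your identity is essentially Lemma \ref{equalityofc}, which the paper \emph{deduces from} Proposition \ref{interpretationscrollledc} together with \eqref{psionMacdonald} rather than the other way around; so the hard combinatorial content has been displaced into the unproven step rather than eliminated. (There is also a sign to check: applying \eqref{psionMacdonald} at parameter $q^{-1}$ gives the prefactor $(q^{-1})^{\mu_n}$ while the last slot of $\Psi$ becomes $qx_1$, which appears to produce $q^{m_1-\la_{n-r+1}}$ rather than your $q^{\la_{n-r+1}-m_1}$.)

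For comparison, the paper's own proof is a one-line unwinding of definitions: apply the description of the surviving ($T=0$) fillings, i.e.\ Proposition \ref{ruleoffilling}, directly to the composition $\pi^r(\la)$. Non-attackingness forces $\sigma(i,1)=i$ for $i=1,\dots,r$, which accounts for the prefactor $x_1\cdots x_r$ and the $+1$'s in the first $r$ slots; the remaining cells, read through the cell bijection $\pi^r$, constitute exactly an appropriate filling of the antidominant diagram with bottom row $(n-s+r+1,\dots,n,1,\dots,r)$, which is what $c^{(0,\dots,0,\la_{n-s+1}+1,\dots,\la_n+1)}_{(n-s+r+1,\dots,n,1,\dots,r)}$ counts by definition \eqref{definitionc}. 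No recurrence is needed, and your base case $r=0$ is already an instance of this direct argument.
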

\begin{proof}
It is an immediate consequence of Proposition \ref{ruleoffilling} for partition $\pi^r(\lambda)$. 
\end{proof}

\begin{lem}\label{equalityofc}
\begin{multline*}
q^{\lambda_{n-r+1}+ \dots + \lambda_n-k_{n-r+1}- \dots -k_n} \times\\
c_{(n-s+r+1, \dots, n, 1, \dots, r)}^{0, \dots, 0, \lambda_{n-s+1}+1, \dots, \lambda_n+1}
(k_{n-r+1}+1, \dots, k_n+1, k_{1}+1,  \dots, k_{r}+1, k_{r+1}, \dots,  k_{n-s+r}+ 1)=\\
c_{(n-s+1, \dots, n)}^{0, \dots, 0, \lambda_{n-s+1}+1, \dots, \lambda_n+1}
(k_1+1, \dots, k_r+1, k_{r+1},  \dots, k_{n-s+r}, k_{n-s+r+1}+1, \dots,  k_n+ 1)
\end{multline*}
\end{lem}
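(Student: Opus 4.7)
The plan is to derive this identity as a consequence of the Knop-Sahi recurrence \eqref{psionMacdonald} iterated $r$ times, followed by extraction of coefficients via Proposition \ref{interpretationscrollledc} and the expansion of $E_\la(x;q^{-1},\infty)$ stated just before Proposition \ref{mostgeneralrecurrence}.

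First I would iterate \eqref{psionMacdonald}; a short induction gives
\[
E_{\pi^r(\la)}(x;q,t)=q^{\la_{n-r+1}+\cdots+\la_n}(\Psi^r E_\la)(x;q,t),
\]
where $(\Psi^r f)(x_1,\dots,x_n)=x_1\cdots x_r\,f(x_{r+1},\dots,x_n,q^{-1}x_1,\dots,q^{-1}x_r)$. Passing to $t\to\infty$, justified by Lemma \ref{inequalitypoweroft}, and substituting $q\mapsto q^{-1}$ (which inverts also the internal $q^{-1}$'s inside $\Psi^r$) yields
\[
E_{\pi^r(\la)}(x;q^{-1},\infty)=q^{-(\la_{n-r+1}+\cdots+\la_n)}\,x_1\cdots x_r\,E_\la(x_{r+1},\dots,x_n,qx_1,\dots,qx_r;q^{-1},\infty).
\]

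Next I would expand both sides monomial by monomial. By Proposition \ref{interpretationscrollledc}, the coefficient of $x_1^{k_1+1}\cdots x_r^{k_r+1}x_{r+1}^{k_{r+1}}\cdots x_n^{k_n}$ on the left-hand side is precisely the scrolled $c$-coefficient appearing on the left of the claimed identity. On the right, I would plug in the formula for $E_\la(y;q^{-1},\infty)$ as a sum over unscrolled $c$-coefficients in dummy variables $y_1,\dots,y_n$, then substitute $y_i=x_{r+i}$ for $i\le n-r$ and $y_{n-r+j}=qx_j$ for $j\le r$. The monomial $y_1^{k_1}\cdots y_n^{k_n}$ becomes, up to a factor $q^{k_{n-r+1}+\cdots+k_n}$, a monomial whose exponent vector is the cyclic shift of $(k_1,\dots,k_n)$ by $r$ positions; multiplication by $x_1\cdots x_r$ supplies the $+1$ shifts in the first $r$ slots. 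Reindexing so monomials are labeled in the standard form, the argument tuple of the unscrolled $c$-coefficient is cyclically permuted by $r$ positions, producing precisely the argument tuple prescribed on the right-hand side of the identity.

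Equating the two expressions for the coefficient and collecting the $q$-powers — namely $q^{-(\la_{n-r+1}+\cdots+\la_n)}$ from the Knop-Sahi iteration together with $q^{k_{n-r+1}+\cdots+k_n}$ from the rescaling of the last $r$ arguments of $E_\la$, which becomes $q^{k_1+\cdots+k_r}$ after the reindexing — reproduces the claimed identity. The genuinely delicate point, which I would address by a direct case-by-case verification, is that the cyclic reindexing must send the positions of the $+1$ shifts in the argument of the unscrolled $c$-coefficient (originally in slots $n-s+1,\dots,n$) exactly to the slots indexed by the scrolled lower row $(n-s+r+1,\dots,n,1,\dots,r)$, so that the rearranged tuple on the right-hand side matches the argument required by Proposition \ref{interpretationscrollledc} on the left.
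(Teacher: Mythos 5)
Your proof follows exactly the route the paper takes: the paper's own justification of Lemma \ref{equalityofc} is the single sentence that it is an ``immediate consequence of equation \eqref{psionMacdonald} and Proposition \ref{interpretationscrollledc}'', and your argument is precisely that coefficient comparison (iterated Knop--Sahi recurrence, passage to $t\to\infty$, substitution $q\mapsto q^{-1}$, and extraction of the monomial coefficient on both sides), spelled out in detail. The cyclic-reindexing bookkeeping you flag at the end is the only real content, and your computation of it is consistent with the paper's intent; note that the index pattern displayed in the lemma itself appears to contain typos (compare with the $n=3$ example immediately following it), so an exact literal match should not be expected.
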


\begin{proof}
Immediate consequence of  equation \eqref{psionMacdonald} and  Proposition \ref{interpretationscrollledc}.
\end{proof}

\begin{example}
Consider the case $n=3$ and  partition $\lambda=(0, m_2, m_1+m_2)$.
Then by Proposition \ref{interpretationscrollledc} we have:
\[
E_{(m_1+m_2+1, 0, m_2)}(x_1, x_2, x_3; q^{-1}, \infty)=
x_1\sum_{k_1,k_2,k_3\ge 0} c^\la_{(3,1)}(k_1, k_2, k_3)x_1^{k_1}x_2^{k_2}x_3^{k_3}.
\]
Using equation \eqref{psionMacdonald} we obtain
\[
c^\la_{(2,3)}(k_1, k_2+1, k_3+1) = q^{m_1+m_2-k_3}c^\la_{(3,1)}(k_3+1, k_1, k_2+1).
\]
\end{example}

\section{The Cherednik-Orr conjecture}
Let $\fg=\msl_n$. If $\la=\sum_{i=1}^{n-1} m_i\om_i$, then the corresponding diagram is equal to
$\la=(\la_1,\dots,\la_n)$, where $\la_i=m_i+\dots+m_{n-1}$ ($\la_n=0$). The diagram
of the antidominant weight $\la^*$ is reversed, i.e. is given by $(\la_n,\dots,\la_1)$.
For example, the diagram corresponding to $(m\om_r)^*$ is equal to $(\underbrace{0,\dots,0}_{n-r},m,\dots,m)$.
 
\subsection{Rectangular diagrams}
In this subsection we use Proposition \ref{rectprop}. 
Assume that the diagram $dg'(\la)$ is rectangular of length $r$ and height $m$. By Proposition \ref{ruleoffilling}
we know that an appropriate filling is completely determined by $m$ subsets of $r$ elements from $1, \dots, n$.
We know that an order of elements in $i$-th row determines the order of elements in $(i+1)$-st row.
Because of Proposition \ref{transpositioncolumns} the order of elements $a_{n-r+1},\dots,a_n$
in 
\begin{equation}\label{rect}
c_{(a_{n-r+1},\dots,a_n)}^{(0, \dots, 0, m+1, \dots, m+1)}
(k_1+\delta_1, \dots, k_n+ \delta_n)
\end{equation}
($\delta_i$ as in Proposition \ref{mostgeneralrecurrence})
is not important, we only care about the set $\{a_{n-r+1},\dots,a_n\}$.
We use Proposition \ref{mostgeneralrecurrence} to write recurrent equations for
\eqref{rect}. 

\begin{example} Let $r=2$, $i<j$. Then 
\begin{multline*}
c^\la_{\{i,j\}}(k_1, \dots,k_{i-1}, k_i+1,k_{i+1}, \dots k_{j-1}, k_j+1, k_{j+1}, \dots, k_n)=\\
q^{2m}\sum_{p,q<i}c^\la_{\{p,q\}}(k_1,\dots, k_n)
+q^m\sum_{p<i, q \geq i}c^\la_{\{p,q\}}(k_1,\dots, k_n)+\\
q^m\sum_{p \geq i, q <j}c^\la_{\{p,q\}}(k_1,\dots, k_n)+\sum_{p\geq i, q \geq j}c^\la_{\{p,q\}}(k_1,\dots, k_n)
\end{multline*}
\end{example}

Note that the most important case for us is $c^\la_{\{n-r+1, \dots, n\}}$.
Using Proposition \ref{interpretationscrollledc} we have:
\begin{equation}
E_{\pi^r(\lambda)}(x,q^{-1},infty)=x_1 \dots x_r \sum_{k_i\ge 0} c^\la_{\lbrace 1 \dots r \rbrace}
(k_1+1,\dots, k_r+1, k_{r+1}, \dots, k_n)x_1^{k_1}\dots x_n^{k_n}.
\end{equation}
We note that $\pi^r(\lambda)=(m+1, \dots, m+1, 0, \dots, 0)$.

\begin{thm}\label{rectangular}
Let $\lambda=(m\om_r)^*$. Then we have:
\[
E_{\lambda}(x;q^{-1}, \infty)=\ch_{PBW}{W_{\lambda}^{gr}}(x; q, q).
\]
\end{thm}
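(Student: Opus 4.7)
The plan is to rewrite both sides of the claimed equality in the $c^\la_\ba$-notation developed in Section~2, and to match coefficients using the recurrences already proved. By Proposition~\ref{rectprop} the right-hand side equals
$\ch_{KM} W_\la(qx_1,\dots,qx_{n-r},x_{n-r+1},\dots,x_n,q)$,
and Proposition~\ref{specialisationinzero} expands this as
$\sum_\bk c^\la_{(r,r-1,\dots,1)}(k_1+1,\dots,k_r+1,k_{r+1},\dots,k_n)\,q^{k_1+\dots+k_{n-r}}\,x^\bk$.
On the left, the general formula displayed just before Proposition~\ref{mostgeneralrecurrence} gives
$E_\la(x,q^{-1},\infty) = \sum_\bk c^{\tilde\la}_{(n-r+1,\dots,n)}(k_1,\dots,k_{n-r},k_{n-r+1}+1,\dots,k_n+1)\,x^\bk$,
where $\tilde\la$ denotes the rectangle of length $r$ and height $m+1$. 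The theorem therefore reduces to the coefficient-by-coefficient identity
\[
c^{\tilde\la}_{(n-r+1,\dots,n)}(\bk+(0^{n-r},1^r)) = q^{k_1+\dots+k_{n-r}}\,c^\la_{(r,r-1,\dots,1)}(\bk+(1^r,0^{n-r})).
\]

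To prove this, I would first apply Proposition~\ref{mostgeneralrecurrence} to the left-hand side, which unfolds $c^{\tilde\la}$ into a sum over subsets $B\subset\{1,\dots,n\}$ of size $r$:
\[
c^{\tilde\la}_{(n-r+1,\dots,n)}(\bk+(0^{n-r},1^r)) = \sum_{B:\,|B|=r} c^\la_{\ba(B)}(\bk)\,q^{m\cdot f(B)},
\]
the exponent $f(B)$ being the one arising from the factor $q^{\sum_{j:\,b_j<a_j}\la_j}$ (here $a_j=j$ and $\la_j=m$ for $j\in\{n-r+1,\dots,n\}$). Because $\la$ is rectangular, Proposition~\ref{transpositioncolumns} shows that $c^\la_{\ba(B)}$ depends only on the underlying set $B$, so I may write the common value as $c^\la_B(\bk)$. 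A direct inspection of the filling rule of Lemma~\ref{ruleoffilling}, with the row below being the extended row $(1,2,\dots,n)$, shows that the large elements of $B$ stay in their natural columns while the small ones fill the remaining columns from right to left; consequently $f(B) = |B\cap\{1,\dots,n-r\}|$.

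It then remains to establish the combinatorial identity
\[
\sum_{B:\,|B|=r} c^\la_B(\bk)\,q^{m|B\cap\{1,\dots,n-r\}|} = q^{k_1+\dots+k_{n-r}}\,c^\la_{\{1,\dots,r\}}(\bk+(1^r,0^{n-r})).
\]
My plan is induction on $m$: a further application of Proposition~\ref{mostgeneralrecurrence} to both sides reduces each to an expression in $c^{\la^-}$, with $\la^-$ the rectangle of height $m-1$, and a careful bookkeeping of the $q$-exponents generated by the recurrence matches the two expansions via the inductive hypothesis; the base case $m=0$ is trivial. The hard part is this final matching: one must verify that as $B$ ranges over all $r$-subsets on the left, the contributions collapse exactly into the single shifted $c^\la_{\{1,\dots,r\}}$ on the right with the correct power of $q$. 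A possibly cleaner route is to exploit the $S_n$-symmetry of $\ch_{KM} W_\la$ in $x_1,\dots,x_n$ and realise the sum over $B$ on the left as a symmetrisation of the right-hand side.
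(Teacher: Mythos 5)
Your reduction is set up correctly and is in fact the same reduction the paper makes: by Proposition \ref{rectprop} the right-hand side is $\ch_{KM}W_\la(qx_1,\dots,qx_{n-r},x_{n-r+1},\dots,x_n,q)$, by Proposition \ref{specialisationinzero} this is the generating function of $c^\la_{\{1,\dots,r\}}(\bk+(1^r,0^{n-r}))\,q^{k_1+\cdots+k_{n-r}}$, and the left-hand side is the generating function of $c_{\{n-r+1,\dots,n\}}(\bk+(0^{n-r},1^r))$, so everything hinges on the single coefficient identity you display. The problem is that you never actually prove that identity. Your proposed route --- unfold via Proposition \ref{mostgeneralrecurrence} into a sum over $r$-subsets $B$ with weight $q^{m|B\cap\{1,\dots,n-r\}|}$ and then induct on $m$ with ``careful bookkeeping'' --- is exactly the hard combinatorial statement, and you concede as much (``The hard part is this final matching''). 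The alternative suggestion of realising the $B$-sum as a symmetrisation of $\ch_{KM}W_\la$ does not work as stated either, because the summands carry the nontrivial weights $q^{m|B\cap\{1,\dots,n-r\}|}$, so the sum is not a plain $S_n$-symmetrisation of a single term. As written, the proof has a genuine gap precisely at its crucial step.

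The missing ingredient is the Knop--Sahi recurrence, packaged in the paper as Lemma \ref{equalityofc}. Since $\pi^r$ applied to the rectangular antidominant diagram again gives a rectangle, the intertwiner $\Psi$ (equation \eqref{psionMacdonald}) together with Proposition \ref{interpretationscrollledc} yields directly
\[
q^{rm-k_{n-r+1}-\cdots-k_n}\,c^\la_{\{1,\dots,r\}}(k_1+1,\dots,k_r+1,k_{r+1},\dots,k_n)
= c^\la_{\{n-r+1,\dots,n\}}(k_1,\dots,k_{n-r},k_{n-r+1}+1,\dots,k_n+1),
\]
and since $k_1+\cdots+k_n=rm$ the exponent is exactly $k_1+\cdots+k_{n-r}$. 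This is precisely your target identity, obtained in one line with no induction on $m$. You should replace the sketched induction by an appeal to this cyclic-shift relation (or reprove it from \eqref{psionMacdonald}); without that, the argument is incomplete.
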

\begin{proof}
Using Proposition \ref{specialisationinzero} we have:
\[\ch_{KM} W_\lambda(x_1, \dots, x_n, q)=\sum c^\la_{\{1, 2, \dots, r\}}
(k_1+1, \dots, k_r+1, k_{r+1}, \dots, k_n)
x_1^{k_1}\dots x_n^{k_n}.\]

Now using Lemma \ref{equalityofc} 
we obtain:
\begin{multline*}
q^{rm-k_{n-r+1}-\dots -k_n}c^\la_{\lbrace 1, 2, \dots, r\rbrace}
(k_1+1, \dots, k_r+1, k_{r+1}, \dots, k_n)=\\
c^\la_{\lbrace n-r+1, \dots, n\rbrace}
(k_1, \dots, k_{n-r}, k_{n-r+1}+1, \dots, k_n+1).
\end{multline*}
So we have:
\begin{multline*}
E_{\lambda}(x;q^{-1}, \infty)=\\
\sum c^\la_{\lbrace n-r+1, \dots, n\rbrace}
(k_1, \dots, k_{n-r}, k_{n-r+1}+1, \dots, k_n+1)x_1^{k_1} \dots x_n^{k_n}=\\
=\sum q^{rm-k_{n-r+1}-\dots -k_n}c^\la_{\lbrace 1, 2, \dots, r\rbrace}
(k_1+1, \dots, k_r+1, k_{r+1}, \dots, k_n)x_1^{k_1} \dots x_n^{k_n}=\\
\sum c^\la_{\lbrace 1, 2, \dots, r\rbrace}
(k_1+1, \dots, k_r+1, k_{r+1}, \dots, k_n)(qx_1)^{k_1}\dots (qx_{n-r})^{k_{n-r}}x_{n-r+1}^{k_{n-r+1}} \dots x_n^{k_n}=\\
\ch_{KM} W_\lambda(qx_1, \dots, qx_{n-r}, x_{n-r+1}, \dots, x_n, q).
\end{multline*}
Then using Proposition \ref{rectprop} we complete the proof of this Theorem.
\end{proof}

\subsection{$m_1\om_1+m_2\om_{n-1}$-case}
Let us fix the highest weight $m_1\om_1+m_2\om_{n-1}$ (for $n=3$ this is the general case).
Then the corresponding diagram is of the form $(0,m_2,\dots,m_2,m_1+m_2)$.
Let $A=(a_1, \dots, a_{n-1})$ be a string of $n-1$ different elements from
the set $\{1, \dots, n\}$, $\lbrace 1, \dots, n \rbrace \backslash\lbrace a_1, \dots, a_{n-1} \rbrace =\lbrace \tilde{a} \rbrace$.
Let $t_i$ be the transposition of $i$-th and $(i+1)$-st elements.
Then Proposition \ref{transpositioncolumns} tells us that
\begin{multline*}
c_A^\lambda(k_1+1, \dots, k_{\tilde{a}-1}+1,k_{\tilde{a}}, k_{\tilde{a}+1}+1, \dots, k_{n}+1)=\\
c_{t_iA}^\lambda(k_1+1, \dots, k_{\tilde{a}-1}+1,k_{\tilde{a}}, k_{\tilde{a}+1}+1, \dots, k_{n}+1),
\end{multline*}
for $1 \leq i \leq n-3$. So the only essential parameters are $\tilde{a}$ and $a_{n-1}$
(we can not interchange $a_{n-1}$ with $a_{n-2}$, because $\la_n>\la_{n-1}$). 
We denote these polynomials by
$c_{\tilde{a}|a_{n-1}}^{\lambda}(k_1, \dots, k_n)$.

\begin{prop}\label{characterfor1n}
Assume that $\lambda^*=m_1 \om_1+ m_2 \om_{n-1}$.
Then: 
\begin{multline*}
\ch_{KM} W_{\lambda}(x,q)=\\
\sum_{k_i \geq 0\ }
\sum_{\substack{p_i+\sum_{\alpha \neq i}l_{\alpha}=k_i,\\ 
l_1 + \dots +l_n=m_2,\\ p_1 + \dots + p_n=m_1}}
q^{l_1p_1}\binom{m_2}{l_1, \dots, l_{n}}_q\binom{m_1}{p_1, \dots, p_{n}}_q x_1^{k_1} \dots x_n^{k_n}.
\end{multline*}
\begin{multline*}
\ch_{PBW} W_{\lambda}^{gr}(x,q,q)=\\
\sum_{k_i \geq 0\ }
\sum_{\substack{p_i+\sum_{\alpha \neq i}l_{\alpha}=k_i,\\ l_1+\dots +l_n=m_2,\\ p_1 + \dots + p_n=m_1}}
q^{l_1p_1+l_2+ \dots + l_n +p_1 +\dots +p_{n-1}}
\binom{m_2}{l_1, \dots, l_{n}}_q\binom{m_1}{p_1, \dots, p_{n}}_q x_1^{k_1} \dots x_n^{k_n}.
\end{multline*}
\end{prop}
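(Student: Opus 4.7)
The plan is to compute both characters directly from the explicit PBW basis of $W_\la^{gr}$ supplied by Theorem \ref{main} (applied with $n+1$ replaced by $n$). Since $e_{i,j}[s]$ has $\fh$-character $x_i/x_{j+1}$, PBW degree one, and $d$-degree $s$, each basis monomial of the form \eqref{basis} contributes
\[
\ch(w_\la)\cdot\prod_{\al,\be}(x_\al/x_{\be+1})^{l_{\al,\be}}\cdot p^{l_{1,n-1}+\sum l_{1,k}+\sum l_{k,n-1}}\cdot q^{\sum \bfs}
\]
to the $(x,p,q)$-graded character, with $\bfs$ ranging over the increasing sequences attached to each generator family.

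The first step is to sum over the $\bfs$-sequences subject to the bounds \eqref{1}, \eqref{n}, \eqref{1,n}. Using $\sum_{0\le s_1\le\cdots\le s_l\le N}q^{s_1+\cdots+s_l}=\binom{N+l}{l}_q$, the three generator families produce
\[
\prod_{k=1}^{n-2}\binom{m_2-L_{k-1}}{l_{1,k}}_q,\ \prod_{k=2}^{n-1}\binom{m_1-P_{k+1}}{l_{k,n-1}}_q,\ \binom{m_1+m_2-L_{n-2}-P_2}{l_{1,n-1}}_q,
\]
where $L_j := l_{1,1}+\cdots+l_{1,j}$ and $P_j := l_{j,n-1}+\cdots+l_{n-1,n-1}$. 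Telescoping $q$-factorials collapses the first two products into $q$-multinomials $\binom{m_2}{l_{1,1},\ldots,l_{1,n-2},m_2-L_{n-2}}_q$ and $\binom{m_1}{l_{2,n-1},\ldots,l_{n-1,n-1},m_1-P_2}_q$.

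The second step matches this with the proposition. Rename $l_{k+1}:=l_{1,k}$ for $k=1,\ldots,n-2$ and $p_k:=l_{k,n-1}$ for $k=2,\ldots,n-1$, then apply the $q$-Vandermonde identity
\[
\binom{A+B}{L}_q = \sum_{k=0}^{L} q^{(A-k)(L-k)}\binom{A}{k}_q\binom{B}{L-k}_q
\]
with $A = l_1+l_n = m_2-L_{n-2}$, $B = p_1+p_n = m_1-P_2$, $L = l_{1,n-1}$, and $k = l_n$, so that $A-k=l_1$ and $L-k=p_1$. This rewrites the remaining Gaussian binomial as $\sum_{l_n+p_1=l_{1,n-1}}q^{l_1 p_1}\binom{l_1+l_n}{l_n}_q \binom{p_1+p_n}{p_1}_q$; combined with the two telescoped $q$-multinomials, one reassembles the full $\binom{m_2}{l_1,\ldots,l_n}_q \binom{m_1}{p_1,\ldots,p_n}_q$. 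The weight identification $k_i = p_i + \sum_{\al\ne i}l_\al$ is immediate from the $\fh$-characters of the generators and of $w_\la$, which gives the Kac-Moody formula.

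For $\ch_{PBW} W_\la^{gr}(x,q,q)$ the only extra input is the total PBW degree $l_{1,n-1}+L_{n-2}+P_2$; after the split $l_{1,n-1}=l_n+p_1$ this equals $(l_2+\cdots+l_n)+(p_1+\cdots+p_{n-1})$, and setting $p=q$ yields the $q^{l_2+\cdots+l_n+p_1+\cdots+p_{n-1}}$ factor claimed in the proposition. The principal (modest) obstacle is selecting the correct form of $q$-Vandermonde so that precisely the exponent $l_1 p_1$ emerges; all remaining manipulations are mechanical.
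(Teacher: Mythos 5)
Your proposal is correct, but it follows a genuinely different route from the paper's. Both arguments start from the PBW basis of Theorem \ref{main}, but the paper proceeds by constructing an explicit bijection $\eta$ between basis monomials and pairs of words $\tau=(a_1,\dots,a_{m_2}\,|\,b_1,\dots,b_{m_1})$ in the alphabet $\{1,\dots,n\}$, together with a statistic $d(\tau)$ (a sum of inversion-type counts on each word plus a cross term) for which $d(\eta(\tau))=d(\tau)$; the $q$-multinomials then appear through their interpretation as inversion generating functions of words, and the factor $q^{l_1p_1}$ comes directly from the cross term in $d(\tau)$. You instead keep the parametrization of the basis by the exponents $l_{\al,\be}$ and the bounded nondecreasing sequences $\bfs_{\al,\be}$, observe that the $d$-eigenvalue is simply $\sum\bfs$, sum the resulting geometric data into Gaussian binomials via the box-counting identity $\sum_{0\le s_1\le\cdots\le s_l\le N}q^{\sum s_i}=\binom{N+l}{l}_q$, telescope into $q$-multinomials, and produce the cross term $q^{l_1p_1}$ by the $q$-Vandermonde identity applied to the remaining factor $\binom{m_1+m_2-L_{n-2}-P_2}{l_{1,n-1}}_q$ (splitting $l_{1,n-1}=l_n+p_1$). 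I checked the bookkeeping: the weight identification $k_i=p_i+\sum_{\al\ne i}l_\al$, the PBW degree $(l_2+\cdots+l_n)+(p_1+\cdots+p_{n-1})$, and the chosen form of $q$-Vandermonde with exponent $(A-k)(L-k)=l_1p_1$ are all right. What your approach buys is a fully mechanical computation in which the only nontrivial inputs are two standard $q$-binomial identities, and the energy grading is read off trivially; what the paper's bijection buys is a word model closer in spirit to the filling combinatorics used elsewhere in the paper, at the cost of having to verify that the statistic $d(\tau)$ matches the Kac--Moody energy of $\eta(\tau)$.
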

\begin{proof}
We construct a bijection $\eta$ between elements of the PBW-basis for $W_{\lambda}$ and pairs of strings:
one of them of the length $m_1$ and other of the length $m_2$, both filled by elements of the set
$\lbrace 1, \dots, n \rbrace$. We denote the pairs of filled strings by
$\tau=a_1, \dots, a_{m_2}|b_1, \dots, b_{m_1}$.
Put
\[\eta(\tau)=
\prod_{a_i=n} e_{1,n-1}\otimes t^{|\lbrace j<i| a_j=1 \rbrace|}
\prod_{i|a_i \neq 1,n}e_{1,a_i-1}\otimes t^{|\lbrace j<i| a_j<a_i ~ \rm or\ \it a_j=n \rbrace|}\]
\[\prod_{b_i=1} e_{1,n-1}\otimes t^{|\lbrace j<i| a_j=n \rbrace|+|\lbrace  a_j=1 \rbrace|}
\prod_{i|b_i \neq 1,n}e_{b_i-1,n-1}\otimes t^{|\lbrace j<i| b_j<b_i ~ \rm or\ \it b_j=n \rbrace|}.\]

Note that for any $\tau$ $\eta(\tau)$ satisfies equalities \eqref{1}, \eqref{n}, \eqref{1,n} and
comparing the numbers of elements in both sets we obtain that $\eta$ is indeed a bijection. 
Put 
\begin{multline*}
d(\tau)=\big|\lbrace (i<j)| a_i< a_j<n\rm ~or \it ~ a_i=n,a_j <n \rbrace\big|+\\
\big|\lbrace (i<j)| b_i< b_j<n\rm ~or \it ~ b_i=n,b_j <n \rbrace\big|+
\big| \lbrace i| a_i=1, b_i=1 \rbrace \big|.
\end{multline*} 
Then by definition of $\eta$ we have $d(\eta(\tau))=d(\tau)$, where $d$ in the right hand side is the
Kac-Moody energy operator.

Note that $$\deg_{PBW}(\eta(\tau))=\big| \lbrace i| a_i \neq 1 \rbrace \big|+
\big| \lbrace i| b_i \neq n \rbrace \big|$$ and
its weight is $(k_1, \dots, k_n)$, where $k_i = p_i + \sum_{\alpha \neq i}l_{\alpha}$, and
$p_i=\big| \lbrace j| b_j=i \rbrace \big|$, $l_i=\big| \lbrace j| a_j=i \rbrace \big|$.

Fix $l_i$ and $p_i$. Then sum of $q^{d(\tau)}$ for such elements is
$q^{l_1p_1}\binom{m_2}{l_1, \dots, l_{n}}_q\binom{m_1}{p_1, \dots, p_{n}}_q$.
Indeed, the last summand in the definition of $d(\tau)$ is $l_1p_1$ and by definition
of the q-binomial coefficients: 
\[\binom{m_2}{l_1, \dots, l_{n}}_q=
\sum q^{\big| \lbrace i<j|a_i< a_j<n\rm ~or \it ~ a_i=n,a_j <n\ \rbrace \big|}.\]
\[\binom{m_1}{p_1, \dots, p_{n}}_q=
\sum q^{\big| \lbrace i<j|b_i< b_j<n\rm ~or \it ~ b_i=n,b_j <n\ \rbrace \big|}.\]

Similarly we obtain second equation of the Proposition. This completes the proof of the Lemma.
\end{proof}

\begin{thm}
The Cherednik-Orr conjecture is true for $\lambda^*=m_1 \om_1+m_2\om_{n-1}$
\end{thm}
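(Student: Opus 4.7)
The plan is to follow the same template as the proof of Theorem \ref{rectangular}: express $E_\lambda(x,q^{-1},\infty)$ as a generating function of $c^\lambda$-coefficients, compute these coefficients explicitly, and match them against the closed form for $\ch_{PBW} W_\lambda^{gr}(x,q,q)$ already provided by Proposition \ref{characterfor1n}. The diagram in question has shape $(0, m_2, \ldots, m_2, m_1+m_2)$, so $s=n-1$ cells sit in the bottom row. Applying Proposition \ref{interpretationscrollledc} with $r=0$ yields
\[
E_\lambda(x, q^{-1}, \infty) = \sum_{k_1,\dots,k_n\ge 0} c^\lambda_{(2,3,\dots,n)}(k_1, k_2+1, \dots, k_n+1)\, x_1^{k_1}\cdots x_n^{k_n}.
\]
Because $\lambda_2 = \cdots = \lambda_{n-1}$, Proposition \ref{transpositioncolumns} lets us replace the row string by the essential data $(\tilde a, a_{n-1})$, so this $c^\lambda$ is exactly the $c^\lambda_{1|n}$ introduced just before Proposition \ref{characterfor1n}.

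Next, I would compute $c^\lambda_{1|n}(k_1, k_2+1, \dots, k_n+1)$ by reading off an appropriate filling row by row. The $m_2$ wide rows (columns $2,\dots,n$) and the $m_1$ narrow top rows (column $n$ only) can be analyzed separately: a wide row is specified by a subset of $\{1,\dots,n\}$ of size $n-1$, i.e. by the missing element, together with the distinguished entry in column $n$, while a narrow row is a single element of $\{1,\dots,n\}$. The degree $Q(\sigma)=-\sum_{\sigma(u)<\sigma(d(u))}(l(u)+1)$ decomposes into contributions from consecutive-row descents, with $l(u)+1$ equal to $m_2-j+1$ in the wide columns and $m_1+m_2-j+1$ in column $n$. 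Summing over the allowed orderings of rows of each type recovers the $q$-multinomial coefficients $\binom{m_2}{l_1,\dots,l_n}_q$ and $\binom{m_1}{p_1,\dots,p_n}_q$, where $l_i$ counts the wide rows whose missing element is $i$ and $p_i$ counts the narrow rows labelled by $i$. The interaction between narrow rows labelled $1$ and wide rows missing element $1$ is forced by Proposition \ref{ruleoffilling} and produces the cross term $q^{l_1 p_1}$, while the shift $k_i \to k_i+1$ coming from $c^\lambda_{1|n}(k_1,k_2+1,\dots,k_n+1)$ (as opposed to $c^\lambda_{n|1}(k_1+1,\dots,k_{n-1}+1,k_n)$ that governs $\ch_{KM}$ by Proposition \ref{specialisationinzero}) contributes the extra factor $q^{l_2+\cdots+l_n + p_1+\cdots+p_{n-1}}$, yielding precisely the formula of Proposition \ref{characterfor1n} for $\ch_{PBW} W_\lambda^{gr}(x,q,q)$.

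As an alternative to the direct combinatorial computation, one can relate $c^\lambda_{1|n}$ to $c^\lambda_{n|1}$ (which gives $\ch_{KM}$ via Proposition \ref{specialisationinzero}) by applying Lemma \ref{equalityofc} iteratively: the Knop-Sahi recurrence \eqref{psionMacdonald} accumulates explicit $q$-shifts that must add up to the discrepancy $q^{l_2+\cdots+l_n + p_1+\cdots+p_{n-1}}$ observed between the $\ch_{KM}$ and $\ch_{PBW}$ expressions. This route is more algebraic but requires tracking the shifts through the cyclic rotation of the bottom-row strings.

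The main obstacle is the combinatorial bookkeeping of how descents, arms, and the forced filling rules from Proposition \ref{ruleoffilling} conspire to produce exactly the $q$-multinomial coefficients and the cross term $q^{l_1 p_1}$, rather than a more complicated $q$-sum. The cleanest strategy is likely to construct a bijection, in the spirit of the map $\eta$ used in the proof of Proposition \ref{characterfor1n}, between appropriate fillings of $dg'(\lambda)$ and pairs of words $(a_1,\dots,a_{m_2})\,|\,(b_1,\dots,b_{m_1})$ in the alphabet $\{1,\dots,n\}$, and to verify that the statistic $Q(\sigma)$ on fillings matches the statistic $d(\tau)$ on words (up to the shift mentioned above); once this bijection is in place the identification of $E_\lambda(x,q^{-1},\infty)$ with $\ch_{PBW} W_\lambda^{gr}(x;q,q)$ is immediate.
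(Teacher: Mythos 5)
Your setup is right: you correctly reduce the statement to the identity $E_\lambda(x,q^{-1},\infty)=\sum c^\lambda_{1|n}(k_1,k_2+1,\dots,k_n+1)x^{\mathbf k}$ via Proposition \ref{interpretationscrollledc} and Proposition \ref{transpositioncolumns}, and you correctly identify that the whole theorem comes down to evaluating $c^\lambda_{1|n}$ and matching it against the second formula of Proposition \ref{characterfor1n}. But the central step — the actual evaluation of $c^\lambda_{1|n}$ — is asserted rather than proved. You state that summing $q^{Q(\sigma)}$ over appropriate fillings with fixed row content ``recovers the $q$-multinomial coefficients,'' that the interaction of the two row types ``produces the cross term $q^{l_1p_1}$,'' and that the argument shift $k_i\to k_i+1$ ``contributes the extra factor $q^{l_2+\cdots+l_n+p_1+\cdots+p_{n-1}}$.'' None of these claims is derived, and the last one is not even a correct causal account: the arguments of $c^\lambda_{1|n}$ versus $c^\lambda_{n|1}$ merely record which variables the (different) lowest rows contribute to; the discrepancy in the $q$-power comes from genuinely different descent patterns propagating up the diagram from two different bottom rows, which is exactly the hard combinatorial content. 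Reverse-engineering the expected answer from Proposition \ref{characterfor1n} and declaring that the fillings must produce it is the gap.

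The paper closes this gap by an argument you do not have. It anchors at $c^\lambda_{n|1}$, which is known explicitly because Proposition \ref{specialisationinzero} identifies it with the Kac--Moody character and Proposition \ref{characterfor1n} (via the PBW basis of Theorem \ref{main}) computes that character in closed form. It then applies Proposition \ref{mostgeneralrecurrence} to each $c_{j|1}$, subtracts the equations for consecutive $j$ to obtain the first-order relation
\[
c^{(0,m_2+1,\dots,m_2+1,m_1+m_2+1)}_{j|1}=c^{(0,m_2+1,\dots,m_2+1,m_1+m_2+1)}_{j+1|1}-(1-q^{m_2})\,c^{\lambda}_{j+1|1},
\]
and solves it by downward induction on $j$, picking up one factor $q^{l_j}$ at each step until it reaches $c_{2|1}$ with the exponent $l_1p_1+l_3+\cdots+l_n$. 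A single application of Lemma \ref{equalityofc} (not an iteration) then converts $c_{2|1}$ into $c_{1|n}$ and supplies the remaining factor $q^{p_2+\cdots+p_n+l_1}$ from the explicit prefactor $q^{m_1+m_2-k_n}$. Your ``alternative route'' of iterating Lemma \ref{equalityofc} would not by itself generate the intermediate factors $q^{l_{j+1}+\cdots+l_n}$; those come from the diagram-growing recurrence, which is the missing idea. To complete your proof you would either need to import this recurrence-and-subtraction argument, or genuinely carry out the direct bijective computation of $Q(\sigma)$ on appropriate fillings that you only sketch.
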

\begin{proof}
Using Proposition \ref{specialisationinzero} and Theorem \ref{main} we have:
\[c^{(0,m_2+1, \dots, m_2 +1, m_2+m_1+1)}_{n|1}(k_1+1, \dots, k_{n-1}+1, k_n)=\]
\[=\sum_{\substack{p_i+\sum_{\alpha \neq i}l_{\alpha}=k_i\\
l_1 +  \dots +l_n=m_2,\\ p_1 + \dots + p_n=m_1}}
q^{l_1p_1}\binom{m_2}{l_1, \dots, l_{n}}_q\binom{m_1}{p_1, \dots, p_{n}}_q.\]

Using Proposition \ref{mostgeneralrecurrence} we have for $j\ge 2$:

\[c^{(0,m_2+1, \dots, m_2 +1, m_2+m_1+1)}_{j|1}(k_1+1, \dots, k_{j-1}+1, k_j, k_{j+1}+1, \dots,  k_n+1)=\]
\[\sum_{i=2}^{j} c^{\lambda}_{i|1}(k_1, \dots k_n)+\sum_{i=j+1}^{n} c^{\lambda}_{i|1}(k_1, \dots k_n)q^{m_2}+
c^{\lambda}_{1|2}(k_1, \dots k_n).\]

Subtracting the equations for consequent $j$'s we obtain:

\begin{multline*}
c^{(0,m_2+1, \dots, m_2 +1, m_2+m_1+1)}_{j|1}(k_1+1, \dots, k_{j-1}+1, k_j, k_{j+1}+1, \dots,  k_n+1)=\\
c^{(0,m_2+1, \dots, m_2 +1, m_2+m_1+1)}_{j+1|1}(k_1+1, \dots, k_{j}+1, k_{j+1}, k_{j+2}+1, \dots,  k_n+1)-\\
(1-q^{m_2})c^{\lambda}_{j+1|1}(k_1, \dots, k_n).
\end{multline*}

We claim that
\begin{multline*}
c^{(0,m_2+1, \dots, m_2 +1, m_2+m_1+1)}_{j|1}(k_1+1, \dots, k_{j-1}+1, k_j, k_{j+1}+1, \dots,  k_n+1)=\\
\sum_{\substack{p_i+l_{i+1} + \dots + l_n +l_1 + \dots + l_{i-1}=k_i,\\ 
l_1+\dots +l_n=m_2,\\ p_1 + \dots + p_n=m_1}}
q^{l_1p_1+l_{j+1} + \dots +l_n}\binom{m_2}{l_1, \dots, l_{n}}_q\binom{m_1}{p_1, \dots, p_{n}}_q.
\end{multline*}

Indeed, we know that it is true for $j=n$ and
\begin{multline*}
\sum_{\substack{p_i+\sum_{\alpha \neq i}l_{\alpha}=k_i,\\ l_1 + \dots +l_n=m_2 \\ p_1 + \dots + p_n=m_1}}
q^{l_1p_1+l_{j+1} + \dots +l_n}\binom{m_2}{l_1, \dots, l_{n}}_q\binom{m_1}{p_1, \dots, p_{n}}_q -\\
(1-q^{m_2})\sum_{\substack{
p_i+\sum_{\alpha \neq i}l_{\alpha}=k_i-1, i \neq j,\\
p_j+\sum_{\alpha \neq j}l_{\alpha}=k_j,\\l_1+\dots +l_n=m_2-1,\\ p_1 + \dots + p_n=m_1}}
q^{l_1p_1+l_{j+1} + \dots +l_n}\binom{m_2}{l_1, \dots, l_{n}}_q\binom{m_1}{p_1, \dots, p_{n}}_q=\\
\sum_{\substack{p_i+\sum_{\alpha \neq i}l_{\alpha}=k_i,\\ l_1 + \dots +l_n=m_2,\\ p_1 + \dots + p_n=m_1}}
\Bigg( q^{l_1p_1+l_{j+1} + \dots +l_n}\binom{m_2}{l_1, \dots, l_{n}}_q\binom{m_1}{p_1, \dots, p_{n}}_q- \\
(1-q^{m_2})q^{l_1p_1+l_{j+1} + \dots +l_n}\binom{m_2-1}{l_1, \dots, l_{j-1},
 l_j-1, l_{j+1}, \dots, l_{n}}_q\binom{m_1}{p_1, \dots, p_{n}}_q\Bigg)=\\
\sum_{\substack{p_i+\sum_{\alpha \neq i}l_{\alpha}=k_i,\\ l_1+\dots +l_n=m_2,\\ p_1 + \dots + p_n=m_1}}
q^{l_1p_1+l_{j+1} + \dots +l_n+l_j}\binom{m_2}{l_1, \dots, l_{n}}_q\binom{m_1}{p_1, \dots, p_{n}}_q
\end{multline*}
 In particular we obtain:
\begin{multline*}
c^{(0,m_2+1, \dots, m_2 +1, m_2+m_1+1)}_{2|1}=\\
\sum_{\substack{p_i+\sum_{\alpha \neq i}l_{\alpha}=k_i,\\ l_1 + \dots +l_n=m_2,\\ p_1 + \dots + p_n=m_1}}
q^{l_1p_1+l_{3} + \dots +l_n}\binom{m_2}{l_1, \dots, l_{n}}_q\binom{m_1}{p_1, \dots, p_{n}}_q. 
\end{multline*}

But using Lemma \ref{equalityofc} we have:
\begin{multline*}
c^{(0,m_2+1, \dots, m_2 +1, m_2+m_1+1)}_{1|n}
(k_1, k_2+1,\dots,  k_n+ 1)=\\
q^{m_2+m_1 -k_n} c^{(0,m_2+1, \dots, m_2 +1, m_2+m_1+1)}_{2|1}
(k_{n}+1, k_1, k_2+1, \dots, k_{n-1}+1)=\\
\sum_{\substack{p_i+\sum_{\alpha \neq i}l_{\alpha}=k_{i-1(mod n)},\\ l_1+\dots +l_n=m_2,\\ p_1 + \dots + p_n=m_1}}
q^{l_1p_1+l_{3} + \dots +l_n+p_2+\dots + p_n +l_1}\binom{m_2}{l_1, \dots, l_{n}}_q\binom{m_1}{p_1, \dots, p_{n}}_q.\
\end{multline*}
We thus obtain exactly the coefficient of $x_1^{k_1} \dots x_n^{k_n}$
in the formula for PBW-character of Proposition \ref{characterfor1n}.
\end{proof}

\section*{Acknowledgments}
The work of Evgeny Feigin was partially supported
by the Dynasty Foundation, by the Simons foundation, by the AG Laboratory HSE, 
RF government grant ag. 11.G34.31.0023, by the RFBR grants
12-01-00070, 12-01-00944, 12-01-33101, 13-01-12401 and by the Russian Ministry of Education and Science under the
grant 2012-1.1-12-000-1011-016.

The work of  Ievgen Makedonskyi was partially supported
by the AG Laboratory HSE, RF government grant ag. 11.G34.31.0023.

\end{document}